\numberwithin{equation}{section}
\newtheorem{theorem}{Theorem}[section]
\newtheorem{corollary}[theorem]{Corollary}
\newtheorem{lemma}[theorem]{Lemma}
\newtheorem{lemma-definition}[theorem]{Lemma-Definition}
\theoremstyle{definition}
\newtheorem{remark}[theorem]{Remark}
\newtheorem{example}[theorem]{Example}
\newtheorem{definition}[theorem]{Definition}
\newcommand{\vv}{{\bf v}}
\newcommand{\minvecr}{\overrightarrow{\textup{min}}(P_1,\ldots,P_r)}
\newcommand{\minvec}{\overrightarrow{\textup{min}}}
\newcommand{\overunder}[2]{ 
\!\begin{array}{c} 
\scriptstyle{#1}\\[-.1in] 
-\!\!\!-\!\!\!-\\[-.1in] 
\scriptstyle{#2} 
\end{array} 
\! 
} 
\title{Newton polytopes of rank 3 cluster variables}
\author{Kyungyong Lee}
\address{Department of Mathematics, 
University of Alabama, Tuscaloosa, AL 35487,
USA, and School of Mathematics, Korea Institute for Advanced Study, Seoul 02455 Republic of Korea}
\email{kyungyong.lee@ua.edu; klee1@kias.re.kr}
\author{Li Li}
\address{Department of Mathematics
and Statistics,
Oakland University, 
Rochester, MI 48309-4479, USA 
}
\email{li2345@oakland.edu}
\author{Ralf Schiffler}
\address{Department of Mathematics, University of Connecticut, 
Storrs, CT 06269-3009, USA}
\email{schiffler@math.uconn.edu}
\thanks{The first author was supported by the University of Alabama, University of Nebraska--Lincoln, Korea Institute for Advanced Study, and the NSF grant DMS 1800207. The second author was supported by Oakland University, and by the NSA grant H98230-16-1-0303. The third author was supported by the NSF-CAREER grant  DMS-1254567, the NSF grant DMS-1800860 and by the University of Connecticut.}
\keywords{Newton polytopes, cluster variables}
\subjclass{13F60, 52B20}
\begin{document}
\begin{abstract}
We characterize the cluster variables of skew-symmetrizable cluster algebras of rank 3 by their Newton polytopes. The Newton polytope of the cluster variable $z$ is the convex hull of the set of all $\mathbf{p}\in\mathbb{Z}^3$ such that the Laurent monomial ${\bf x}^{\mathbf{p}}$ appears with nonzero coefficient in the Laurent expansion of $z$ in the cluster ${\bf x}$. 
We give an explicit construction of the Newton polytope in terms of the exchange matrix and the denominator vector of the cluster variable.

Along the way, we give a new proof  of the fact that denominator vectors of non-initial cluster variables are non-negative in a cluster algebra of arbitrary rank. 
\end{abstract}

\maketitle

\section{Introduction}
Cluster algebras were discovered by Fomin and Zelevinsky in 2001. Since then, it has been shown that they are  related to diverse areas of mathematics such as algebraic geometry, total positivity, quiver representations, string theory, statistical physics models, non-commutative geometry, Teichm\"uller theory,  tropical geometry, KP solitons, discrete integrable systems, quantum mechanics, Lie theory, algebraic combinatorics, WKB analysis, knot theory, number theory, symplectic geometry, and Poisson geometry. 

A cluster algebra is equipped with a set of distinguished generators called cluster variables. These generators are very far from being fully understood. 
Explicit combinatorial formulas that are manifestly positive are known for cluster variables in cluster algebras from surfaces \cite{MSW} and for cluster algebras of rank 2 \cite{LS-rank2}. For skew-symmetric cluster algebras, there is the cluster character formula for the cluster variables \cite{P} as well as the $F$-polynomial formula \cite{DWZ2} and for skew-symmetrizable cluster algebras there is the scattering diagram approach \cite{GHKK}, but  none of these  provide computable formulas.

For a general  cluster algebra, we do know that cluster variables satisfy the Laurent phenomenon \cite{fz-ClusterI} and positivity \cite{LS, GHKK}, namely, every cluster variable $z$ can be written as 
$$
z=\sum_{{\bf p}\in \mathbb{Z}^n} e({\bf p})x^{\bf p},
$$where $e({\bf p})\ge 0$ for all $\mathbf{p}$, and $e({\bf p})> 0$ for finitely many $\mathbf{p}$.

A natural questions is how to describe the set $S(z):=\{\mathbf{p} \ : \  e({\bf p})> 0 \}$. However this can be very hard in general (see Remark~\ref{remark of cor}). More feasible questions would be the following.

\bigskip

\noindent (a) Describe the Newton polytope of $z$ (which is the convex hull of $S(z)$ by definition).

\bigskip

\noindent (b) Find a subset $U(z)\subset \mathbb{R}^n$ such that the condition $S(z)\subset U(z)$ uniquely detects the cluster variable $z$ (up to a scalar) among all elements in the cluster algebra.

\bigskip

These problems have been solved in  \cite{LLZ} for the rank 2 case. In fact, it turns out that the Newton polytope is a solution to (b) . The paper \cite{LLZ} also introduced a so-called greedy basis, which includes all cluster variables, and found a certain support condition that uniquely detects each greedy basis element (up to a scalar) among all elements in the cluster algebra. 
An alternative characterization of greedy elements using a support condition (SC) plays an essential role in the construction of quantum greedy bases of rank 2 cluster algebras \cite{LLRZ}.  
This support condition  was a key ingredient in \cite{CGMMRSW}, where it was shown that, in rank 2,  the greedy basis coincides with the theta basis defined in \cite{GHKK}.

In this paper, we consider the rank 3 case.  We solve problems (a) and (b), and  prove that the Newton polytope of $z$ is a solution to (b). We also generalize the result to quantum cluster variables.
The step from rank 2 to rank 3 is known to be difficult, since one has to  add the dynamics of the exchange matrix to the problem. In rank 2, the mutation is trivial on the level of the exchange matrix. In rank 3 however, except for a few small cases, 
the mutation class of the matrix is infinite and the representation theory of the quiver is wild.

Along the way, we give a new elementary proof  of the fact that denominator vectors of non-initial cluster variables are non-negative in a cluster algebra of arbitrary rank. This was conjectured by Fomin and Zelevinsky in \cite{ClusteralgebraIV} and recently proved by Cao and Li in \cite{CL} using the positivity theorem.

Recently, Fei has studied combinatorics of $F$-polynomials using a representation-theoretic approach \cite{Fei}. In that paper it was shown that the $F$-polynomial of every cluster variable of an acyclic skew-symmetric cluster algebra has saturated support, which means that all lattice points in the Newton polytope of the $F$-polynomial are in the support of the $F$-polynomial. Briefly speaking, in the case of skew-symmetric rank 3 cluster algebras, Fei's result is related to our work in the following sense.  The Newton polytope of a cluster variable (which lies in a plane inside $\mathbb{R}^3$) is a projection of the Newton polytope of the corresponding $F$-polynomial (which is usually $3$-dimensional) under a linear map. The supports of $F$-polynomials are expected to be saturated but cluster variables are not saturated in general.  On the other hand, the Newton polytopes of the $F$-polynomials are difficult to determine but the Newton polytopes of the cluster variables can be explicitly determined. Please see Corollary \ref{cor of main thm}, Remark \ref{remark of cor} and Remark \ref{remark:F-poly} for more details.

The paper is organized as follows. In Section~\ref{sect 1+}, we briefly review the solutions for (a) and (b) for  rank 2 cluster algebras and in Section \ref{sect 2}, we explain our notation and recall several basic facts about cluster algebras. We prove the non-negativity of denominator vectors  in Section \ref{sect den}. Our main theorem is presented in Section \ref{sect 3} and proved in Section \ref{sect 3+}. We then give an example in Section~\ref{sect 4}. Finally, in Section~\ref{sect 5} we prove a quantum analogue of the main theorem.

\section{Rank 2}\label{sect 1+} In this section, we let $B$ be a $2\times 2$ skew-symmetrizable matrix and $\mathcal{A}(B)$ the corresponding cluster algebra with principal coefficients.
\subsection{Greedy basis}
It is proved in \cite{LLZ} that for each rank 2 cluster algebra there exists a so-called greedy basis defined as follows. Let $B=\left[\begin{smallmatrix} 0&b\\c&0\end{smallmatrix}\right]$ denote the exchange matrix. Then for $(a_1,a_2)\in\mathbb{Z}^2$, define $c(p,q)$ for $(p,q) \in \mathbb{Z}_{\geq 0}^2$ recursively by
$c(0,0)=1$,
{\footnotesize
$$
\aligned
&c(p,q)= \max&\left( {\sum_{k=1}^p (-1)^{k-1}
c(p-k,q) \binom{a_2\!-\!cq\!+\!k\!-\!1}{k}}, \right.
&\left. \; {\sum_{k=1}^q
(-1)^{k-1} c(p,q-k) \binom{a_1\!-\!bp\!+\!k\!-\!1}{k}}\right)\\
\endaligned
$$
}
and define the greedy element at $(a_1,a_2)$ as
$$x[a_1,a_2]=\displaystyle\frac{\sum c(p,q)x_1^{bp}x_2^{cq}}{x_1^{a_1}x_2^{a_2}}.$$

Recall that an element of $\mathcal{A}(B)$ is called {\em positive} if its Laurent expansion is positive in every seed. A positive element is \emph{indecomposable} if it cannot be written as a sum of two positive elements. Finally, a basis $\mathcal{B} $ is called \emph{strongly positive} if any product of elements from $\mathcal{B} $ can be expanded as a positive linear combination of elements of $\mathcal{B} $.
\begin{theorem}
 \cite{LLZ}
 The set  $\mathcal{B}=\{x[a_1,a_2]\mid (a_1,a_2)\in \mathbb{Z}^2\}$ is a strongly positive basis for the cluster algebra $\mathcal{A}(B)$. Moreover $\mathcal{B} $ contains all cluster monomials and all elements of $\mathcal{B}$ are indecomposable positive elements of $\mathcal{A}(B)$. $\mathcal{B}$ is called the \emph{greedy basis}. 
\end{theorem}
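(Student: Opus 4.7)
The plan is to extract from the recursion for $c(p,q)$ a \emph{support condition} (SC) that characterizes $x[a_1,a_2]$ uniquely among pointed Laurent polynomials, and then to deduce each of the three claims of the theorem from that characterization.

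\emph{Well-definedness and the SC.} First I would verify that the recursion terminates and produces only finitely many nonzero $c(p,q)$ (both families of binomial coefficients eventually vanish for large $p$ or $q$), so that $x[a_1,a_2]$ is a well-defined Laurent polynomial in $x_1,x_2$. The key reformulation of the recursion is that $c(p,q)=\max(\Sigma_1,\Sigma_2)$ is equivalent to the conjunction $c(p,q)\geq \Sigma_1$, $c(p,q)\geq \Sigma_2$, together with the requirement that at least one of these inequalities is an equality. Viewed this way, the recursion becomes a condition on an otherwise free pointed Laurent polynomial of the shape $x_1^{-a_1}x_2^{-a_2}\sum c(p,q)x_1^{bp}x_2^{cq}$ with $c(0,0)=1$, and one can show that $x[a_1,a_2]$ is the unique such polynomial whose support is minimal in the natural sense.

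\emph{Cluster monomials, positivity, and basis.} Next I would check that every cluster monomial of $\mathcal{A}(B)$ is pointed at its denominator vector and satisfies the SC. By induction along a sequence of mutations using the rank-2 exchange recursion, this reduces to a combinatorial identity on binomial coefficients, showing that all cluster monomials lie in $\mathcal{B}$. The ``max'' form of the recursion gives positivity of $x[a_1,a_2]$ in the initial cluster by induction on $p+q$; because the SC is preserved under the two mutations (its two clauses essentially swap under mutation), positivity extends to every seed, which also shows $x[a_1,a_2]\in\mathcal{A}(B)$. Linear independence of $\mathcal{B}$ is immediate from the fact that the leading Laurent monomials $x_1^{-a_1}x_2^{-a_2}$ are pairwise distinct, and spanning follows by inductively peeling off leading terms. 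Indecomposability drops out of the minimality clause of the SC: any positive decomposition $x[a_1,a_2]=f+g$ would, after rescaling, produce pointed positive elements at $(a_1,a_2)$ with strictly smaller support, contradicting uniqueness.

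\emph{Strong positivity.} The main obstacle is strong positivity: showing that a product $x[a_1,a_2]\cdot x[a_1',a_2']$ expands as a nonnegative integer linear combination of greedy basis elements. The natural approach is an induction in which one reads off from the leading Laurent monomial of the product the unique greedy element $x[a_1+a_1',a_2+a_2']$ that must appear with that same leading monomial, subtracts a suitable scalar multiple of it, and proves that the remainder is again a positive element of $\mathcal{A}(B)$. By the basis property and the inductive hypothesis, the remainder then automatically expands with nonnegative coefficients in $\mathcal{B}$. Establishing positivity of the remainder requires sharp combinatorial estimates comparing the coefficients produced by the $c(p,q)$ recursion at three different pointed degrees, and this is the heart of the technical work in \cite{LLZ}.
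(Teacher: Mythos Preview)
The paper does not prove this theorem; it is quoted from \cite{LLZ}, and the only remark the paper adds is that the \emph{strong positivity} assertion follows from \cite{CGMMRSW}, where the greedy basis is identified with the theta basis of \cite{GHKK}. So there is no ``paper's own proof'' to match here beyond that one-line attribution.

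Your outline for well-definedness, the support-condition characterization, containment of cluster monomials, positivity in every seed, linear independence, spanning, and indecomposability is broadly in the spirit of \cite{LLZ} and is a reasonable sketch of that part of the argument. The substantive discrepancy is in your treatment of strong positivity. You propose to subtract off the leading greedy element from a product and argue inductively that the remainder is still a positive element, via ``sharp combinatorial estimates comparing the coefficients produced by the $c(p,q)$ recursion at three different pointed degrees,'' and you attribute this to \cite{LLZ}. That is not how strong positivity was established, and the paper says so explicitly: strong positivity is obtained only after the fact, by showing in \cite{CGMMRSW} that the greedy basis coincides with the theta basis, and then importing strong positivity of theta functions from \cite{GHKK} (via the scattering-diagram/broken-line machinery). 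No direct combinatorial proof along the lines you describe appears in \cite{LLZ}, and making such an argument rigorous is exactly the obstacle that the detour through theta functions circumvents. So your final paragraph both misattributes the argument and underestimates the difficulty: the ``remainder is positive'' step is not known to follow from the greedy recursion alone.
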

Here the fact that $\mathcal{B}$ is strongly positive follows from \cite{CGMMRSW}, where it is shown that the greedy basis coincides with the theta function basis defined in \cite{GHKK}.

\subsection{Characterization using support conditions} The following alternative characterization of greedy elements using a support condition (SC) plays an essential role in the construction of  greedy bases of rank 2 quantum cluster algebras \cite{LLRZ}.  

\begin{theorem}
 The coefficients $c(p,q)$ of $x[a_1,a_2]$ are determined by:

\begin{itemize}
\item[(NC)] (Normalization condition) $c(0,0)=1$.
\smallskip

\item[(DC)] (Divisibility condition) 
\smallskip

if $a_2>cq$, then $(1+x)^{a_2-cq}|\sum_i c(i,q)x^i$.
\smallskip

if $a_1>bp$, then $(1+x)^{a_1-bp}|\sum_i c(p,i)x^i$.
\medskip

\item[(SC)] (Support condition) {$c(p,q)=0$ outside the  region given in \cite[Figure 1]{LLRZ}}.
\end{itemize}

Moreover, if $x[a_1,a_2]$ is a cluster variable then condition (SC) becomes $c(p,q)=0$ outside the closed triangle with vertices $(0,0), (a_2,0),(0,a_1)$, as shown in Figure \ref{fig:rank2}. 

\end{theorem}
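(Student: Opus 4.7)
The plan is to establish both that the greedy coefficients $c(p,q)$ satisfy (NC), (DC), (SC), and that these three conditions determine them uniquely. (NC) is the base case of the max recursion. For (DC), the crucial rearrangement is that whenever case A of the recursion achieves the maximum at $(p,q)$, the identity $c(p,q) = A$ is equivalent to
\begin{equation*}
\sum_{k=0}^{p} (-1)^k\, c(p-k,q) \binom{a_2-cq+k-1}{k} = 0,
\end{equation*}
which is precisely the vanishing of the coefficient of $x^p$ in the formal series $f_q(x)\,(1+x)^{-(a_2-cq)}$, where $f_q(x) := \sum_i c(i,q)\, x^i$. Consequently, if case A is active throughout the relevant range of $p$ when $a_2 > cq$, then $(1+x)^{a_2-cq}$ divides $f_q(x)$; the symmetric divisibility follows by interchanging the roles of the two branches. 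That case A dominates in the correct range is the content of the combinatorial analysis carried out in \cite{LLZ}. The support description in (SC) is then imported from \cite[Figure 1]{LLRZ}; in the cluster variable case it collapses to the stated triangle since the greedy element and the cluster variable coincide there, and the Newton polygon of a rank 2 cluster variable is classically known.

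For uniqueness, let $c$ and $c'$ satisfy all three conditions, set $d := c - c'$ and $f_q^d(x) := \sum_i d(i,q)\, x^i$, so that $d(0,0) = 0$ while (DC) and (SC) still hold for $d$. In the cluster variable case the argument is cleanest: at $q=0$ the polynomial $f_0^d$ has degree at most $a_2$ by (SC) and is divisible by $(1+x)^{a_2}$ by (DC), hence $f_0^d = \lambda\,(1+x)^{a_2}$, and the normalization $d(0,0)=0$ forces $\lambda = 0$; a symmetric argument on columns yields $d(0,\cdot) \equiv 0$. One then inducts on $p+q$: at each new lattice point $(p_0,q_0)$ in the triangle, the row and column divisibility conditions, combined with the vanishing already established on smaller indices and the support bound, cut out a linear system whose only solution is $d(p_0,q_0)=0$.

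The main obstacle is verifying (DC) in the forward direction, i.e., showing that case A of the max recursion actually attains the maximum throughout the range where $a_2 > cq$; this requires a delicate monotonicity analysis of the two sums entering the recursion and is the hard combinatorial core inherited from \cite{LLZ}. For the general greedy basis case (beyond cluster variables), both the more intricate support region of \cite[Figure 1]{LLRZ} and its compatibility with the two divisibility conditions must be handled simultaneously, which makes the uniqueness induction technically more demanding than the clean triangular case sketched above.
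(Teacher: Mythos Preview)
The paper states this theorem as a result from \cite{LLRZ} without proof, so there is no in-paper argument to compare yours against; it is still useful to measure your sketch against the method the paper uses for the rank~3 analogue. Your identification of branch~A of the max-recursion with the vanishing of the $x^p$-coefficient in $f_q(x)(1+x)^{-(a_2-cq)}$ is correct and is exactly the algebraic bridge between the greedy definition and (DC); deferring the combinatorial core (that branch~A actually attains the maximum in the needed range) to \cite{LLZ} is reasonable for a background result.

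The genuine gap is in your uniqueness argument. Your treatment of row $q=0$ and column $p=0$ is fine, but the induction on $p+q$ for interior points does not work as stated: the row divisibility at level $q_0$ constrains the whole polynomial $\sum_p d(p,q_0)x^p$, so it couples $d(p_0,q_0)$ to every $d(p,q_0)$ out to the support bound, including those with $p+q_0>p_0+q_0$ which your induction has not yet reached. The column condition has the same defect, so at a generic interior point the linear system you invoke is underdetermined at that inductive stage and you cannot conclude $d(p_0,q_0)=0$. The argument that does go through---and is precisely what the paper carries out in rank~3---is global rather than pointwise: (DC) forces the element into the upper cluster algebra (Laurent in every adjacent seed, hence in every seed by \cite{ClusteralgebraIII}); one then tracks how the support region transforms under mutation and mutates to a seed containing $x[a_1,a_2]$, where (SC) collapses to a single lattice point, so the element is a scalar multiple of the cluster variable and (NC) fixes the scalar.
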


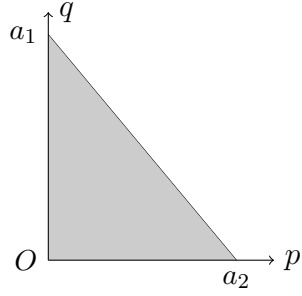
\begin{figure}
\begin{tikzpicture}
\usetikzlibrary{patterns}
\draw (0,3)--(2.5,0);
\fill [black!20]  (0,3)--(2.5,0)--(0,0)--(0,3);
\draw (0,0) node[anchor=east] {\small$O$};
\draw (2.5,0) node[anchor=north] {\small$a_2$};
\draw (0,3) node[anchor=east] {\small$a_1$};
\draw[->] (0,0) -- (3,0)
node[right] {$p$};
\draw[->] (0,0) -- (0,3.3)
node[right] {$q$};
\end{tikzpicture}
\caption{Support of a cluster variable $x[a_1,a_2]$ of a rank 2 cluster algebra}\label{fig:rank2}
\end{figure}

The main theorem of this paper, Theorem \ref{main theorem}, gives a similar characterization for cluster variables for every rank 3 cluster algebra.

\section{Preparation}\label{sect 2}

\subsection{Definition, notations, and facts in cluster algebras}
We recall the definition of skew-symmetrizable cluster algebras with principal coefficients.  

A square matrix $B$ is called skew-symmetrizable if there exists a positive integer diagonal matrix $D$ such that $DB$ is skew-symmetric.  

Let $n$ be a positive integer. 
Let $\mathbb{T}$ denote the $n$-regular tree whose edges are labeled by integers in $\{1,\dots,n\}$ so that each vertex is incident on $n$ edges with distinct labels. The notation $t \overunder{k}{} t'$ means that the edge joining $t$ and $t'$ is labeled by $k$.

Denote by $\mathcal{F}$ the field of rational functions $\mathbb{Q}(x_1,\dots,x_{2n})$. 
To distinguish between mutable variables $x_1,\dots,x_n$ and coefficient variables $x_{n+1},\dots,x_{2n}$, we also use the notation $y_i=x_{n+i}$, for $i=1,\dots,n$. For ${\bf p}=(p_1,\dots,p_n)\in\mathbb{Z}^n$, let $x^{\bf p}=x_1^{p_1}\cdots x_n^{p_n}$, $y^{\bf p}=y_1^{p_1}\cdots y_n^{p_n}$. For $\tilde{\bf p}=(p_1,\dots,p_{2n})\in\mathbb{Z}^{2n}$, let $x^{\tilde{\bf p}}=x_1^{p_1}\cdots x_{2n}^{p_{2n}}$.

Each vertex $t\in \mathbb{T}$ is decorated with a seed 
$\Sigma_t=(\mathbf{x}(t),\tilde{B}(t))$ 
where:
\begin{itemize}
\item $\tilde{B}(t)=[b_{ij}^{(t)}]$ is a $2n\times n$ integer matrix such that the submatrix $B(t)$ formed by the top $n$ rows  of $\tilde{B}(t)$ is skew-symmetrizable. 
\item $\mathbf{x}(t)=\{x_1(t),\dots,x_n(t)\}$ is an $n$-tuple of elements of $\mathcal{F}$.
\end{itemize}

The seeds are defined recursively by mutation as follows. Fix an initial vertex $t_0\in\mathbb{T}$ and define the initial seed as $\mathbf{x}(t_0)=\{x_1,\dots,x_n\}$, 
$\tilde{B}=\begin{bmatrix}B\\ I\end{bmatrix}$ where $B$ is a $n\times n$ skew-symmetrizable matrix and $I$ is the $n\times n$ identify matrix. 

For any real number $a$, let $[a]_+:=\max(a,0)$. Given a seed 
$\Sigma_t=(\mathbf{x}(t),\tilde{B}(t))$ 
and an edge  $t \overunder{k}{} t'$, we define the mutation of $\Sigma_t$ to be 
$\mu_k(\Sigma_{t})=\Sigma_{t'}=(\mathbf{x}(t'),\tilde{B}(t'))$, 
where
$$\begin{array}{rcl}
b^{(t')}_{ij}&=&\left\{\begin{array}{ll}
-b^{(t)}_{ij} &\textrm{ if $i=k$ or $j=k$,}\\[5pt]
b^{(t)}_{ij}+{\rm sgn}(b^{(t)}_{ik})[b^{(t)}_{ik}b^{(t)}_{kj}]_+ &\textrm{ otherwise.}\\
\end{array}\right.\\ \\
x_i(t')&=&\left\{\begin{array}{ll}
x_k(t)^{-1}\Big(\displaystyle\prod_{j=1}^{2n}x_j(t)^{[b^{(t)}_{jk}]_+}+\prod_{j=1}^{2n}x_j(t)^{[-b^{(t)}_{jk}]_+}\Big) &\textrm{ if $i=k$ ,}\\
x_i(t) &\textrm{ otherwise.}\\ 
\end{array}\right.\\\\
\end{array}
$$
Each $x_i(t)$ is called a cluster variable. The cluster algebra $\mathcal{A}$ is the $\mathbb{Q}[x_{n+1}^{\pm},\dots,x_{2n}^{\pm}]$-subalgebra of $\mathcal{F}$ generated by all cluster variables.

For each seed $\Sigma_t$, let $C(t)$ be the $n\times n$ submatrix of $\tilde{B}(t)$ formed by the bottom $n$ rows of $\tilde{B}(t)$. Its columns, ${\bf c}_1(t),\dots,{\bf c}_n(t)$, are called $c$-vectors. 
We need the following theorem proved by Gross-Hacking-Keel-Kontsevich in \cite[Corollary 5.5]{GHKK}.

\begin{theorem}(Sign-coherence of $c$-vectors)\cite{GHKK}\label{Sign-coherence of $c$-vectors}
In a skew-symmetrizable cluster algebra, every $c$-vector ${\bf c}^{(t)}_k=[b^{(t)}_{ik}]_{i=n+1}^{2n}$ is in $\mathbb{Z}_{\ge0}^n\cup\mathbb{Z}_{\le0}^n$. 
\end{theorem}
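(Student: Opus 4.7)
My strategy follows the \emph{consistent scattering diagram} approach of Gross--Hacking--Keel--Kontsevich. Fix a positive integer diagonal matrix $D=\mathrm{diag}(d_1,\ldots,d_n)$ with $DB$ skew-symmetric, and work in the lattice $M=\mathbb{Z}^n$ equipped with the bilinear form $\{e_i,e_j\}=b_{ij}d_j$. I would construct the cluster scattering diagram $\mathfrak{D}_{\mathfrak{s}}$ in $M_\mathbb{R}=M\otimes\mathbb{R}$ inductively, starting from the $n$ incoming walls $(e_i^{\perp},\,1+z^{e_i})$, $i=1,\ldots,n$. Existence and uniqueness (up to equivalence) of a consistent completion---one in which the path-ordered product of wall-crossing automorphisms around any closed loop is the identity---is supplied by the general Kontsevich--Soibelman/GHKK theorem on scattering diagrams.

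With $\mathfrak{D}_{\mathfrak{s}}$ in hand, the next step is to identify the seeds of $\mathcal{A}$ reachable from $\Sigma_{t_0}$ with the so-called \emph{cluster chambers}, i.e.\ the chambers obtained from the distinguished positive chamber $\mathcal{C}^+$ by crossing finitely many walls. The key assertion is that crossing a single facet of a cluster chamber $\mathcal{C}_t$ corresponds to the mutation $\mu_k$ in a prescribed direction $k$, the facet lies on the hyperplane $(c_k^{(t)})^{\perp}$, and its attached wall function has the form $1+z^{v_k^{(t)}}+\cdots$ for some primitive vector $v_k^{(t)}$ that is a positive rational multiple of $c_k^{(t)}$. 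I would prove this by induction on the mutation distance from $t_0$, applying the explicit wall-crossing automorphism $z^m\mapsto z^m(1+z^v)^{\langle n_0,m\rangle}$ and comparing with the mutation rule for the bottom half of $\tilde{B}(t)$ that defines the $c$-vectors.

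The decisive step---and the main obstacle---is to show that every wall function $f_d$ produced in the construction of $\mathfrak{D}_{\mathfrak{s}}$ is a (formal) polynomial in a single monomial $z^v$ with $v\in\mathbb{Z}_{\geq 0}^n\setminus\{0\}$ primitive and with positive integer coefficients. I would prove this by induction on the depth at which a wall is introduced in the recursive construction: the initial walls are manifestly of this form, and each new wall arises as a commutator of two previously existing wall-crossing automorphisms via the Baker--Campbell--Hausdorff formula. A careful bookkeeping shows that the primitive direction of any such commutator wall is a strictly positive integer combination of the primitive directions of its two parents, and therefore remains in the positive orthant $\mathbb{Z}_{\geq 0}^n$. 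Once this positivity is secured, sign-coherence is immediate: by the previous paragraph $c_k^{(t)}$ is a nonzero rational multiple of some $v\in\mathbb{Z}_{\geq 0}^n$, whose sign is determined by which side of the wall the chamber $\mathcal{C}_t$ sits on, so indeed $c_k^{(t)}\in\mathbb{Z}_{\geq 0}^n\cup\mathbb{Z}_{\leq 0}^n$.
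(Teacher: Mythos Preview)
The paper does not supply its own proof of this theorem: it is stated as a citation to \cite[Corollary~5.5]{GHKK} and then used as a black box (for instance in the proof of Lemma~\ref{F-polynomial 1}). So there is nothing to compare against beyond the original GHKK argument itself.

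Your proposal is a reasonable high-level sketch of that very argument. The three ingredients you isolate---construction of the consistent cluster scattering diagram $\mathfrak{D}_{\mathfrak{s}}$, identification of reachable seeds with cluster chambers whose facet normals are (up to sign) the $c$-vectors, and the fact that every wall direction lies in the positive orthant---are exactly the steps in GHKK. One caution: the sentence ``careful bookkeeping shows that the primitive direction of any such commutator wall is a strictly positive integer combination of the primitive directions of its two parents'' is where most of the actual content lies, and in GHKK this positivity is not a short bookkeeping exercise but rather the substance of their positivity theorem for the scattering diagram. If you intend this as a genuine proof rather than a summary, that step would need to be fleshed out considerably; as a roadmap pointing to the cited source, it is accurate.
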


We will need the following lemma. (As suggested by the referee, it follows immediately from a result of Nakanishi and Zelevinsky.)

\begin{lemma}\label{C-matrix det}
The determinant of $C(t)$ is $1$ or $-1$. As a consequence, the $c$-vectors are linearly independent, and all $c$-vectors are nonzero.
\end{lemma}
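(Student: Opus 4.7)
The plan is to prove the lemma by induction on the distance $d(t,t_0)$ in the tree $\mathbb{T}$, showing directly that each mutation multiplies $\det C(t)$ by $-1$. The base case is immediate from $C(t_0) = I$, so $\det C(t_0) = 1$. For the inductive step, suppose $t \overunder{k}{} t'$ and that $\det C(t) = \pm 1$ (so in particular $C(t)$ is invertible and every column is nonzero). The goal will be to exhibit an explicit matrix $M_k$ with $\det M_k = -1$ such that $C(t') = C(t)\,M_k$.

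To construct $M_k$, I will translate the mutation formula for $\tilde B(t)$ into a column operation on $C(t)$. For $i \in \{n+1,\dots,2n\}$ and $j \neq k$, the mutation rule gives
$$
c^{(t')}_{ij} = c^{(t)}_{ij} + \operatorname{sgn}\!\bigl(c^{(t)}_{ik}\bigr)\,\bigl[c^{(t)}_{ik} b^{(t)}_{kj}\bigr]_+ ,
$$
while the $k$-th column is simply negated. This is where sign-coherence (Theorem \ref{Sign-coherence of $c$-vectors}) is essential: by the inductive hypothesis, column $k$ of $C(t)$ is nonzero, and by sign-coherence there is a common sign $\epsilon_k \in \{+1,-1\}$ shared by all its nonzero entries. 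A short case analysis on $\epsilon_k$ shows that in either case
$$
\operatorname{sgn}\!\bigl(c^{(t)}_{ik}\bigr)\,\bigl[c^{(t)}_{ik} b^{(t)}_{kj}\bigr]_+ \;=\; c^{(t)}_{ik} \cdot \bigl[\epsilon_k b^{(t)}_{kj}\bigr]_+,
$$
so that column $j$ of $C(t')$ is column $j$ of $C(t)$ plus the nonnegative integer $[\epsilon_k b^{(t)}_{kj}]_+$ times column $k$ of $C(t)$, and column $k$ of $C(t')$ is the negative of column $k$ of $C(t)$.

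This description yields $C(t') = C(t)\, M_k$, where $M_k$ is the $n\times n$ matrix whose diagonal is $(1,\dots,1,-1,1,\dots,1)$ with the $-1$ in position $k$, whose $k$-th row has entries $[\epsilon_k b^{(t)}_{kj}]_+$ off the diagonal, and which is zero elsewhere. Since $M_k$ has only one nonzero off-diagonal row, $\det M_k$ is simply the product of its diagonal entries, namely $-1$. Therefore $\det C(t') = -\det C(t)$, closing the induction and giving $\det C(t) = (-1)^{d(t,t_0)} \in \{1,-1\}$. The two consequences are immediate: invertibility of $C(t)$ forces its columns, i.e.\ the $c$-vectors, to be linearly independent, and in particular each $c$-vector is nonzero.

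The only real subtlety is the inductive step, where one must verify that sign-coherence genuinely lets the piecewise-linear factor $\operatorname{sgn}(c^{(t)}_{ik})[c^{(t)}_{ik}b^{(t)}_{kj}]_+$ collapse into a single linear column operation with integer coefficient $[\epsilon_k b^{(t)}_{kj}]_+$; without sign-coherence different rows $i$ could contribute with different signs and $C(t') = C(t)M_k$ would fail. Once sign-coherence is invoked, the rest is bookkeeping.
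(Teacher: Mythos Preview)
Your proof is correct. The identity $\operatorname{sgn}(c^{(t)}_{ik})[c^{(t)}_{ik} b^{(t)}_{kj}]_+ = c^{(t)}_{ik}\,[\epsilon_k b^{(t)}_{kj}]_+$ holds in all cases (including $c^{(t)}_{ik}=0$), the matrix $M_k$ you describe has only the $(k,k)$ entry nonzero in its $k$-th column, and expanding along that column gives $\det M_k=-1$. The use of the inductive hypothesis to guarantee $\mathbf{c}^{(t)}_k\neq 0$, so that $\epsilon_k$ is well defined, is also fine.

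The paper's published proof takes a different and much shorter route: it simply quotes \cite[Theorem~1.2]{NZ}, which says that $C(t)$ is the inverse of another integer matrix (the $G$-matrix), forcing $\det C(t)\in\{\pm1\}$. Your argument is essentially the direct inductive computation the authors originally had in mind (and indeed it is the standard way to derive the ``tropical duality'' result of Nakanishi--Zelevinsky in the first place). The trade-off: your proof is self-contained modulo sign-coherence (Theorem~\ref{Sign-coherence of $c$-vectors}) and yields the sharper statement $\det C(t)=(-1)^{d(t,t_0)}$, while the paper's version is a one-line appeal to an external reference that already packages the same use of sign-coherence. Neither approach avoids the deep input of \cite{GHKK}.
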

\begin{proof}
By \cite[Theorem 1.2]{NZ}, the integer matrix $C(t)$ is the inverse of another integer matrix. So the determinant of $C(t)$ is $1$ or $-1$.
\end{proof}

We also need the following fact, which is shown for skew-symmetric cluster algebras in \cite{DWZ2}, but we could not find a  reference in the skew-symmetrizable setting. A close reference is \cite[Lemma 5.1]{Gupta} which describes a similar idea in the skew-symmetric case.
\begin{lemma}\label{F-polynomial 1}
The $F$-polynomial of every cluster variable of a skew-symmetrizable cluster algebra has constant term 1.
\end{lemma}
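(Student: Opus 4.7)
The plan is to prove by induction on the distance $d(t, t_0)$ in the tree $\mathbb{T}$ the simultaneous statement: for every $i\in\{1,\dots,n\}$ and every vertex $t$, the element $F_i(t) := x_i(t)|_{x_1 = \cdots = x_n = 1}$ is a polynomial in $y_1,\dots,y_n$ with constant term $1$. The base case $t=t_0$ is immediate since $F_i(t_0) = 1$.

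For the inductive step, consider an edge $t \overunder{k}{} t'$ with $d(t',t_0) = d(t,t_0)+1$. Cluster variables with index $i\neq k$ are unchanged under this mutation, so only $F_k(t')$ requires analysis. Substituting $x_1=\cdots=x_n=1$ in the exchange relation for $x_k(t)x_k(t')$ yields
\begin{equation*}
F_k(t)\, F_k(t') \;=\; y^{[{\bf c}_k(t)]_+} \prod_{j=1}^{n} F_j(t)^{[b_{jk}^{(t)}]_+} \;+\; y^{[-{\bf c}_k(t)]_+} \prod_{j=1}^{n} F_j(t)^{[-b_{jk}^{(t)}]_+},
\end{equation*}
where ${\bf c}_k(t) := (b_{n+1,k}^{(t)}, \dots, b_{2n,k}^{(t)})$ and $y^{\bf v} := \prod_i y_i^{v_i}$. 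Invoking Theorem~\ref{Sign-coherence of $c$-vectors} together with Lemma~\ref{C-matrix det}, the $c$-vector ${\bf c}_k(t)$ is a nonzero vector lying in $\mathbb{Z}_{\geq 0}^n \cup \mathbb{Z}_{\leq 0}^n$, so exactly one of the monomials $y^{[{\bf c}_k(t)]_+}$ and $y^{[-{\bf c}_k(t)]_+}$ equals $1$ while the other is a non-constant monomial in $y$ (vanishing at $y=0$). By the induction hypothesis, each $F_j(t)$ is a polynomial in $y$ with constant term $1$; therefore the right-hand side $R$ is a polynomial in $y$ satisfying $R|_{y=0}=1$.

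The remaining task is to deduce from the identity $F_k(t)\, F_k(t') = R$ that $F_k(t')$ itself is a polynomial with constant term $1$. This is where I expect the main difficulty, because $F_k(t')$ is a priori only a Laurent polynomial. I would handle it as follows: since $F_k(t)\in\mathbb{Z}[y_1,\dots,y_n]$ has constant term $1$, it is coprime to each $y_i$ in the UFD $\mathbb{Z}[y_1,\dots,y_n]$. Writing $F_k(t') = P/y^{\bf m}$ with $P\in\mathbb{Z}[y_1,\dots,y_n]$ and ${\bf m}\in\mathbb{Z}_{\geq 0}^n$, the identity $F_k(t)\, P = y^{\bf m} R$ then forces $y^{\bf m}\mid P$, so $F_k(t')\in\mathbb{Z}[y_1,\dots,y_n]$. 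Finally, evaluating the product identity at $y=0$ and using $F_k(t)|_{y=0}=1$ gives $F_k(t')|_{y=0} = R|_{y=0} = 1$, which completes the induction.
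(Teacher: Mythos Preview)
Your proof is correct and follows essentially the same route as the paper's: induction on distance in $\mathbb{T}$, the $F$-polynomial mutation identity, sign-coherence plus nonvanishing of $c$-vectors (Lemma~\ref{C-matrix det}) to ensure exactly one summand has constant term $1$, and evaluation at $y=0$. The one difference is that the paper takes for granted that $F_k(t')\in\mathbb{Z}[y_1,\dots,y_n]$ (this is the standard fact from \cite[Proposition~3.6]{ClusteralgebraIV}), whereas you supply a self-contained polynomiality argument via the Laurent phenomenon and coprimality of $F_k(t)$ with each $y_i$; this makes your version slightly more self-contained but is not a different strategy.
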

\begin{proof}
We prove that the conclusion holds for cluster variables in every seed, by induction on the distance from the current seed to the initial seed in the $n$-regular tree $\mathbb{T}$. 

The statement is true for the initial seed since the $F$-polynomials are 1.  

Assume the conclusion is true for the seed $t$ and unknown for $t'=\mu_k(t)$. The rule of change of $F$-polynomials under mutation is given in 
\cite[Proposition 5.1]{fz-ClusterI}, where the only $F$-polynomial that changes under mutation $\mu_k$ is:
$$F_{k}^{(t')}=\frac{
 {y}^{[{\bf c}^{(t)}_{k}]_+}\prod_{i=1}^n (F_i^{(t)})^{[b^{(t)}_{ik}]_+}
+
{y}^{[-{\bf c}^{(t)}_{k}]_+}\prod_{i=1}^n (F_i^{(t)})^{[-b^{(t)}_{ik}]_+}
}{F_{k}^{(t)}}$$
By sign-coherence of $c$-vectors (Theorem \ref{Sign-coherence of $c$-vectors}), $[{\bf c}^{(t)}_{k}]_+\in\mathbb{Z}_{\ge0}^n\cup\mathbb{Z}_{\le0}^n$. Assuming ${\bf c}^{(t)}_{k}\in\mathbb{Z}_{\ge0}$ (the other case can be proved similarly), we have an equality in $\mathbb{Z}[y_1,\dots,y_n]$:
$$F_{k}^{(t')}F_{k}^{(t)}=
 {y}^{[{\bf c}^{(t)}_{k}]_+}\prod_{i=1}^n (F_i^{(t)})^{[b^{(t)}_{ik}]_+}
+
\prod_{i=1}^n (F_i^{(t)})^{[-b^{(t)}_{ik}]_+}
$$
Moreover  ${y}^{[{\bf c}^{(t)}_{k}]_+}\neq1$ since ${\bf c}^{(t)}_{k}\neq{\bf 0}$. Letting $y_1=\cdots=y_n=0$, we immediately conclude that the constant term of $F_k^{(t')}$ is 1. So the conclusion is true for $t'$.
\end{proof}

For convenience, we introduce simpler notations for rank 3 cluster algebras. Let
$$
\quad B=[b_{ij}]=\begin{bmatrix}0&a&-c'\\-a'&0&b\\c&-b'&0\end{bmatrix},
\quad \tilde{B}=[b_{ij}]=\begin{bmatrix}0&a&-c'\\-a'&0&b\\c&-b'&0\\1&0&0\\0&1&0\\0&0&1\end{bmatrix}$$
The assumption that $\tilde{B}$ be skew-symmetrizable implies the existence of positive integers $\delta_1,\delta_2,\delta_3$ such that $\delta_ib_{ij}=-\delta_jb_{ji}$ for all $i,j$. So we can define
$$\bar{a}:=\delta_1a=\delta_2a',\quad \bar{b}:=\delta_2b=\delta_3b',\quad \bar{c}:=\delta_3c=\delta_1c',\quad \textrm{ thus } DB=\begin{bmatrix} 0&\bar {a}&-\bar{c}\\-\bar{a}&0&\bar{b}\\\bar{c}&-\bar{b}&0\end{bmatrix}$$

We say $B$ is \emph{cyclic} if $a,b,c$ are either all strictly positive or all strictly negative, otherwise $B$ is \emph{acyclic}.

 Note that $aa', bb', cc'\ge0$. Denote the $i$-th column of $B$ by $B_i$.  Then $$\bar{b}B_1+\bar{c}B_2+\bar{a}B_3={\bf 0}.$$
In particular, the vectors $B_1,B_2,B_3\in\mathbb{R}^3$ are coplanar, which is a fact essential for this paper.
\smallskip

{\bf In this paper, we assume the \emph{non-degeneracy} condition that at most one of $a,b,c$ is zero.} 
\begin{remark}\label{remark:nondegenerate}
In the degenerate case when at least two of $a,b,c$ are zero, some of the proofs in this paper may not longer work. On the other hand, the degenerate case is essentially a rank 2 cluster algebra and our main theorems follows directly from \cite{LLRZ}. Note that if the initial $B$-matrix satisfies the non-degeneracy condition, then all the $B$-matrices obtained by mutations satisfy the non-degeneracy condition.
\end{remark}

\subsection{Circular order}\label{subsection: Circular order}
\begin{definition}
We say a sequence of coplanar vectors $v_1,\dots,v_n$ is in \emph{circular order}
 if there is an $\mathbb{R}$-linear isomorphism $\phi$ from a plane containing these vectors to the complex plane $\mathbb{C}$ such that  
$$\phi(v_k)=r_ke^{\sqrt{-1}\theta_k}, \; r_k\ge 0 \quad (1\le k\le n),\quad \textup{and} \quad \theta_1\le \theta_2\le\cdots\le \theta_n\le\theta_1+2\pi.$$

\end{definition}

We introduce the following notation. Given $(d_1,d_2,d_3)\in\mathbb{Z}^3$, define
\begin{equation}\label{v1234} 
\vv_i=d_iB_i\; \textrm{ (for $i=1, 2, 3$)}\quad \textup{and} \quad  \vv_4=-\vv_1-\vv_2-\vv_3.
\end{equation}
The following easy observation is  very useful for proving the circular order condition.

\begin{lemma}\label{criterion of circular order}
Assume $(d_1,d_2,d_3)\in\mathbb{Z}_{\ge0}^3\setminus\{(0,0,0)\}$. Then $B_i, B_j, B_k, \vv_4$ is in circular order if and only if the following conditions hold:

(1) If $B_1, B_2, B_3$ are not in the same half plane, then  $\vv_4=\lambda_1B_i+\lambda_2 B_k$ for some $\lambda_1, \lambda_2\ge0$.  

(2) If $B_1, B_2, B_3$ are strictly in the same half plane (so no two are in opposite directions), then $B_j=\eta_1 B_i+\eta_2 B_k$ for some $\eta_1, \eta_2\ge0$. In particular, if two of $B_1,B_2,B_3$ are in the same direction, then one of them is $B_j$.

(3) If two of $B_1, B_2, B_3$ are in opposite directions, then either $B_i, B_k$ are in opposite directions, or ``$B_i, B_j$ are in opposite directions, $d_k=0$, and $\vv_4, B_i$ are in the same direction'', or ``$B_j, B_k$ are in opposite directions, $d_i=0$,  and $\vv_4, B_k$ are in the same direction''. 
\end{lemma}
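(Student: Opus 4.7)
The plan is to reduce to a two-dimensional problem by choosing an $\mathbb{R}$-linear isomorphism $\phi$ from the plane spanned by $B_1,B_2,B_3$ (and containing $\vv_4$) to $\mathbb{C}$, so each vector acquires an angle $\theta_m$ modulo $2\pi$. The definition of circular order then becomes the existence of non-decreasing representatives with total span at most $2\pi$. I will freely use the choices of orientation of $\phi$ and of representatives modulo $2\pi$ to normalize each case.

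For Case (1), non-containment in any half-plane forces the linear relation $\bar{b}B_1+\bar{c}B_2+\bar{a}B_3=0$ to have all three coefficients of the same sign (otherwise one $B_m$ would be a non-negative combination of the other two, placing all three in a common half-plane). This is a positive dependence, so the origin lies in the strict interior of the triangle $B_1B_2B_3$, and the three $B_m$'s partition the plane into three open sectors, each of angular width strictly less than $\pi$. Orienting $\phi$ so that $B_i,B_j,B_k$ appears counterclockwise, the gap from $B_k$ back to $B_i$ is a sector of width less than $\pi$ and so coincides with the non-negative cone $\{\lambda_1 B_i+\lambda_2 B_k:\lambda_1,\lambda_2\ge 0\}$. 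The four-term circular order is therefore equivalent to $\vv_4$ lying in this cone.

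For Case (2), strict containment in an open half-plane $H^+=\{v:v\cdot n>0\}$ forces $\vv_4\cdot n=-\sum_m d_m(B_m\cdot n)<0$, so $\vv_4$ sits in the opposite open half-plane. Since the angular span of the three $B_m$'s is less than $\pi$, the circular gap from $B_k$ to $B_i$ has width greater than $\pi$ and automatically accommodates $\vv_4$. The only nontrivial condition is that $B_j$ be angularly between $B_i$ and $B_k$, which (by the angular span being less than $\pi$) is equivalent to $B_j=\eta_1 B_i+\eta_2 B_k$ with $\eta_1,\eta_2\ge 0$. The parenthetical remark follows because if two of the $B_m$'s point in the same direction, their non-negative cone collapses to a single ray, and $B_j$ must lie on this ray.

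The main obstacle lies in Case (3). The non-degeneracy hypothesis is essential here: any $3\times 3$ skew-symmetrizable matrix has even rank, so $B$ has rank exactly $2$ (rank $0$ being degenerate), forcing the third $B_m$ to lie off the line of the antiparallel pair. I would split the analysis according to which of $\{i,k\},\{i,j\},\{j,k\}$ is the antiparallel pair. When $B_i,B_k$ are antiparallel, orient $\phi$ so that $\theta_i=0,\theta_k=\pi$; the constraints $d_j\ge 0$ and $d_j+d_i+d_k>0$ force $\vv_4$ to lie on the opposite side of the line through $B_i,B_k$ from $B_j$, so after a possible orientation flip one has $\theta_j\in(0,\pi)$ and $\theta_4\in[\pi,2\pi]$, giving the circular order automatically. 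When $B_i,B_j$ are antiparallel (the $B_j,B_k$ sub-case being symmetric), I would orient $\phi$ so that $\theta_i=0,\theta_j=\pi$ and, possibly after a further flip, arrange $\theta_k\in(\pi,2\pi)$. The required gap $[\theta_k,2\pi]$ then coincides with the non-negative cone of $B_i$ and $B_k$. Writing $\vv_4=(d_j\rho-d_i)B_i-d_k B_k$ in the $\{B_i,B_k\}$-basis (using $B_j=-\rho B_i$ with $\rho>0$), membership in this cone combined with $d_k\ge 0$ forces $d_k=0$, and the remaining inequality $d_j\rho\ge d_i$ expresses exactly that $\vv_4$ points in the same direction as $B_i$.
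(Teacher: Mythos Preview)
Your argument is correct and follows the same geometric case analysis that the paper has in mind; the paper's own proof is simply ``This is an easy observation using Figure~\ref{fig:3cases} as reference,'' and what you have written is precisely the elaboration one would give of that figure. Your use of the rank-$2$ fact (via $DB$ skew-symmetric of even rank) to ensure the third column is off the line in Case~(3), and your handling of the boundary situations $d_j=0$ and $\vv_4=0$ via the closed interval $[\pi,2\pi]$, are the only points where care is needed, and you treat them adequately.
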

\begin{proof} This is an easy observation using Figure \ref{fig:3cases} as reference.
\begin{figure}[ht]
\begin{tikzpicture}
\node at (0,-2) {(1)};
\node[left] at (0,0) {$O$};
\draw[-latex](0,0)--(1,1); \node[right] at (1,1){$B_i$};
\draw[-latex](0,0)--(-.5,1.5); \node[right] at (-.5,1.5){$B_j$};
\draw[-latex](0,0)--(-.5,-1.5); \node[right] at (-.5,-1.5){$B_k$};
\draw[-latex](0,0)--(1.5,-1); \node[right] at (1.5,-1){$\vv_4$};
\end{tikzpicture}
\hspace{2cm}
\begin{tikzpicture}
\node at (0,-2) {(2)};
\node[left] at (0,0) {$O$};
\draw[-latex](0,0)--(1,1); \node[right] at (1,1){$B_i$};
\draw[-latex](0,0)--(-.5,1.5); \node[right] at (-.5,1.5){$B_j$};
\draw[-latex](0,0)--(-1,-.3); \node[left] at (-1,-.3){$B_k$};
\draw[-latex](0,0)--(1.5,-1); \node[right] at (1.5,-1){$\vv_4$};
\end{tikzpicture}

\begin{tikzpicture}
\node at (0,-2) {(3) $B_i, B_k$ opposite};
\node[left] at (0,0) {$O$};
\draw[-latex](0,0)--(1,1); \node[right] at (1,1){$B_i$};
\draw[-latex](0,0)--(-.5,1.5); \node[right] at (-.5,1.5){$B_j$};
\draw[-latex](0,0)--(-.7,-.7); \node[left] at (-.7,-.7){$B_k$};
\draw[-latex](0,0)--(1.5,-1); \node[right] at (1.5,-1){$\vv_4$};
\end{tikzpicture}
\begin{tikzpicture}
\node at (0,-2) {(3) $B_i, B_j$ opposite, $d_k=0$};
\node[left] at (0,0) {$O$};
\draw[-latex](0,0)--(1,1); \node[right] at (1,1){$B_i$};
\draw[-latex](0,0)--(-.7,-.7); \node[right] at (-.7,-.7){$B_j$};
\draw[-latex](0,0)--(1.5,-1); \node[above right] at (1.5,-1){$B_k$};
\draw[-latex](0,0)--(.5,.5); \node[above left] at (.5,.5){$\vv_4$};
\end{tikzpicture}
\begin{tikzpicture}
\node at (0,-2) {(3) $B_j, B_k$ opposite, $d_i=0$};
\node[left] at (0,0) {$O$};
\draw[-latex](0,0)--(.7,.7); \node[right] at (.7,.7){$B_j$};
\draw[-latex](0,0)--(-1,-1); \node[right] at (-1,-1){$B_k$};
\draw[-latex](0,0)--(1.5,-1); \node[above right] at (1.5,-1){$B_i$};
\draw[-latex](0,0)--(-.5,-.5); \node[below right] at (-.5,-.5){$\vv_4$};
\end{tikzpicture}
\caption{The three cases of Lemma \ref{criterion of circular order}}\label{fig:3cases}
\end{figure}
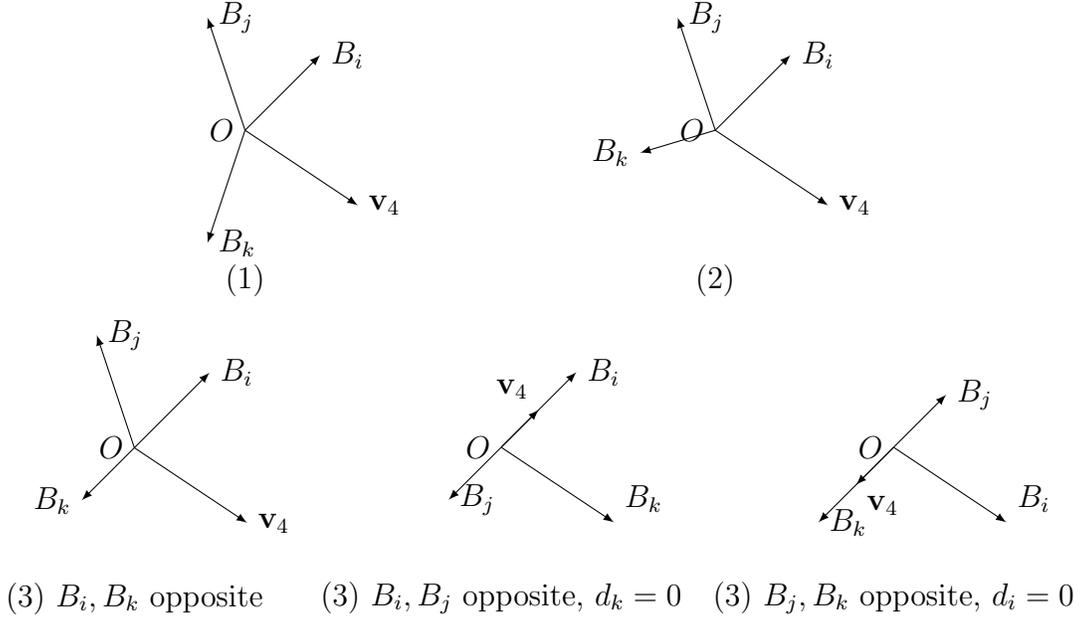
\end{proof}

\subsection{Weakly convex quadrilaterals}
\begin{definition}
Assume four points $P_1,P_2,P_3,P_4\in \mathbb{R}^3$, not necessarily distinct,  are coplanar. 
We call the polygon ${\bf P}=P_1P_2P_3P_4$ a \emph{weakly convex quadrilateral} if the four vectors $\overrightarrow{P_1P_2}$, $\overrightarrow{P_2P_3}$, $\overrightarrow{P_3P_4}$, $\overrightarrow{P_4P_1}$ are in circular order. 

We use convention that $P_{i+4k}=P_i$ for $1\le i\le 4$ and $k\in\mathbb{Z}$.  

If ${\bf P}=P_1P_2P_3P_4$ is a weakly convex quadrilateral,  we denote by $|{\bf P}|\subset\mathbb{R}^3$ the convex hull of $\{P_1, P_2, P_3, P_4\}$.     
\end{definition}
\begin{remark}
By definition, $|{\bf P}|$ is just a bounded convex subset of a real plane inside $\mathbb{R}^3$, while ${\bf P}$ ``remembers" four points in the polygon which are not necessarily distinct and not necessarily the vertices of the polygon. Nevertheless, the set of vertices of $|{\bf P}|$ is a subset of $\{P_1,\dots,P_4\}$. See Figure \ref{fig:limit of convex quadrilaterals} for some examples of ${\bf P}$. 
When we talk about the physical features of a weakly convex quadrilateral ${\bf P}$, where we do not have to pay attention to the four special points $P_1,\dots,P_4$ of ${\bf P}$, we would for simplicity identify ${\bf P}$ with $|{\bf P}|$, the underlying convex set. This applies to phrases like ``a point is contained
in ${\bf P}$",  ``the Newton polytope of a cluster variable is ${\bf P}$",  ``${\bf P}$ is a line segment",  ``${\bf P}$ is a triangle", or ``$\dim{\bf P}=1$''. If we need to use the actual coordinates of the points $P_1,\dots,P_4$, then we distinguish ${\bf P}$ from $|{\bf P}|$. This includes Lemma \ref{shape weak convex}, Lemma \ref{quadrilateral}, and Section \ref{subsection:Proof of (SC')}.

\end{remark}

In the following, we shall give a more explicit description of weakly convex quadrilaterals. Recall that ${\bf P}$ is a (usual) convex quadrilateral if the four vectors $\overrightarrow{P_1P_2}$, $\overrightarrow{P_2P_3}$, $\overrightarrow{P_3P_4}$, $\overrightarrow{P_4P_1}$ are in circular order, all nonzero,  and that no two are in the same direction. In terms of complex numbers, that is to say: if there is an $\mathbb{R}$-linear isomorphism $\phi$ such that
$$\phi(\overrightarrow{P_kP_{k+1}})=r_ke^{\sqrt{-1}\theta_k}, \; r_k> 0 \quad (1\le k\le 4),\quad \textup{and} \quad \theta_1< \theta_2<\theta_3< \theta_4<\theta_1+2\pi.$$

\begin{lemma}\label{shape weak convex}
Let ${\bf P}=P_1P_2P_3P_4$. The following are equivalent:

(1) ${\bf P}$ is weakly convex.

(2) ${\bf P}$ is the limit of a sequence of convex quadrilaterals.

(3) ${\bf P}$ is one of the following:

\begin{itemize}
\item $\dim{\bf P}=0$ (that is, $P_1=P_2=P_3=P_4$, so ${\bf P}$ degenerates to a point);

\item $\dim{\bf P}=1$, ${\bf P}$ is a line segment $P_iP_{i+3}$ and $P_i, P_{i+1}, P_{i+2}, P_{i+3}$ are arranged in order, for some $1\le i\le 4$; 

\item $\dim{\bf P}=1$,  ${\bf P}$ is a line segment $P_iP_{i+2}$ which contains $P_{i+1}$ and $P_{i+3}$, for some $1\le i\le 4$;

\item $\dim{\bf P}=2$, ${\bf P}$ is a triangle $P_iP_{i+1}P_{i+2}$ whose side $P_{i+2}P_i$ contains the point $P_{i+3}$, for some $1\le i\le 4$;

\item $\dim{\bf P}=2$, $P_1P_2P_3P_4$ is a (usual) convex quadrilateral.
\end{itemize}

\end{lemma}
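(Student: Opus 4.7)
The plan is to establish the cycle (2) $\Rightarrow$ (1) $\Rightarrow$ (3) $\Rightarrow$ (2). The implication (2) $\Rightarrow$ (1) is essentially by continuity: for a sequence of convex quadrilaterals the edge vectors satisfy circular order with strict inequalities, and since circular order as defined uses only the weak inequalities $\theta_1 \le \theta_2 \le \theta_3 \le \theta_4 \le \theta_1 + 2\pi$ (with $r_k \ge 0$ allowed), the condition is closed under limits of the edge vectors. For (3) $\Rightarrow$ (2), each of the five configurations admits an explicit convex-quadrilateral perturbation: a single point can be replaced by any shrinking convex quadrilateral around it; a collinear configuration of the second or third type can be thickened by displacing alternate $P_k$'s slightly off the line in opposite normal directions; a triangle with $P_{i+3}$ on side $P_{i+2}P_i$ can be deformed by pushing $P_{i+3}$ slightly outside the triangle; and a proper convex quadrilateral is already the desired approximation. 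A short inspection shows each perturbation is still in strict circular order.

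The heart of the proof is (1) $\Rightarrow$ (3), which I would handle by case analysis on $d := \dim|{\bf P}|$. The case $d=0$ is immediate. When $d=2$, the key claim is that circular order of the edges forces the closed polyline $P_1P_2P_3P_4P_1$ to be a (possibly degenerate) convex polygon: otherwise either the polyline is self-crossing or one $P_k$ lies strictly inside the triangle formed by the other three, and in both situations an explicit edge can be exhibited whose angle violates the monotonicity $\theta_1 \le \cdots \le \theta_4$. The remaining possibilities are therefore a proper convex quadrilateral (fifth bullet) or a triangle with exactly one $P_k$ on a side; in the latter case one uses the cyclic labeling to verify that $P_k$ must lie on the side opposite to itself, i.e.\ on $P_{k-1}P_{k+1}$, giving the fourth bullet.

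The main obstacle is the case $d=1$, where all edges are parallel to a single line and their angles take only the two values $0$ and $\pi$ (with zero vectors allowed). I would parametrize the four points by real scalars $t_k$ on the line; circular order then says the cyclic sign pattern of $(t_{k+1}-t_k)_{k=1}^4$ has at most one descent from $+$ to $-$ and one ascent from $-$ to $+$. Enumerating these patterns and using the identity $\sum_k(t_{k+1}-t_k)=0$ yields, after a cyclic relabeling, either a monotone order $t_i\le t_{i+1}\le t_{i+2}\le t_{i+3}$ (second bullet) or a V-shaped order in which $t_i$ and $t_{i+2}$ are the two extremes and both $t_{i+1}$ and $t_{i+3}$ lie between them (third bullet). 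The delicate point here is to verify that the degenerate sub-patterns with several zero edges, arising when some $P_k$ coincide, are still captured by these two bullets; this follows from checking each sign pattern individually, but requires careful choice of the starting index $i$.
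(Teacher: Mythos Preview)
Your proposal is correct and follows the same implication cycle $(2)\Rightarrow(1)\Rightarrow(3)\Rightarrow(2)$ as the paper; the arguments for $(2)\Rightarrow(1)$ and $(3)\Rightarrow(2)$ are essentially identical to the paper's. The genuine difference is in $(1)\Rightarrow(3)$. You argue directly by case analysis on $d=\dim|\mathbf P|$, whereas the paper proves the contrapositive: it simply enumerates the configurations \emph{not} appearing in (3) --- namely a $1$-dimensional segment with $P_i,P_{i+2},P_{i+1},P_{i+3}$ in order, and a $2$-dimensional triangle $P_iP_{i+1}P_{i+2}$ with $P_{i+3}$ lying somewhere other than on the side $P_{i+2}P_i$ --- and observes from a picture that each of these fails the circular-order condition. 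The paper's route is shorter because the complement of (3) is small and visually transparent.

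One caution about your direct $d=2$ argument: the dichotomy ``either the polyline is self-crossing or some $P_k$ lies strictly inside the triangle of the other three'' is not exhaustive. For instance, if $P_{i+3}$ lies on the side $P_iP_{i+1}$ (rather than $P_{i+2}P_i$), the polyline $P_1P_2P_3P_4P_1$ has no transversal self-intersection and no strictly interior vertex, yet it is not a convex polygon and is not weakly convex. Your later remark that one must ``verify that $P_k$ lies on $P_{k-1}P_{k+1}$'' is exactly what repairs this, but it should be phrased as part of the case split rather than as an afterthought. Your $d=1$ analysis via sign patterns of the scalar increments $t_{k+1}-t_k$ is more explicit than the paper's picture and works fine.
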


\begin{proof}
(1)$\Leftarrow$(2): Assume the sequence of ${\bf P}^{(j)}=P_1^{(j)}P_2^{(j)}P_3^{(j)}P_4^{(j)}$ has limit ${\bf P}$ and  
$$\phi(P_k^{(j)}P_{k+1}^{(j)})=r_k^{(j)}e^{\sqrt{-1}\theta_k^{(j)}}, \; r_k^{(j)}> 0 \quad (1\le k\le 4),$$ 
and 
$$\theta_1^{(j)}< \theta_2^{(j)}<\theta_3^{(j)}< \theta_4^{(j)}<\theta_1^{(j)}+2\pi.$$
By choosing appropriate angles $\theta_k^{(j)}$ and replace the sequence by a subsequence if necessary, we can assume 
$$\lim_{j\to\infty} r_k^{(j)}=r_k, \quad \lim_{j\to\infty} \theta_k^{(j)}=\theta_k.$$
Then by the property of limits we conclude that $r_k\ge0$ and $\theta_1\le\theta_2\le\theta_3\le\theta_4\le\theta_1+2\pi$. So $\overrightarrow{P_1P_2},\dots,\overrightarrow{P_4P_1}$ are in circular order.

(2)$\Leftarrow$(3): Obvious from the Figure \ref{fig:limit of convex quadrilaterals}.
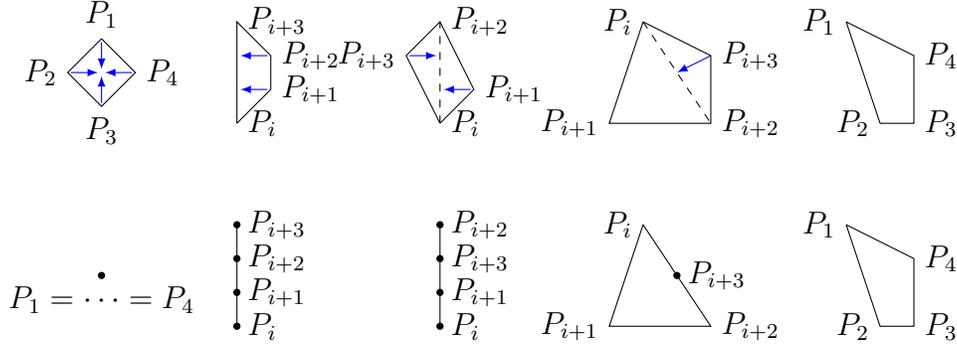
\begin{figure}[ht]
\begin{tikzpicture}[scale=0.45]
\draw (0,1)--(-1,0)--(0,-1)--(1,0)--(0,1);
\draw[-latex,blue] (0,.9)--(0,0.1);
\draw[-latex,blue] (0,-.9)--(0,-0.1);
\draw[-latex,blue] (-.9,0)--(-.1,0);
\draw[-latex,blue] (.9,0)--(.1,0);
\node[above] at (0,1) {$P_1$};
\node[left] at (-1,0) {$P_2$};
\node[below] at (0,-1) {$P_3$};
\node[right] at (1,0) {$P_4$};
\begin{scope}[shift={(0,-6)}]
 \draw[black,fill] (0,0) circle (.5ex);
\node[below] at (0,0) {$P_1=\cdots=P_4$};
\end{scope}
\begin{scope}[shift={(4,-1.5)}]
\draw (0,0)--(1,1)--(1,2)--(0,3)--(0,0);
\draw[-latex,blue] (.9,1)--(.1,1);
\draw[-latex,blue] (.9,2)--(.1,2);
\node[right] at (0,0) {$P_i$};
\node[right] at (1,1) {$P_{i+1}$};
\node[right] at (1,2) {$P_{i+2}$};
\node[right] at (0,3) {$P_{i+3}$};
\end{scope}
\begin{scope}[shift={(4,-7.5)}]
 \draw[black,fill] (0,3) circle (.5ex);\draw (0,0)--(0,3);\node[right] at (0,3) {$P_{i+3}$};
 \draw[black,fill] (0,2) circle (.5ex);\node[right] at (0,2) {$P_{i+2}$};
 \draw[black,fill] (0,1) circle (.5ex); \node[right] at (0,1) {$P_{i+1}$};
 \draw[black,fill] (0,0) circle (.5ex);\node[right] at (0,0) {$P_i$};
\end{scope}
\begin{scope}[shift={(10,-1.5)}]
\draw (0,0)--(1,1)--(0,3)--(-1,2)--(0,0);
\draw[dashed] (0,0)--(0,3);
\draw[-latex,blue] (.9,1)--(.1,1);
\draw[-latex,blue] (-.9,2)--(-.1,2);
\node[right] at (0,0) {$P_i$};
\node[right] at (1,1) {$P_{i+1}$};
\node[left] at (-1,2) {$P_{i+3}$};
\node[right] at (0,3) {$P_{i+2}$};
\end{scope}
\begin{scope}[shift={(10,-7.5)}]
 \draw[black,fill] (0,3) circle (.5ex);\draw (0,0)--(0,3);\node[right] at (0,3) {$P_{i+2}$};
 \draw[black,fill] (0,2) circle (.5ex);\node[right] at (0,2) {$P_{i+3}$};
 \draw[black,fill] (0,1) circle (.5ex); \node[right] at (0,1) {$P_{i+1}$};
 \draw[black,fill] (0,0) circle (.5ex);\node[right] at (0,0) {$P_i$};
\end{scope}
\begin{scope}[shift={(14,-1.5)}]
\draw (2,3)--(1,0)--(4,0)--(4,2)--(2,3);
\draw[dashed] (2,3)--(4,0);
\draw[-latex,blue] (4,2)--(3,1.5);
\node[left] at (2,3) {$P_i$};
\node[left] at (1,0) {$P_{i+1}$};
\node[right] at (4,0) {$P_{i+2}$};
\node[right] at (4,2) {$P_{i+3}$};
\end{scope}
\begin{scope}[shift={(14,-7.5)}]
\draw (2,3)--(1,0)--(4,0)--(2,3);
\node[left] at (2,3) {$P_i$};
\node[left] at (1,0) {$P_{i+1}$};
\node[right] at (4,0) {$P_{i+2}$};
 \draw[black,fill] (3,1.5) circle (.5ex);\node[right] at (3,1.5) {$P_{i+3}$};
\end{scope}
\begin{scope}[shift={(20,-1.5)}]
\draw (2,3)--(3,0)--(4,0)--(4,2)--(2,3);
\node[left] at (2,3) {$P_1$};
\node[left] at (3,0) {$P_2$};
\node[right] at (4,0) {$P_3$};
\node[right] at (4,2) {$P_4$};
\end{scope}
\begin{scope}[shift={(20,-7.5)}]
\draw (2,3)--(3,0)--(4,0)--(4,2)--(2,3);
\node[left] at (2,3) {$P_1$};
\node[left] at (3,0) {$P_2$};
\node[right] at (4,0) {$P_3$};
\node[right] at (4,2) {$P_4$};
\end{scope}
\end{tikzpicture}
\caption{Bottom figures are the limit of the corresponding top convex quadrilaterals for the five cases in Lemma \ref{shape weak convex} (3)}\label{fig:limit of convex quadrilaterals}
\end{figure}

(1)$\Rightarrow$(3): we show the contrapositive. If ${\bf P}$ is not listed in (3), then it must be one of those listed below,  all of which are obviously not weakly convex (see Figure \ref{fig:not weakly convex}).

\begin{itemize}

\item $\dim{\bf P}=1$, ${\bf P}$ is a line segment $P_iP_{i+3}$ and $P_i, P_{i+2}, P_{i+1}, P_{i+3}$ are arranged in order, for some $1\le i\le 4$; 

\item $\dim{\bf P}=2$, ${\bf P}$ is a triangle $P_iP_{i+1}P_{i+2}$ for some $1\le i\le 4$, and the line segment $P_{i+2}P_i$ does not contain the point $P_{i+3}$.
\end{itemize}

\begin{figure}[ht]
\begin{tikzpicture}[scale=0.52]
 \draw[black,fill] (0,3) circle (.5ex);\draw (0,0)--(0,3);\node[right] at (0,3) {$P_{i+3}$};
 \draw[black,fill] (0,2) circle (.5ex);\node[right] at (0,2) {$P_{i+1}$};
 \draw[black,fill] (0,1) circle (.5ex); \node[right] at (0,1) {$P_{i+2}$};
 \draw[black,fill] (0,0) circle (.5ex);\node[right] at (0,0) {$P_i$};
\begin{scope}[shift={(2,0)}]
\draw (2,3)--(1,0)--(4,0)--(2.5,1)--(2,3);
\node[left] at (2,3) {$P_i$};
\node[below] at (1,0) {$P_{i+1}$};
\node[below] at (4,0) {$P_{i+2}$};
 \draw[black,fill] (2.5,1) circle (.5ex);\node[right] at (2.5,1.3) {$P_{i+3}$};
 \end{scope}
\begin{scope}[shift={(8,0)}]
\draw (2,3)--(1,0)--(4,0)--(2.5,0)--(2,3);
\node[left] at (2,3) {$P_i$};
\node[below] at (1,0) {$P_{i+1}$};
\node[below] at (4,0) {$P_{i+2}$};
 \draw[black,fill] (2.5,0) circle (.5ex);\node[above right] at (2.3,0) {$P_{i+3}$};
\end{scope}
\begin{scope}[shift={(14,0)}]
\draw (2,3)--(1,0)--(4,0)--(1.5,1.5)--(2,3);
\node[left] at (2,3) {$P_i$};
\node[below] at (1,0) {$P_{i+1}$};
\node[below] at (4,0) {$P_{i+2}$};
 \draw[black,fill] (1.5,1.5) circle (.5ex);\node[right] at (1.5,1.5) {$P_{i+3}$};
\end{scope}
\begin{scope}[shift={(20,0)}]
\draw (2,3)--(1,0)--(4,0);
\node[left] at (2,3) {$P_i$};
\node[below] at (1,0) {$P_{i+1}=P_{i+3}$};
\node[below] at (4,0) {$P_{i+2}$};
 \draw[black,fill] (1,0) circle (.5ex);
\end{scope}
\end{tikzpicture}
\caption{Quadrilaterals that are not weakly convex}\label{fig:not weakly convex} 
\end{figure}
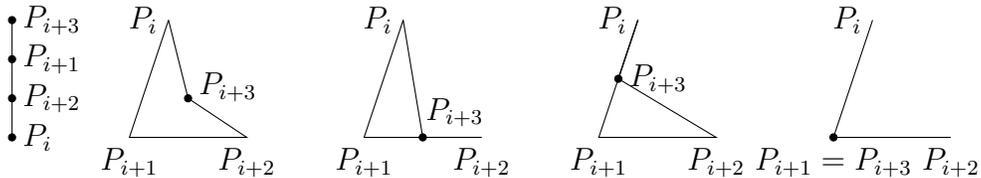
\end{proof}

\begin{definition}
Given $r$ vectors $P_1,P_2,\ldots,P_r$ in $\mathbb{R}^3$ with coordinates $P_i=\left[\begin{smallmatrix}p_{i1}\\p_{i2}\\p_{i3}\end{smallmatrix}\right]$, define their \emph{minimum vector} 
by
\[\minvecr = \left[\begin{matrix} m_1\\m_2\\m_3\end{matrix}\right],\quad \textup{with } m_i=\textup{min}(p_{1,i},p_{2,i},\ldots,p_{r,i})\]
\end{definition}
For example, if $P_1=\left[\begin{smallmatrix} 1\\2\\3\end{smallmatrix}\right]$ and $P_2=\left[\begin{smallmatrix} 3\\4\\0\end{smallmatrix}\right]$, then $\minvec(P_1,P_2)=\left[\begin{smallmatrix} 1\\2\\0\end{smallmatrix}\right]$.

\subsection{Weakly convex quadrilaterals in rank 3 cluster algebras}\label{Weakly convex quadrilaterals in rank 3 cluster algebras}

We are going to construct a weakly convex quadrilateral ${\bf P_d}$ for all positive integer vectors ${\bf d}\in \mathbb{Z}^3$. Later the vector ${\bf d}$ will be the  denominator vector of a cluster variable. 
For the initial denominator vectors ${\bf d}\in\left\{\left[\begin{smallmatrix} -1\\0\\0\end{smallmatrix}\right],
\left[\begin{smallmatrix} 0\\-1\\0\end{smallmatrix}\right],
\left[\begin{smallmatrix} 0\\0\\-1\end{smallmatrix}\right]
\right\}$, we let ${\bf P_d}$ be the degenerate quadrilateral consisting of the point ${\bf d}$. For all other ${\bf d}$ we have the following lemma. 
Recall that $\vv_i={\bf d}_i B_i$.
\begin{lemma}\label{quadrilateral}
 For all ${\bf d}\in \mathbb{Z}_{\ge 0}^3 \setminus{(0,0,0)}$ there exists a weakly convex quadrilateral 
${\bf P_d}={\bf P}^B_{\bf d}=P_1P_2P_3P_4$ 
 (We use ${\bf P_d}$ when there is no risk of confusion of which matrix $B$ we refer to.)

such that
 
\begin{itemize}
\item[(1)] There is a permutation $(i,j,k)$ of $(1,2,3)$ such that

\begin{itemize}
\item[(1a)] $b_{ij}\ge0, b_{jk}\ge0$. 
\item[(1b)]  $\overrightarrow{P_1P_2}={\bf v}_i$, $\overrightarrow{P_2P_3}={\bf v}_j$, $\overrightarrow{P_3P_4}={\bf v}_k$. 
\item[(1c)]  the four vectors
$B_i,B_j,B_k,\vv_4=-\vv_i-\vv_j-\vv_k$ are in circular order.
\end{itemize}
\smallskip

\item[(2)] $\minvec(P_1,P_2,P_3,P_4)=-{\bf d}$.
\end{itemize}
Moreover, even though  ${\bf P_d}$ may not be uniquely determined by the above condition  (because  $P_1,\dots, P_4$ may vary), the convex hull $|{\bf P_d}|$ is unique.
\end{lemma}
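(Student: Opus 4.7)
The plan is to prove existence by an explicit construction and uniqueness by a sorting argument exploiting the coplanarity and zero-sum property of the four edge vectors. I would first identify the permutation $(i,j,k)$ demanded by (1a), which is a purely combinatorial condition on $B$. A sign inspection of the nine entries of $B$ shows that at least one cyclic ordering of $\{1,2,3\}$ satisfies $b_{ij}\ge 0$ and $b_{jk}\ge 0$: in the cyclic case (all of $a,b,c$ with the same sign) every cyclic rotation of either $(1,2,3)$ or $(3,2,1)$ works, while in the acyclic case a unique such cyclic ordering (up to reversal) exists.

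Next I would verify the circular-order condition (1c) using Lemma~\ref{criterion of circular order}. The identity $\bar{b}B_1+\bar{c}B_2+\bar{a}B_3={\bf 0}$, together with the signs of $\bar{a},\bar{b},\bar{c}$ dictated by $B$, implies that $B_1,B_2,B_3$ either lie in a closed half-plane (acyclic case) or strictly surround the origin (cyclic case). In the acyclic case, the combinatorially forced index $j$ from the previous paragraph is precisely the one whose column is a non-negative combination of the other two, so Lemma~\ref{criterion of circular order}(2) applies directly. In the cyclic case, $\vv_4=-d_1B_1-d_2B_2-d_3B_3$ lies in exactly one of the three sectors cut out by $B_1,B_2,B_3$, and this determines which cyclic rotation of the permutation to take so that Lemma~\ref{criterion of circular order}(1) applies. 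Degenerate sub-cases (some $d_m=0$, $\vv_4={\bf 0}$, or two of $B_1,B_2,B_3$ antiparallel) are covered by Lemma~\ref{criterion of circular order}(3).

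With $(i,j,k)$ fixed I would then choose $P_1$ so that condition (2) holds. Writing $P_2=P_1+\vv_i$, $P_3=P_2+\vv_j$, $P_4=P_3+\vv_k$, each coordinate of each $P_m$ is an affine expression in the corresponding coordinate of $P_1$ with offsets given by the $d_i,d_j,d_k$-scaled entries of the columns $B_i,B_j,B_k$. The sign pattern from (1a), together with the skew-symmetrizing relations $\delta_s b_{st}=-\delta_t b_{ts}$, forces the minimum of the $\ell$-th coordinates of $P_1,\ldots,P_4$ to be attained at a specific vertex for each $\ell\in\{1,2,3\}$; setting these three minima equal to $-d_\ell$ yields three linear equations with a unique solution for $P_1$.

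For the uniqueness of $|{\bf P_d}|$, I would argue that the multiset of edge vectors $\{\vv_i,\vv_j,\vv_k,\vv_4\}$ depends only on ${\bf d}$ and $B$, and any two weakly convex quadrilaterals with the same coplanar zero-sum edge multiset have the same convex hull up to translation, since the cyclic ordering of such vectors into a weakly convex polygon is unique up to reversal and the reinsertion of zero or parallel edges. Condition (2) then fixes the translation. The main obstacle will be the circular-order case analysis, especially at the transitions between the three cases of Lemma~\ref{criterion of circular order} (when $\vv_4$ lies on a sector boundary, or when one of $a,b,c$ vanishes), where several choices of $(i,j,k)$ may simultaneously satisfy (1) and one must verify they all produce the same convex hull.
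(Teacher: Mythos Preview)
Your proposal is correct and follows essentially the same route as the paper: choose a permutation $(i,j,k)$ compatible with the sign pattern of $B$, verify the circular-order condition (1c) via Lemma~\ref{criterion of circular order} by splitting into the acyclic (half-plane) and cyclic (three-sector) configurations together with their degenerations, and then translate the resulting quadrilateral to enforce condition~(2). The paper organizes the same analysis into five explicit sub-cases (acyclic with $abc\neq 0$; acyclic with one entry zero and the other two of equal sign; acyclic sink; acyclic source; cyclic), each normalized by symmetry, but the underlying argument is the one you describe.

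The one place where you depart slightly from the paper is the uniqueness of $|{\bf P_d}|$: the paper checks in each sub-case that the admissible permutations all produce the same convex hull, whereas you appeal to the general fact that a weakly convex polygon is determined (up to translation) by its multiset of edge vectors. Your argument is cleaner, but make sure to state explicitly why reversal of the cyclic order and the reinsertion of zero or parallel edges at alternate positions cannot change the hull; this is exactly where the paper's case checks do real work.
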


\begin{proof}
We define ${\bf P_d}=P_1P_2P_3P_4$ as follows.
 For an illustration see Example \ref{eg:x621}.
First define a quadrilateral ${\bf\tilde  P}$ by the vertices $\tilde P_1=(0,0,0)$, $\tilde P_2=\tilde P_1+\vv_i 
$, $\tilde P_3=\tilde P_2+\vv_j
$, and  $\tilde P_4=\tilde P_3+\vv_k$.  
  Clearly this quadrilateral satisfies condition (1b), but it does not necessarily satisfy condition (2).
      Let $\tilde{\bf d'}=-\minvec(\tilde P_1,\tilde P_2,\tilde P_3,\tilde P_4)$.
Then define the quadrilateral ${\bf P_d}$ as the translation of the quadrilateral ${\bf \tilde P}$ by ${\bf d}'-{\bf d}$. 
Then ${\bf P_d}$ satisfies conditions (1b) and (2).

Also note  that condition (1c) implies  the  weaker condition that the vectors 
$\overrightarrow{P_1P_2}=\vv_i$, $\overrightarrow{P_2P_3}=\vv_j$, $\overrightarrow{P_3P_4}=\vv_k$, $\overrightarrow{P_4P_1}=\vv_4$ are in circular order. Thus (1c) implies that the quadrilateral is weakly convex.

Thus it remains to show conditions (1a) and (1c). 
We prove these in five separate cases. See Figures \ref{fig:case1}--\ref{fig:case5}.

(Case 1) Suppose $Q$ is acyclic and $abc\neq0$. We may assume without loss of generality that $a,b>0$, and $c<0$. Let $(i,j,k) =(1,2,3)$. Thus condition (1a) holds. Furthermore,
in this case, 
 $B_2=(-\bar{b}/\bar{c})B_1+(-\bar{a}/\bar{c})B_3$  is a positive linear combination of $B_1$ and $B_3$. Hence all three vectors $B_1,B_2,B_3$ lie in the same half plane. Thus Lemma \ref{criterion of circular order}(b) implies that $B_1,B_2, B_3, \vv_4 $ are in circular order. This proves (1c).

 \begin{figure}[ht]
\begin{tikzpicture}
\node[left] at (0,0) {$O$};
\draw[-latex](0,0)--(0,-1); \node[right] at (0,-1){$B_1$};
\draw[-latex](0,0)--(1.5,0); \node[right] at (1.5,0){$B_2$};
\draw[-latex](0,0)--(0.5,1); \node[right] at (0.5,1){$B_3$};
\draw[-latex](0,0)--(-1.5,1); \node[right] at (-1.5,1){$\vv_4$};
\begin{scope}[shift={(4,0)}]
\node[left] at (0,1) {$P_1$};
\draw[-latex](0,1)--(0,-1); \node[left] at (0,-1){$P_2$};
\draw[-latex](0,-1)--(1.5,-1); \node[right] at (1.5,-1){$P_3$};
\draw[-latex](1.5,-1)--(2,0); \node[right] at (2,0){$P_4$};
\draw[-latex](2,0)--(0,1);
\end{scope}
\end{tikzpicture}
\caption{(Case 1)}\label{fig:case1}
\end{figure}

(Case 2) Suppose $Q$ is acyclic, one of $a, b, c$ is zero and the other two have the same sign. That is, $Q$ has two arrows forming a length-2 directed path. Without loss of generality, we may assume that $Q$ is $1\to 2\to 3$, that is, $a,b>0$ and $c=0$.   The vectors $B_1$ and $B_3$ are in opposite directions.

If $d_2>0$, we let  $(i,j,k)=(1,2,3)$. Then condition (1a) holds and condition (1c) holds by Lemma \ref{criterion of circular order}(c). Note that condition (1a) would also hold for $(i,j,k)=(3,1,2)$, or $(2,3,1)$; however, condition (1c) would fail for both. Thus the permutation of $(i,j,k)$, and hence the quadrilateral ${\bf P_d}$, are unique in this case. 

If $d_2=0$, conditions (1a) and (1c) hold for $(i,j,k)=(1,2,3)$ and $(3,1,2)$. In both cases, the quadrilateral degenerates to a line segment in the direction of $B_1$. of length $\max(|\vv_1|,|\vv_3|)$.  The  case (1,2,3) is illustrated in the second picture of Figure \ref{fig:case2}. 
In particular, even though ${\bf P_d}$ may not be uniquely determined by the conditions of the lemma, the convex hull $|{\bf P_d}|$ is unique.

\begin{figure}[ht]
\begin{tikzpicture}
\node at (-3,-1.5) {$d_2>0$};
\node[left] at (0,0) {$O$};
\draw[-latex](0,0)--(0,-1); \node[right] at (0,-1){$B_1$};
\draw[-latex](0,0)--(1.5,0); \node[right] at (1.5,0){$B_2$};
\draw[-latex](0,0)--(0,1); \node[right] at (0,1){$B_3$};
\draw[-latex](0,0)--(-1,-.5); \node[left] at (-1,-.5){$\vv_4$};
\begin{scope}[shift={(5,0)}]
\node[left] at (0,0) {$P_1$};
\draw[-latex](0,0)--(0,-1); \node[left] at (0,-1){$P_2$};
\draw[-latex](0,-1)--(1.5,-1); \node[right] at (1.5,-1){$P_3$};
\draw[-latex](1.5,-1)--(1.5,.75); \node[right] at (1.5,.75){$P_4$};
\draw[-latex](1.5,.75)--(0,0);
\end{scope}
\end{tikzpicture}
\begin{tikzpicture}
\node at (1,-1.5) {$d_2=0$, $\vv_4$ in the opposite direction as $B_1$,  length of $|{\bf P}|$ is $|\vv_1|$};
\node[left] at (0,0) {$O$};
\draw[-latex](0,0)--(0,-1); \node[right] at (0,-1){$B_1$};
\draw[-latex](0,0)--(1.5,0); \node[right] at (1.5,0){$B_2$};
\draw[-latex](0,0)--(0,1); \node[right] at (0,1){$B_3$};
\draw[-latex](0,0)--(0,.6); \node[left] at (0,.6){$\vv_4$};
\begin{scope}[shift={(5,0)}]
\node[left] at (0,1) {$P_1$};
\draw[-latex](0,1)--(0,-1); \node[left] at (0,-1){$P_2=P_3$};
\draw[-latex](0.1,-1)--(0.1,.4); \node[right] at (0,.4){$P_4$};
\draw[-latex](.1,.4)--(0.1,1);
\end{scope}
\end{tikzpicture}

\begin{tikzpicture}
\node at (1,-1.5) {$d_2=0$, $\vv_4$ in the same direction as $B_1$, length of $|{\bf P}|$ is $|\vv_3|$};
\node[left] at (0,0) {$O$};
\draw[-latex](0,0)--(0,-1); \node[right] at (0,-1){$B_1$};
\draw[-latex](0,0)--(1.5,0); \node[right] at (1.5,0){$B_2$};
\draw[-latex](0,0)--(0,1); \node[right] at (0,1){$B_3$};
\draw[-latex](0,0)--(0,-.6); \node[left] at (0,-.6){$\vv_4$};
\begin{scope}[shift={(5,0)}]
\node[left] at (0,1) {$P_4$};
\draw[-latex](0,1)--(0,.4); \node[left] at (0,.4){$P_1$};
\draw[-latex](0,.5)--(0,-1); \node[left] at (0,-1){$P_2=P_3$};
\draw[-latex](.1,-1)--(.1,1);
\end{scope}
\end{tikzpicture}
\caption{(Case 2)}\label{fig:case2}
\end{figure}

(Case 3) Suppose $Q$ is acyclic,  one of $a, b, c$ is zero and the other two have the opposite sign. Then exactly one vertex is adjacent to both the other two, and this vertex is either a sink or a source.

(Case 3a) If this vertex is a sink. Without loss of generality, assume $Q$ is
$1\to 3\leftarrow 2$, that is, $a=0$, $b>0>c$. The vectors  $B_1$ and $B_2$ are in the same direction.
Condition (1a) is satisfied for the two permutations 
 $(i,j,k)=(1,2,3)$ or $(2,1,3)$, and  both satisfy  condition (1c) by Lemma \ref{criterion of circular order}(a). Both cases give the same  $|{\bf P_d}|$, which is triangle with edges $\vv_1+\vv_2,\vv_3,\vv_4$, in that order. 

\begin{figure}[ht]
\begin{tikzpicture}
\node[left] at (0,0) {$O$};
\draw[-latex](0,0)--(0,-1); \node[right] at (0,-1){$B_1$};
\draw[-latex](0,0)--(0,-.6); \node[right] at (0,-.6){$B_2$};
\draw[-latex](0,0)--(1,0); \node[right] at (1,0){$B_3$};
\draw[-latex](0,0)--(-2,1); \node[left] at (-2,1){$\vv_4$};
\begin{scope}[shift={(5,.5)}]
\node[left] at (0,0) {$P_1$};
\draw[-latex](0,0)--(0,-.5); \node[left] at (0,-.5){$P_2$};
\draw[-latex](0,-.5)--(0,-1); \node[left] at (0,-1){$P_3$};
\draw[-latex](0,-1)--(2,-1); \node[right] at (2,-1){$P_4$};
\draw[-latex](2,-1)--(0,0);
\end{scope}
\end{tikzpicture}
\caption{(Case 3a)}\label{fig:case3}
\end{figure}

(Case 3b) If this vertex is a source. Without loss of generality, assume $Q$ is $2 \leftarrow 1\to 3$, that is, $b=0$, $a>0>c$. 
The vectors $B_2$ and $B_3$ are in the same direction.
Condition (1a) is satisfied for the two permutations 
 $(i,j,k)=(1,2,3)$ or $(1,3,2)$, and  both satisfy  condition (1c) by Lemma \ref{criterion of circular order}(a). Both cases give the same  $|{\bf P_d}|$, which is a triangle with edges $\vv_1,\vv_2+\vv_3,\vv_4$, in that order.
 
\begin{figure}[ht]
\begin{tikzpicture}
\node[left] at (0,0) {$O$};
\draw[-latex](0,0)--(0,-1); \node[right] at (0,-1){$B_1$};
\draw[-latex](0,0)--(1,0); \node[right] at (1,0){$B_2$};
\draw[-latex](0,0)--(.5,0); \node[above] at (.5,0){$B_3$};
\draw[-latex](0,0)--(-2,1); \node[left] at (-2,1){$\vv_4$};
\begin{scope}[shift={(5,.5)}]
\node[left] at (0,0) {$P_1$};
\draw[-latex](0,0)--(0,-1); \node[left] at (0,-1){$P_2$};
\draw[-latex](0,-1)--(1,-1); \node[below] at (1,-1){$P_3$};
\draw[-latex](0,-1)--(2,-1); \node[right] at (2,-1){$P_4$};
\draw[-latex](2,-1)--(0,0);
\end{scope}
\end{tikzpicture}
\caption{(Case 3b)}\label{fig:case4}
\end{figure}

(Case 4) Suppose $Q$ is cyclic. Without loss of generality we may assume $a,b,c>0$. Condition (1a)  narrows down the choices of $(i,j,k)$ to $(1,2,3)$, $(2,3,1)$, or $(3,1,2)$. 

\begin{figure}[ht]
\begin{tikzpicture}[scale=0.9]
\node[left] at (0,0) {$O=\vv_4$};
\node[below] at (0,-1.5){$\vv_4=0$};
\draw[-latex](0,0)--(0,-1); \node[right] at (0,-1){$B_1$};
\draw[-latex](0,0)--(1,0); \node[right] at (1,0){$B_2$};
\draw[-latex](0,0)--(-1,1); \node[left] at (-1,1){$B_3$};
\begin{scope}[shift={(4,.5)}]
\node[above] at (0,0) {$P_1=P_4$};
\draw[-latex](0,0)--(0,-1); \node[left] at (0,-1){$P_2$};
\draw[-latex](0,-1)--(1,-1); \node[right] at (1,-1){$P_3$};
\draw[-latex](1,-1)--(0,0);
\node[below] at (0,-2){$(i,j,k)=(1,2,3)$};
\end{scope}
\begin{scope}[shift={(7,.5)}]
\node[above] at (0,0) {$P_3$};
\draw[-latex](0,0)--(0,-1); \node[below] at (0,-1){$P_1=P_4$};
\draw[-latex](0,-1)--(1,-1); \node[right] at (1,-1){$P_2$};
\draw[-latex](1,-1)--(0,0);
\node[below] at (.5,-2){$(i,j,k)=(2,3,1)$};
\end{scope}
\begin{scope}[shift={(10.5,.5)}]
\node[above] at (0,0) {$P_2$};
\draw[-latex](0,0)--(0,-1); \node[left] at (0,-1){$P_3$};
\draw[-latex](0,-1)--(1,-1); \node[below] at (1,-1){$P_1=P_4$};
\draw[-latex](1,-1)--(0,0);
\node[below] at (0.5,-2){$(i,j,k)=(3,1,2)$};
\end{scope}
\end{tikzpicture}
\begin{tikzpicture}[scale=0.8]
\node[left] at (0,0) {$O$};
\node[below] at (0,-1.5){$\vv_4$ between $\vv_1$ and $\vv_2$};
\draw[-latex](0,0)--(0,-1); \node[right] at (0,-1){$B_1$};
\draw[-latex](0,0)--(1,0); \node[right] at (1,0){$B_2$};
\draw[-latex](0,0)--(-1,1); \node[left] at (-1,1){$B_3$};
\draw[-latex](0,0)--(1.5,-.5); \node[left] at (1.5,-1){$\vv_4$};
\begin{scope}[shift={(4,-.5)}]
\node[left] at (0,0) {$P_4$};
\draw[-latex](0,0)--(1.5, -.5); \node[below] at (1.5,-.5){$P_1$};
\draw[-latex](1.5,-.5)--(2.5,-.5); \node[below] at (2.5,-.5){$P_2$};
\draw[-latex](2.5,-.5)--(0,2); \node[left] at (0,2){$P_3$};
\draw[-latex](0,2)--(0,0); 
\node[below] at (1,-1){$(i,j,k)=(2,3,1)$};
\end{scope}
\end{tikzpicture}
\begin{tikzpicture}[scale=0.8]
\node[left] at (0,0) {$O$};
\node[below] at (0,-1.5){$\vv_4$ and $\vv_1$ in the same direction};
\draw[-latex](0,0)--(0,-1); \node[right] at (0,-1){$B_1$};
\draw[-latex](0,0)--(1,0); \node[right] at (1,0){$B_2$};
\draw[-latex](0,0)--(-1,1); \node[left] at (-1,1){$B_3$};
\draw[-latex](0,0)--(0,-.5); \node[left] at (0,-.5){$\vv_4$};
\begin{scope}[shift={(4,-.5)}]
\node[left] at (0,0) {$P_1$};
\draw[-latex](0,0)--(0, -.5); \node[below] at (0,-.5){$P_2$};
\draw[-latex](0,-.5)--(2.5,-.5); \node[below] at (2.5,-.5){$P_3$};
\draw[-latex](2.5,-.5)--(0,2); \node[left] at (0,2){$P_4$};
\draw[-latex](0,2)--(0,0); 
\node[below] at (1,-1){$(i,j,k)=(1,2,3)$};
\end{scope}
\begin{scope}[shift={(8,-.5)}]
\node[left] at (0,1) {$P_4$};
\draw[-latex](0,1)--(0, -.5); \node[below] at (0,-.5){$P_1$};
\draw[-latex](0,-.5)--(2.5,-.5); \node[below] at (2.5,-.5){$P_2$};
\draw[-latex](2.5,-.5)--(0,2); \node[left] at (0,2){$P_3$};
\draw[-latex](0,2)--(0,1); 
\node[below] at (1,-1){$(i,j,k)=(2,3,1)$};
\end{scope}
\end{tikzpicture}
\caption{(Case 4)}\label{fig:case5}
\end{figure}

If $\vv_4=0$, then for all of the above three choices of $(i,j,k)$, we get the same $|{\bf P}|$  which is a triangle with edges $\vv_1,\vv_2,\vv_3$, in that order.

In the following we assume $\vv_4\neq0$. 
If $\vv_4$ is strictly between $\vv_1$ and $\vv_2$ (respectively, $\vv_2$ and $\vv_3$, $\vv_3$ and $\vv_1$), then the circular order condition implies the unique choice $(i,j,k)=(2,3,1)$ (resp. $(3,1,2)$, $(1,2,3)$). If $\vv_4$ is in the same direction as $\vv_1$, then $(i,j,k)=(1,2,3)$ or $(2,3,1)$. But they give the same quadrilateral which degenerates to a (possibly degenerated) triangle with edges $\vv_1+\vv_4,\vv_2,\vv_3$, in that order. Similar argument holds for $\vv_4$ being in the same direction as $\vv_2$ or $\vv_3$.
\end{proof}

\subsection{A substitution lemma}
In the following lemma, we describe the effect of replacing the variable $x_i$ by its mutation $x_i'$.  We use the following notation.
\[\begin{array}{ccc}
\alpha_1= \left[\begin{matrix} -1&0&0 \\ [a']_+&1&0\\ [-c]_+&0&1\end{matrix}\right]
&
\alpha_2= \left[\begin{matrix} 1&[-a]_+&0 \\ 0&-1&0\\ 0&[b']_+&1\end{matrix}\right]
&
\alpha_3= \left[\begin{matrix} 1&0&[c']_+ \\ 0&1&[-b]_+\\ 0&0&-1\end{matrix}\right]
\\ \\
\beta_1= \left[\begin{matrix} -1&0&0 \\ [-a']_+&1&0\\ [c]_+&0&1\end{matrix}\right]
&
\beta_2= \left[\begin{matrix} 1&[a]_+&0 \\ 0&-1&0\\ 0&[-b']_+&1\end{matrix}\right]
&
\beta_3= \left[\begin{matrix} 1&0&[-c']_+ \\ 0&1&[b]_+\\ 0&0&-1\end{matrix}\right]
\\ \\

\end{array}
\]

Recall that a semifield $\mathbb{P}=(\mathbb{P},\oplus,\cdot)$ is an abelian multiplicative group endowed with an auxiliary addition $\oplus:  \mathbb{P}\times\mathbb{P}\to\mathbb{P}$ which is associative, commutative, and $a(b\oplus c)=ab\oplus ac$ for every $a,b,c\in\mathbb{P}$.  Let $\mathbb{ZP}$ be the group ring of $\mathbb{P}$, and $\mathbb{QP}$ the field of fractions of $\mathbb{ZP}$.
\begin{lemma}\label{change of support}
Let ${\bf p}=(p_1,p_2,p_3)$ and $r\ge 0$.
For $i=1,2,3$ let $f_i$ be a Laurent polynomial of the form
\[ f_i= a_0 x^{{\bf p}+b_0 B_i} +a_1 x^{{\bf p}+b_1 B_i} +\cdots +a_n x^{{\bf p}+b_n B_i}
=\sum_{j=0}^n a_j x^{{\bf p}+b_jB_i},
\] 
where $a_0,\dots,a_n\in\mathbb{QP}$, $0=b_0<b_1<\cdots <b_n=r$, so that the exponents in $f_i$ are points on the line segment from ${\bf p}$ to ${\bf p}+rB_i=:{\bf q}$. 

Let $g_i$ be the rational function obtained from $f_i$ by substituting 
\[\begin{array}{c} x_1 \textup{ by } (p^-x_2^{[a']_+}x_3^{[-c]_+}+p^+ x_2^{[-a']_+}x_3^{[c]_+})/x_1 \quad \textup{if $i=1$;}\\
 x_2 \textup{ by }  (p^- x_3^{[b']_+}x_1^{[-a]_+}+p^+ x_3^{[-b']_+}x_1^{[a]_+})/x_2
  \quad \textup{if $i=2$;}\\ 
  x_3 \textup{ by }  (p^- x_1^{[c']_+}x_2^{[-b]_+}+p^+ x_1^{[-c']_+}x_2^{[b]_+})/x_3
  \quad \textup{if $i=3$.}\\
 \end{array}\]
 where $p^-, p^+\in\mathbb{P}$.

If $g_i$ is a Laurent polynomial, then
\[g_i= a'_0 x^{{\bf p}'+b'_0 B_i} +a'_1 x^{{\bf p}'+b'_1 B_i} +\cdots +a'_n x^{{\bf p}'+b'_n B_i}
=\sum_{j=0}^n a'_j x^{{\bf p}'+b'_jB_i},
\] 
where $0=b'_0<b'_1<\cdots <b'_n=r'$, so that the exponents in $g_i$ are points on the line segment from ${\bf p}'$ to ${\bf p}'+r'B_i=:{\bf q}'$.

Moreover  
$r'=r+p_i$, $a'_0=(p^+)^{p_i}a_0$, $a'_{n}=(p^-)^{p_i}a_n$, and
\[{\bf p}'=\alpha_i({\bf p}),
\quad 
{\bf q}'=\beta_i({\bf q}).\]
\end{lemma}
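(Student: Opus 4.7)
The plan is to prove the lemma by direct substitution in the case $i=1$; the cases $i=2,3$ follow by the same argument after cyclic relabeling of the indices in $B_i$, $\alpha_i$, and $\beta_i$. The first step is a convenient algebraic factorization. Writing $M_- = x_2^{[a']_+} x_3^{[-c]_+}$ and $M_+ = x_2^{[-a']_+} x_3^{[c]_+}$, the identities $[a']_+ - [-a']_+ = a'$ and $[-c]_+ - [c]_+ = -c$, together with the fact that the first coordinate of $B_1$ vanishes, give $M_-/M_+ = x_2^{a'}x_3^{-c} = x^{-B_1}$, so the substitution for $x_1$ factors as
\[
x_1 \ \longmapsto\ x_1^{-1} M_+ \bigl(p^- x^{-B_1} + p^+\bigr).
\]
Raising to the power $p_1 \ge 0$ and expanding by the binomial theorem gives
\[
x_1^{p_1} \ \longmapsto\ x_1^{-p_1} M_+^{p_1} \sum_{k=0}^{p_1} \binom{p_1}{k} (p^-)^k (p^+)^{p_1-k} x^{-kB_1}.
\]

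Next, I substitute this into $f_1 = \sum_{j} a_j x^{{\bf p} + b_j B_1}$ and collect terms. Using that the monomial prefactor $x_1^{-p_1} M_+^{p_1} x^{(0,p_2,p_3)}$ equals $x^{\beta_1({\bf p})}$, one obtains
\[
g_1 = x^{\beta_1({\bf p})} \sum_{j=0}^{n} \sum_{k=0}^{p_1} a_j \binom{p_1}{k} (p^-)^k (p^+)^{p_1-k}\, x^{(b_j - k) B_1},
\]
whose exponents are collinear in the direction $B_1$, with $b_j - k$ ranging over $\{-p_1,-p_1+1,\dots,r\}$. A short matrix computation shows that $\alpha_1 B_1 = \beta_1 B_1 = B_1$ (each matrix acts as the identity in coordinates $2$ and $3$, and the first coordinate of $B_1$ is $0$), so
\[
\beta_1({\bf p}) - p_1 B_1 = \alpha_1({\bf p}), \qquad \beta_1({\bf p}) + r B_1 = \beta_1({\bf q}).
\]
Re-indexing the double sum with ${\bf p}' := \alpha_1({\bf p})$ at offset $0$ then yields $r' = r + p_1$ and ${\bf q}' = \beta_1({\bf q})$, and the endpoint coefficients at offsets $0$ and $r'$ come from the unique index pairs $(j,k) = (0,p_1)$ and $(n,0)$, producing the claimed closed forms for $a'_0$ and $a'_n$ as powers of $p^+$ and $p^-$ times $a_0$ and $a_n$.

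The main obstacle I anticipate is the case $p_1 < 0$, where the binomial theorem does not apply literally and $(p^- M_- + p^+ M_+)^{p_1}$ is \emph{a priori} only a rational function in $\mathbb{QP}(x_2,x_3)$. Here the hypothesis that $g_1$ is a Laurent polynomial is essential: it forces divisibility of the appropriate numerator by a power of $(p^- M_- + p^+ M_+)$ in $\mathbb{QP}[x_2^{\pm},x_3^{\pm}]$, and once divisibility is in hand, a formal expansion using the generalized binomial series (with $\binom{p_1}{k}$ for $p_1<0$ and $k\ge 0$) reproduces the same collinear expression for $g_1$, the same endpoint identifications via $\alpha_1 B_1 = \beta_1 B_1 = B_1$, and the same extremal coefficients. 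The cases $i=2,3$ are then handled identically after cyclically permuting indices in the definitions of $B_i$, $\alpha_i$, and $\beta_i$.
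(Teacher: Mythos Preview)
Your approach is essentially the same as the paper's: reduce by symmetry to $i=1$, factor the substitution so that only a power of $x_1$ is affected, and read off the endpoints. Your uniform factorization $x_1\mapsto x_1^{-1}M_+(p^-x^{-B_1}+p^+)$ is in fact a mild improvement over the paper, which instead splits into the four sign cases of $(a,c)$ and simplifies $[a']_+,[c]_+$ separately in each.

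Two small points. First, for $p_1<0$ the phrase ``generalized binomial series'' is not the right justification: that series is infinite and does not literally reproduce a finite Laurent polynomial. What actually works (and what your divisibility remark already sets up) is that the hypothesis ``$g_1$ is Laurent'' forces $(p^-x^{-B_1}+p^+)^{-p_1}$ to divide $\sum_j a_j x^{b_jB_1}$ as a one-variable Laurent polynomial in $u=x^{B_1}$; the quotient then has $u$-degree $r+p_1$ with leading and trailing coefficients $(p^+)^{p_1}a_n$ and $(p^-)^{p_1}a_0$, and multiplying back by $x_1^{-p_1}M_+^{p_1}$ gives the stated ${\bf p}',{\bf q}',r'$. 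The paper's own proof is equally terse here, simply writing $(p^-+p^+x^{B_1})^{p_1}$ without separating the sign of $p_1$.

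Second, carrying out your computation at the extremal pairs $(j,k)=(0,p_1)$ and $(n,0)$ gives $a'_0=(p^-)^{p_1}a_0$ and $a'_n=(p^+)^{p_1}a_n$, which is exactly what the paper's proof obtains; note this is swapped relative to the displayed statement of the lemma, so the statement appears to contain a typo in the roles of $p^+$ and $p^-$.
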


\begin{proof}
By symmetry, it suffices to show the case $i=1$.  We simply write $f,g$ instead of $f_1,g_1$. First assume $a\ge0$ and $ c\le 0$. Then  
\[f=\sum_{j=0}^n a_j x^{{\bf p}+b_jB_1} = x^{\bf p}\sum_{j=0}^n a_j x^{b_jB_1}\]
Note that the last sum does not depend on $x_1$ since $B_1=\left[\begin{smallmatrix} 0\\-a'\\c\end{smallmatrix}\right]$.
Therefore
$$\aligned
&g=\big[x_1^{-1}x_2^{a'}x_3^{-c}(p^-+p^+x^{B_1})\big]^{p_1}x_2^{p_2}x_3^{p_3}\sum_{j=0}^n a_j x^{b_jB_1}
\\
&=x_1^{-p_1}x_2^{p_2+ap_1}x_3^{p_3-cp_1}(p^-+p^+x^{B_1})^{p_1}\sum_{j=0}^n a_j x^{b_jB_1}
\\
&=x_1^{-p_1}x_2^{p_2+ap_1}x_3^{p_3-cp_1}\big((p^-)^{p_1}a_0+
\textup{$\cdots$ terms of intermediate degree $\cdots$} \\
&\phantom{X}+(p^+)^{p_1}a_n x^{(p_1+r)B_1}\big).\\
\endaligned
$$
So $r'=r+p_1$, $a'_0=(p^-)^{p_1}a_0$, $a'_{n'}=(p^+)^{p_1}a_n$, and ${\bf p}'=(-p_1,p_2+a'p_1,p_3-cp_1)=\alpha_1({\bf p})$. Moreover, since ${\bf q}={\bf p}+rB_1$, 
 we have ${\bf q}'=(-p_1,p_2+a'p_1,p_3-cp_1)+(p_1+r)(0,-a',c)=(-p_1,p_2-a'r,p_3+cr)=(-q_1,q_2,q_3)=\beta_1({\bf q})$. This completes the proof in the case  $a\ge0$ and $ c\le 0$.

The remaining cases ``$c\ge 0, a\le 0$", ``$a,c\le0$", ``$a,c\ge 0$'' are proved similarly.
\end{proof}

\subsection{A Newton polytope change lemma}
The following lemma describes how the Newton polytopes  change under mutation. It will be used in the proof of the support condition in our main result Theorem \ref{main theorem}.

\begin{lemma}\label{lemma:support change}
Assume that $F$ is a Laurent polynomial in $x_1,x_2,x_3$, and after substituting 
\begin{equation}\label{eq:sub x1}
x_1\mapsto (p^+x_2^{[a']_+}x_3^{[-c]_+}+p^-x_2^{[-a']_+}x_3^{[c]_+})/x_1'
\end{equation} in $F$, we get a Laurent polynomial $G$ in $x_1',x_2,x_3$.
Assume the Newton polytope $R$ of a Laurent polynomial $F$ lies in a plane $S$ parallel to the plane 
$\textrm{span}(B_1,B_2,B_3)$, and $R$ satisfies the following condition:

$R$ is bounded by two (possibly length zero) line segments ${\bf q}_1{\bf q}_2$, ${\bf q}_3{\bf q}_4$, as well as  an ``inflow'' boundary $T_{in}$ joining ${\bf q}_1$ and ${\bf q}_3$ and   an ``outflow'' boundary $T_{out}$ joining ${\bf q}_2$ and ${\bf q}_4$. Each line $\ell$ parallel to $B_1$ intersects at most once with $T_{in}$ and at most once with $T_{out}$, and $\ell$ intersects $T_{in}$ if and only if it intersects $T_{out}$.  So it induces a bijection $\phi: T_{in}\to T_{out}$. Moreover, $\phi({\bf q}_1)={\bf q}_2$, $\phi({\bf q}_3)={\bf q}_4$, and for all $t\in T_{in}$, $\phi(t)\in t+\mathbb{R}_{\ge0}B_1$. 

Now define \[{\bf q}_1'=\beta_1({\bf q}_2), \quad {\bf q}_2'=\alpha_1({\bf q}_1),
\quad  {\bf q}_3'=\beta_1({\bf q}_4),
\quad  {\bf q}_4'=\alpha_1({\bf q}_3), \]
\[ T'_{out}=\alpha_1(T_{in}), 
\quad T'_{in}=\beta_1(T_{out}).\] 

Then the convex hull of the support of $G$  is the region $R'\subset S'$, where $S'$ is a plane parallel to $\textrm{span}(B_1', B_2',B_3')$, such that the following condition holds:

$R'$ is bounded by two line segments ${\bf q}_1'{\bf q}_2'$, ${\bf q}_3'{\bf q}_4'$, as well as  an ``inflow'' boundary $T'_{in}$ joining ${\bf q}_1'$ and ${\bf q}_3'$ and   an ``outflow'' boundary $T'_{out}$ joining ${\bf q}_2'$ and ${\bf q}_4'$. Each line $\ell$ parallel to $B_1'$ intersect at most once with $T'_{in}$ and at most once with $T'_{out}$, and it intersects $T'_{in}$ if and only if it intersects $T'_{out}$.  So it induces a bijection $\phi': T'_{in}\to T'_{out}$. Moreover, $\phi'({\bf q}'_1)={\bf q}'_2$, $\phi({\bf q}'_3)={\bf q}'_4$, and for all $t\in T'_{in}$, $\phi'(t)\in t+\mathbb{R}_{\ge0}B'_1$. 
\begin{figure}[ht]
\begin{tikzpicture}
\draw [->](-2,2) -- (-2,0);\node[left] at (-2,1) {$B_1$};

\draw (-1,3-.5) -- (-1,1) (1,3-.5)--(1,-1);
\draw[scale=1,ultra thick,domain=-1:1,smooth,variable=\x,blue] plot ({\x},{3-.5*\x*\x});
\draw[scale=1,ultra thick,domain=-1:1,smooth,variable=\x,red]  plot ({\x},{\x*\x-\x-1});
\node[left] at (-1,3-.5) {${\bf q}_1$};
\node[left] at (-1,1) {${\bf q}_2$};
\node[right] at (1,3-.5) {${\bf q}_3$};
\node[right] at (1,-1) {${\bf q}_4$};
\node[above] at (0,3) {$T_{in}$};
\node[below left] at (0,-.7) {$T_{out}$};
\draw[very thick](0,2) -- (0,0); \node[ right] at (0,2){${\bf p}$};  \node[right] at (0,0){${\bf q}$};
\draw[dotted, very thick] (0,2) -- (0,3) (0,0)--(0,-1); \node[below right] at (0,3){${\bf p}_0$};  \node[above right] at (0,-1.2){${\bf q}_0$};
\begin{scope}[shift={(5,0)}]
\draw (-1,3-.5-1) -- (-1,1-2-.5) (1,3-.5+1)--(1,1+2-.5);
\draw[scale=1,ultra thick,domain=-1:1,smooth,variable=\x,blue] plot ({\x},{3-.5*\x*\x+\x});
\draw[scale=1,ultra thick,domain=-1:1,smooth,variable=\x,red]  plot ({\x},{\x*\x+2*\x-.5});
\node[left] at (-1,3-.5-1) {${\bf q}_2'$};
\node[left] at (-1,1-2-.5) {${\bf q}_1'$};
\node[right] at (1,3-.5+1) {${\bf q}_4'$};
\node[right] at (1,1+2-.5) {${\bf q}_3'$};
\node[left] at (0,3.2) {$T'_{out}$};
\node[right] at (0.5,0) {$T'_{in}$};
\draw[ultra thick] (0,2) -- (0,0.5); \node[right] at (0,2){${\bf p}'$};  \node[left] at (0,.5){${\bf q}'$};
\draw[dotted, very thick] (0,2) -- (0,3) (0,0.5)--(0,-0.5); \node[right] at (0,3){${\bf p}'_0$};  \node[left] at (0,-0.5){${\bf q}'_0$};
\end{scope}
\end{tikzpicture}
\caption{$R$ and $R'$}\label{R and R'}
\end{figure}
\end{lemma}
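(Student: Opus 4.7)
The plan is to foliate $R$ into line segments parallel to $B_1$, apply Lemma~\ref{change of support} slice by slice, and reassemble the images into $R'$. For each line $\ell\subset\mathbb{R}^3$ parallel to $B_1$ that meets $R$, set ${\bf p}_\ell:=\ell\cap T_{in}$ and ${\bf q}_\ell:=\phi({\bf p}_\ell)=\ell\cap T_{out}$, so ${\bf q}_\ell={\bf p}_\ell+r_\ell B_1$ with $r_\ell\ge 0$. Grouping the monomials of $F$ by the line on which they lie gives a decomposition $F=\sum_\ell F_\ell$, with each piece exactly of the form treated in Lemma~\ref{change of support} (with ${\bf p},{\bf q},r$ replaced by ${\bf p}_\ell,{\bf q}_\ell,r_\ell$).

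A short calculation shows $\alpha_1 B_1=\beta_1 B_1=B_1$; since $\alpha_1,\beta_1$ are $\mathbb{Z}$-linear bijections fixing the vector $B_1$, they send $B_1$-parallel lines to $B_1$-parallel lines and distinct parallel lines to distinct ones. Hence the image $g_\ell$ of each $F_\ell$ under the substitution (a priori a rational function) lives on a $B_1$-line in the target, and different $g_\ell$'s live on disjoint $B_1$-lines. Because no cross-cancellation between them is possible, the hypothesis that $G=\sum_\ell g_\ell$ is a Laurent polynomial forces each $g_\ell$ individually to be a Laurent polynomial. Lemma~\ref{change of support} then applies directly to every slice and gives that $g_\ell$ is supported on the segment from $\alpha_1({\bf p}_\ell)$ to $\beta_1({\bf q}_\ell)$, with nonzero coefficients at both endpoints, and $\beta_1({\bf q}_\ell)-\alpha_1({\bf p}_\ell)=(r_\ell+p_{\ell,1})B_1$. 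To locate the ambient plane, one checks using the $B$-matrix mutation formulas that $\alpha_1(B_i)\in\mathrm{span}(B_1',B_2',B_3')$ for $i=1,2,3$, and that $(\alpha_1-\beta_1)({\bf v})=-v_1B_1\in\mathrm{span}(B_1',B_2',B_3')$; hence $\alpha_1(S)=\beta_1(S)=:S'$ is a single plane parallel to $\mathrm{span}(B_1',B_2',B_3')$.

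Reassembling, the support of $G$ is the disjoint union of the supports of the $g_\ell$, so its convex hull is $R'=\bigcup_\ell[\alpha_1({\bf p}_\ell),\beta_1({\bf q}_\ell)]\subset S'$, bounded by $T'_{out}=\alpha_1(T_{in})$, $T'_{in}=\beta_1(T_{out})$, and the two extreme-slice images ${\bf q}_1'{\bf q}_2'$ and ${\bf q}_3'{\bf q}_4'$ arising from the boundary slices ${\bf q}_1{\bf q}_2$ and ${\bf q}_3{\bf q}_4$ of $R$. The bijection $\phi':T'_{in}\to T'_{out}$ sending $\beta_1({\bf q}_\ell)\mapsto\alpha_1({\bf p}_\ell)$ then satisfies $\phi'(t)-t=(r_\ell+p_{\ell,1})B_1'$, matching the direction condition provided $r_\ell+p_{\ell,1}\ge 0$ for every slice. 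I expect the main obstacle to be justifying this non-negativity: when $p_{\ell,1}<0$, it requires $(p^-+p^+x^{B_1})^{-p_{\ell,1}}$ to divide the univariate slice polynomial $\sum_j a_j x^{b_j B_1}$, a divisibility that is precisely what the Laurent-polynomial hypothesis on $G$ (combined with the disjoint-lines argument above) enforces. A secondary bookkeeping issue is the treatment of slices whose boundary endpoints ${\bf p}_\ell,{\bf q}_\ell$ are not lattice points: there one replaces them with the true lattice endpoints of $F_\ell$ and takes convex envelopes over $\ell$, which still recover $T'_{in},T'_{out}$.
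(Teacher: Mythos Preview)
Your approach is essentially the same as the paper's: both slice $F$ by lines parallel to $B_1$ (equivalently, by the value of the first coordinate, since $B_1$ has first entry $0$), apply Lemma~\ref{change of support} to each slice, and reassemble. The paper phrases the slice as $F_t=\sum e(t,p_2,p_3)x_1^t x_2^{p_2}x_3^{p_3}$ with image $G_{-t}$, and then compares the actual support endpoints ${\bf p},{\bf q}$ of $F_t$ against the $R$-boundary points ${\bf p}_0,{\bf q}_0$ on that line, showing the containment is preserved (an ``iff'' via $\alpha_1({\bf p})-\alpha_1({\bf p}_0)={\bf p}-{\bf p}_0$ since $\alpha_1$ fixes $B_1$). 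Your writeup is more explicit than the paper's on two points the paper leaves implicit: the disjoint-lines argument forcing each $g_\ell$ to be individually Laurent, and the nonnegativity $r_\ell+p_{\ell,1}\ge0$ following from that. One minor wrinkle: you initially take ${\bf p}_\ell,{\bf q}_\ell$ on the boundary of $R$ rather than as the true support endpoints of $F_\ell$, so your claim ``with nonzero coefficients at both endpoints'' is not literally correct for generic slices; your closing remark about replacing them by the true lattice endpoints is exactly the fix, and corresponds to the paper's distinction between ${\bf p},{\bf q}$ and ${\bf p}_0,{\bf q}_0$.
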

\begin{proof}
First note that by the linearity of $\alpha_1$ and $\beta_1$, $R$ is convex if and only if $R'$ is convex.

Denote $F=\sum e(p_1,p_2,p_3)x_1^{p_1}x_2^{p_2}x_3^{p_3}$. For a fixed integer $t$, let 
$$F_t=\sum e(t,p_2,p_3)x_1^{t}x_2^{p_2}x_3^{p_3}=a_0x^{\bf p}+\cdots +a_nx^{\bf q}, \quad {\bf q}={\bf p}+rB_1$$ 
and let 
$$G_{-t}=a'_0x^{{\bf p}'}+\cdots +a_{n'}x^{{\bf q}'}, \quad {\bf q}'={\bf p}'+r'B_1$$ 
 be obtained from $F_t$ by substitution \eqref{eq:sub x1}. It suffices to show that if the support of $F_t$ (which is the segment ${\bf pq}$) is in $R$ if and only if the support of $G_{-t}$ (which is the segment ${\bf p'q'}$)  is in $R'$. See Figure \ref{R and R'}.

Assume the line through ${\bf p}$ parallel to $B_1$ intersects with $T_{in}$ and $T_{out}$ at ${\bf p}_0$ and ${\bf q}_0$, respectively.
Assume the line through ${\bf p}'$ parallel to $B_1$ intersects with $T'_{out}$ and $T'_{in}$ at ${\bf p}'_0$ and ${\bf q}'_0$, respectively. Then 
$$\alpha_1({\bf p})={\bf p'}, \alpha_1({\bf p}_0)={\bf p}'_0, \beta_1({\bf q})={\bf q'}, \beta_1({\bf q}_0)={\bf q}'_0.$$ 
 It suffices to show 

(i) ${\bf p}\in{\bf p}_0+\mathbb{R}_{\ge0}B_1$ if and only if ${\bf p}'\in{\bf p}'_0+\mathbb{R}_{\ge0}B_1$. 

(ii) ${\bf q}\in{\bf q}_0+\mathbb{R}_{\le0}B_1$ if and only if ${\bf q}'\in{\bf q}'_0+\mathbb{R}_{\le0}B_1$.

\noindent Indeed, For (i): since $\alpha_1$ is linear and fixes $B_1$, and ${\bf p}'-{\bf p}'_0\in\mathbb{R}B_1$, we see that ${\bf p}'-{\bf p}_0'=\alpha_1({\bf p}-{\bf p}_0)={\bf p}-{\bf p}_0$. This implies (i). 
And (ii) can be proved similarly. 
\end{proof}

\section{Denominator vectors of non-initial cluster variables are non-negative} \label{sect den}

If $f$ is an element of the ambient field we shall use the notation $f|_t$ for the  expansion of  $f$ in the variables in the seed $\Sigma_t$. For $t=t_0$, we  simply denote $f|_{t_0}$ by $f$.

Recall that the $d$-vector of a cluster variable $z$ is ${\bf d}\in\mathbb{Z}^n$ such that $$z=\frac{N(x_1,\dots,x_n)}{x^{\bf d}}$$ where $N(x_1,\dots,x_n)$ is a polynomial with coefficients in $\mathbb{Z}[y_i^{\pm}]$ which is not divisible by any cluster variable $x_i$ $(1\le i\le n)$. Equivalently, we can describe ${\bf d}$ as follows. Write $z$ as a sum of Laurent monomials as $z=\sum_{{\bf p}\in \mathbb{Z}^n} e({\bf p})x^{\bf p}$, and define the \emph{support} of $z$ as the set 
$$\textup{supp}(z)=\{{\bf p} \,|\, e({\bf p})\neq 0\}.$$
Let $P_1,\dots,P_m$ be the vertices of the convex hull of $\textup{supp}(z)$. Then 
\begin{equation}\label{min formular for d-vector}
{\bf d}=-\minvec \{{\bf p}\mid {{\bf p}\in \textup{supp}(z)}\}=-\minvec(P_1,\dots,P_m)
\end{equation}

It was conjectured in \cite[Conjecture 7.4 (1)]{ClusteralgebraIV} that the d-vector of any non-initial cluster variable is nonnegative, and this conjecture was proved recently in \cite{CL} 
 using positivity. Below, we give an alternative short proof by an elementary argument.
\begin{theorem}\label{d-vector positive}  
Let $\mathcal{A}$ be a skew-symmetrizable cluster algebra of arbitrary rank.
The d-vector of any non-initial cluster variable is nonnegative.
\end{theorem}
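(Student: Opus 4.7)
The plan is to proceed by strong induction on the minimum $s\ge 1$ such that $z=x_\ell^{(t)}$ for some seed $t$ at distance $s$ from $t_0$. Since $z$ is non-initial, such an $s$ exists, and on any shortest path from $t_0$ to $t$ the last mutation must be in direction $\ell$; otherwise $z$ would already appear at distance $s-1$, contradicting minimality.

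For the base case $s=1$, writing $t=\mu_\ell(t_0)$ and unpacking the exchange relation gives
\[
z \;=\; x_\ell^{-1}\Bigl(y_\ell\prod_{j=1}^n x_j^{[b_{j\ell}]_+}+\prod_{j=1}^n x_j^{[-b_{j\ell}]_+}\Bigr),
\]
so the $d$-vector of $z$ is the standard basis vector $\mathbf{e}_\ell$, which is non-negative.

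For the inductive step, let $t'$ be the adjacent seed on a shortest path from $t_0$ to $t$, so $t=\mu_\ell(t')$ and $d(t_0,t')=s-1$. The exchange relation in $t'$ reads $z\cdot x_\ell^{(t')}=M^++M^-$, where $M^\pm=y^{[\pm\mathbf{c}_\ell^{(t')}]_+}\prod_{j\neq\ell}\bigl(x_j^{(t')}\bigr)^{[\pm b_{j\ell}^{(t')}]_+}$. Fix an index $i\in\{1,\ldots,n\}$ and set $\phi_j:=-d_i(x_j^{(t')})$, the minimum exponent of $x_i$ in the Laurent expansion of $x_j^{(t')}$ in the initial seed. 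By the positivity theorem \cite{LS,GHKK}, these expansions have non-negative coefficients, so the minimum $x_i$-exponent is additive on products and equals the minimum on sums. Applying this to the exchange relation yields
\[
-d_i(z)\;=\;\min\!\Bigl(\sum_{j\neq\ell}[b_{j\ell}^{(t')}]_+\phi_j,\ \sum_{j\neq\ell}[-b_{j\ell}^{(t')}]_+\phi_j\Bigr)-\phi_\ell.
\]
By the inductive hypothesis applied to the cluster variables in $\mathbf{x}(t')$, we have $\phi_j\in\{0,1\}$ whenever $x_j^{(t')}$ coincides with one of $x_1,\ldots,x_n$ (with $\phi_j=1$ iff $x_j^{(t')}=x_i$), and $\phi_j\le 0$ whenever $x_j^{(t')}$ is non-initial. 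Thus $d_i(z)\ge 0$ is equivalent to the tropical inequality
\[
\min\!\Bigl(\sum_{j\neq\ell}[b_{j\ell}^{(t')}]_+\phi_j,\ \sum_{j\neq\ell}[-b_{j\ell}^{(t')}]_+\phi_j\Bigr)\;\le\;\phi_\ell. \qquad(\star)
\]

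The cases with $\phi_\ell\ge 0$, that is, $x_\ell^{(t')}$ equal to an initial variable, are handled immediately: since $\mathbf{x}(t')$ is a cluster, at most one $j_0\ne\ell$ satisfies $x_{j_0}^{(t')}=x_i$, and then both sums on the left of $(\star)$ are bounded above by $[\pm b_{j_0\ell}^{(t')}]_+$, of which at least one is zero. \textbf{The main obstacle is the remaining case, in which $x_\ell^{(t')}$ is non-initial with $\phi_\ell<0$.} In this regime, $x_\ell^{(t')}$ itself arises from an exchange relation at a still earlier seed, and I expect that iterating this structure together with the sign-coherence of $c$-vectors (Theorem~\ref{Sign-coherence of $c$-vectors}) forces the negative contribution to $\phi_\ell$ to propagate into at least one of the $\phi_j$ for $j\ne\ell$, which is precisely what $(\star)$ demands.
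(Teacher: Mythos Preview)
Your proposal sets up a reasonable inductive framework but stops precisely at the point where the argument acquires content. The case $\phi_\ell<0$---where $x_\ell^{(t')}$ is a non-initial cluster variable with $d_i(x_\ell^{(t')})>0$---is not a residual corner case but the heart of the theorem, and ``I expect that iterating this structure together with sign-coherence forces\ldots'' is not a proof. Concretely, $(\star)$ in this regime asserts that at least one of the weighted sums $\sum_{j\ne\ell}[\pm b_{j\ell}^{(t')}]_+\phi_j$ is \emph{at least as negative} as $\phi_\ell$. Nothing in your induction hypothesis gives quantitative control over how the negativity of $\phi_\ell$ is reflected in the neighbouring $\phi_j$: knowing only that each non-initial $\phi_j\le 0$ says nothing about their sizes relative to $\phi_\ell$, and sign-coherence of $c$-vectors constrains the coefficient part of the exchange, not the $d$-vectors of the variables in $\mathbf{x}(t')$. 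The positivity-based route you are sketching is in the spirit of Cao--Li \cite{CL}, who do complete an argument along these lines, but closing the induction requires substantially more than what you have assembled here.

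The paper's proof is entirely different and non-inductive, and avoids the positivity theorem. After reducing to principal coefficients, one assumes $d_k<0$ for some non-initial $x[\mathbf{d}]$ and writes $x[\mathbf{d}]=x_k^{-d_k}f$. A direct check shows $f$ lies in the upper bound at $t_0$, hence (since $\tilde B$ has full rank) in the upper cluster algebra. Passing to a seed $\Sigma_t$ containing $x[\mathbf{d}]=x'_\ell$, the factorisation becomes $x'_\ell=(x_k|_t)^{-d_k}(f|_t)$ with both factors Laurent. A separately proved lemma (Lemma~\ref{no need for positivity}) shows that a cluster variable whose Laurent expansion is a single monomial must be initial; this forces $x_k|_t=x'_i$ for some $i$, and a short argument in the upper cluster algebra then yields $i=\ell$, $d_k=-1$, and $f=1$, so $x[\mathbf{d}]=x_k$ is initial after all---a contradiction.
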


For the proof of the theorem we need the following lemma, where we assume principal coefficients. Note that the proof of the lemma does not rely on the positivity of cluster variables.

 \begin{lemma}\label{no need for positivity}
Let $\mathcal{A}$ be a skew-symmetrizable cluster algebra with $n$ mutable variables and $m-n$ frozen variables, and assume that it has principal coefficients at the initial seed (that is $m= 2n$ and the lower half of the extended exchange matrix $\tilde{B}$ at the initial seed is the $n\times n$ identity matrix).
If a cluster variable is a Laurent monomial, that is, of the form $cx^{\tilde{\bf a}}$, where $c\in\mathbb{Q}\setminus\{0\}$ and  ${\tilde{\bf a}}=(a_1,\ldots,a_{m})$,  then

(1)   $a_1,\dots,a_{n}$ are all nonnegative. 

(2)  $c=1$ and ${\tilde{\bf a}}=e_i =(0,\ldots,0,1,0,\ldots,0)$ (with $1$ at the $i$-th coordinate) for some $1\le i\le n$. 

\noindent As a conclusion, the Laurent expansion  (in $\mathbb{Z}[x_1^{\pm},\dots,x_{m}^{\pm}]$) of a non-initial cluster variable has more than one term.
\end{lemma}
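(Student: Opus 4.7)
My plan is to prove the lemma in three stages. First, I invoke the separation formula of Fomin--Zelevinsky to express the cluster variable $z$ as $z = x^{{\bf g}_z}F_z(\hat y_1,\ldots,\hat y_n)$, where ${\bf g}_z$ is the $g$-vector, $F_z\in\mathbb{Z}[y_1,\ldots,y_n]$ is the $F$-polynomial, and $\hat y_j = x_{n+j}\prod_i x_i^{b_{ij}}$. Because the principal coefficient matrix is the identity, distinct exponent vectors $\mathbf{p}$ in the monomial expansion of $F_z(\hat y)$ produce distinct coefficient-variable signatures $\prod_j x_{n+j}^{p_j}$; hence $z$ being a Laurent monomial forces $F_z$ itself to be a single monomial, and combining with Lemma \ref{F-polynomial 1} (constant term $1$) yields $F_z = 1$. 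Thus $z = x^{{\bf g}_z}$, which immediately gives $c=1$, $\tilde{\bf a} = ({\bf g}_z, 0,\ldots,0)$, and in particular $a_{n+1}=\cdots=a_{2n}=0$.

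Next, to establish the nonnegativity assertion (1), I mutate the initial seed at each index $i\in\{1,\ldots,n\}$. The variable $x_i$ becomes $(M^+ + M^-)/x_i'$ where $M^{\pm}=\prod_{j=1}^{2n} x_j^{[\pm b_{ji}]_+}$ are distinct monomials (distinct because $x_{n+i}$ appears in $M^+$ but not in $M^-$, owing to the identity block in $\tilde B$). Substituting into $z=x^{{\bf g}_z}$ produces a factor $(M^+ + M^-)^{g_{z,i}}$; if $g_{z,i}<0$, the resulting genuine-binomial denominator cannot divide any monomial, contradicting the Laurent phenomenon. Hence every $g_{z,i}\ge 0$, which is exactly (1).

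For the sharper statement (2) that ${\bf g}_z = e_i$ for some $i$, I argue by contradiction. The case ${\bf g}_z = \mathbf{0}$ is immediate, since $z=1$ would force the cluster containing $z$ to be algebraically dependent over the coefficient ring. For the case $\sum g_{z,i} \ge 2$, I choose a seed $t$ with $z = x_k(t)$ minimizing the distance to $t_0$; by minimality the last mutation is at $k$, yielding the identity $z\cdot x_k(t') = M_+ + M_-$ in seed $t'$, where $t=\mu_k(t')$. Sign-coherence (Theorem \ref{Sign-coherence of $c$-vectors}) together with Lemma \ref{C-matrix det} ensures that $M_+$ and $M_-$ have a nontrivial ratio in the coefficient variables; then expanding both sides in the initial cluster and tracking the coefficient-variable content forces $z$ to be initial, contradicting the case assumption. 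The main obstacle will be executing this last step cleanly, without invoking the non-negativity of $d$-vectors (Theorem \ref{d-vector positive}, whose proof depends on this very lemma) and without appealing to the deep fact that cluster variables are determined by their $g$-vectors. The tools at hand---sign-coherence, non-vanishing of $c$-vectors, and bookkeeping of frozen-variable exponents---should be enough to close the argument, but the refinement from ``${\bf g}_z$ nonnegative'' to ``${\bf g}_z$ is a standard basis vector'' is the subtle point.
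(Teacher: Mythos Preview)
Stages 1 and 2 are correct; Stage 1 (separation formula plus Lemma \ref{F-polynomial 1}) is in fact cleaner than the paper's route, which gets $c=1$ by specializing all variables to $1$ and only obtains $a_{n+1}=\cdots=a_{2n}=0$ at the very end via \cite[Proposition 5.2]{ClusteralgebraIV}. Stage 2 matches the paper's proof of (1).

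Stage 3 is a genuine gap, and your sketch does not close it. The exchange relation $z\cdot x_k(t')=M_++M_-$ at a nearest seed, expanded in the initial cluster and specialized at $y=0$, only recovers the $g$-vector recursion ${\bf g}_z+{\bf g}_{x_k(t')}=\sum_{j\ne k}[-b^{(t')}_{jk}]_+{\bf g}_{x_j(t')}$; this carries no obstruction to $\sum_i g_{z,i}\ge 2$, and tracking frozen content more finely merely reproduces the $F$-polynomial recursion. A single exchange relation simply does not see the global constraint that ${\bf g}_z$ be a unit vector. The paper instead runs the argument in the \emph{reverse} direction: pick any seed $\Sigma_t=\{x'_1,\ldots,x'_n\}$ with $z=x'_1$, and expand each \emph{initial} variable $x_i$ in that cluster as $f_i$, so that $x'_1=\prod_i f_i^{a_i}\prod_j x_{n+j}^{a_{n+j}}$. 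Since the left side is a single variable, every $f_i$ with $a_i>0$ is forced to be a Laurent monomial; applying part (1) with $\Sigma_t$ as initial seed (the extended matrix there still has no zero column, by Lemma \ref{C-matrix det}) gives $f_i=x'^{\tilde{\bf b}^{(i)}}$ with mutable part ${\bf b}^{(i)}\in\mathbb{Z}_{\ge0}^n\setminus\{0\}$, and matching mutable exponents yields $e_1=\sum_{a_i>0}a_i{\bf b}^{(i)}$ in $\mathbb{Z}_{\ge0}^n$, forcing exactly one $a_i=1$ and the rest zero. This ``express the single cluster variable as a product of the initial $x_i$, hence force each $x_i$ appearing to be a monomial in the new cluster, then count'' is the idea your Stage 3 is missing.
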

\begin{proof}
(1)  If  false, we  assume without loss of generality that  $a_1<0$. Then expanding $cx^{\tilde{\bf a}}$ in the seed
 $\mu_1(\Sigma_{t_0})$, we get $x_1'^{-a_1}cx^{(0,a_2,\dots,a_{m})}/(M_1+M_2)^{-a_1}$, for some monomials $M_1\neq M_2$ (note that the inequality follows from the fact that $\tilde{B}$ does not have any zero column since its lower half is an identity matrix), and this cannot be a Laurent polynomial.

(2) 
 Apparently $c$ is a nonzero integer,  by the Laurent phenomenon \cite{fz-ClusterI}.  Since all cluster variables can be written as subtraction-free expressions, by specializing the initial variables $x_1=\cdots=x_{m}=1$, we see that $c$ is positive.  
Next,  choose any  seed $\Sigma_t$ that contains the cluster variable $cx^{\tilde{\bf a}}$; denote the cluster of this seed by $\{x_1'(=cx^{\tilde{\bf a}}),x_2',\dots,x_n'\}$. For $i=1,\dots,n$, let $f_i$ be the Laurent expansion of $x_i$ in  $\{x_1'(=cx^{\tilde{\bf a}}),x_2',\dots,x_n',x_{n+1},\dots,x_{m}\}$  (so $f_i=x_i$, only  written in the Laurent expansion form to remind us). Then 
\begin{equation}\label{x_1'=}
x_1'=cx^{\tilde{\bf a}}= cf_1^{a_1}\cdots f_n^{a_n}x_{n+1}^{a_{n+1}}\cdots x_{m}^{a_{m}}
\end{equation} This has the following two consequences.

(a) $c=1$. Indeed, substituting $x_1'=\cdots=x_{n}'=x_{n+1}=\dots=x_{m}=1$ in \eqref{x_1'=}, we get $1=c\prod_{i=1}^n f_i^{a_i}(x_1'=\cdots=x_n'=x_{n+1}=\cdots=x_{m}=1)$. Since all factors in the right hand side are positive integers, we must have $c=1$, and  $f_i(x_1'=\cdots=x_n'=x_{n+1}=\cdots=x_{m}=1)=1$ for every $1\le i\le n$ with $a_i>0$.

(b)  For every $1\le i\le n$ such that $a_i> 0$, the Laurent expansion $f_i$ must be a Laurent monomial in $\mathbb{Z}[(x'_1)^{\pm},\dots,(x'_n)^{\pm},x_{n+1}^{\pm},\dots,x_{m}^{\pm}]$ with coefficient 1. To see this, first observe that this Laurent expansion cannot have more than one term, otherwise the right hand side of \eqref{x_1'=} must have more than one term, so cannot equal to $x_1'$, a contradiction. So we can write $f_i=ux'^{{\tilde{\bf b}}^{(i)}}$ for some $u\in\mathbb{Z}$ and ${\tilde{\bf b}}^{(i)}\in\mathbb{Z}^{m}$.  Next, since $f_i(x_1'=\cdots=x_n'=x_{n+1}=\cdots=x_{m}=1)=1$ as in the proof of (a), we must have $u=1$. This proves (b).

Now combine (b) and part (1),  we see that for every $1\le i\le n$ such that $a_i>0$, we must have 
$f_i=x'^{\tilde{\bf b}^{(i)}}$, 
where 
 $\tilde{\bf b}^{(i)}\in\mathbb{Z}_{\ge0}^n\times\mathbb{Z}^{n-m}$.
  Thus 
\begin{equation}\label{x1'}
x'_1=f_1^{a_1}\cdots f_n^{a_n} x_{n+1}^{a_{n+1}}\cdots x_{m}^{a_{m}} =\Big(\prod_{\substack{1\le i\le n\\ a_i>0}} x'^{{(a_i\tilde{\bf b}}^{(i)})}\Big) x_{n+1}^{a_{n+1}}\cdots x_{m}^{a_{m}}.
\end{equation}
 Since $x'_1,\dots,x'_{n},x_{n+1},\dots,x_{m}$ are algebraically independent, the exponents on both sides of \eqref{x1'} must match, that is,
$$(1,0,\ldots,0)=(\sum_{\substack{1\le i\le n\\ a_i>0}} a_i\tilde{\bf b}^{(i)})+(0,\dots,0,a_{n+1},\dots,a_{m}).$$ Only looking at the first $n$ coordinates of the above equality, and letting ${\bf b}^{(i)}\in\mathbb{Z}_{\ge0}^n$ be the first $n$ coordinates of $\tilde{\bf b}^{(i)}$, we have
\begin{equation}\label{10...0=sum}
(1,0,\ldots,0)=(\sum_{\substack{1\le i\le n\\ a_i>0}} a_i{\bf b}^{(i)}).
\end{equation}
Next we observe that ${\bf b}^{(i)}\neq(0,\dots,0)$, since otherwise, $x_i=f_i\in \mathbb{Z}[x_{n+1}^{\pm},\dots,x_{m}^{\pm}]$, which contradicts the assumption that $x_1,\dots,x_{m}$ are algebraically independent.  
This observation together with \eqref{10...0=sum} implies that there is exactly one $1\le i\le n$ such that $a_i>0$, and for this $i$ we have $a_i=1$ and ${\bf b}^{(i)}=(1,0,\dots,0)$. Therefore 
$$x'_1=cx^{\tilde{\bf a}}=x_ix_{n+1}^{a_{n+1}}\cdots x_{m}^{a_{m}}.$$ 

Now we use the assumption that the cluster algebra has principal coefficients at the initial seed. Under this assumption, the $F$-polynomial of $x'_1$ is $x_{n+1}^{a_{n+1}}\cdots x_{m}^{a_{m}}$. On the other hand, by \cite[Proposition 5.2]{ClusteralgebraIV}, the $F$-polynomial is not divisible by any of $x_{n+1},\dots,x_{m}$. This forces $a_{n+1}=\cdots=a_{m}=0$, therefore $x_1'=x_i$ is indeed an initial cluster variable, and  ${\tilde{\bf a}}=e_i$. 
\end{proof}

\begin{remark} (a) For skew-symmetric cluster algebras, this lemma was proved in \cite[Lemma 3.7]{CKLP}. (Although the lemma in that paper is stated only for coefficient-free case, the statement extends to arbitrary coefficients by their Corollary 5.3, which states that a skew-symmetric cluster algebra with arbitrary coefficients has the proper Laurent monomial property.) 

(b) In fact, Lemma \ref{no need for positivity} holds for any skew-symmetrizable cluster algebras with arbitrary coefficients of geometric type (so $m$ can be any integer at least $n$), provided that the extended exchange matrix $\tilde{B}$ does not have any zero column. (If the $k$-th column is zero, then $\mu_k(x_k)=2/x_k$ is not an initial cluster variable but is a Laurent monomial, thus the lemma would fail.)

We can prove this generalization using positivity: 
the proof of Lemma \ref{no need for positivity} still holds except the last paragraph. (Note that to prove Lemma \ref{no need for positivity}(1) in the general setting we need the assumption that $\tilde{B}$ does not have any zero column.) Then by the separation formula \cite[Theorem 3.7]{ClusteralgebraIV}, we can write 
\begin{equation}\label{another x1'}
x_ix_{n+1}^{a_{n+1}}\cdots x_{m}^{a_{m}}=x'_1=\frac{X'_1|_{\mathcal{F}}(x_1,\dots,x_n;y_1,\dots,y_n)}{F_\mathbb{P}(y_1,\dots,y_n)}
\end{equation}
where $y_1,\dots,y_n\in\mathbb{P}$, $\mathbb{P}$ is the tropical semifield generated by $x_{n+1},\dots,x_m$,  $F_\mathbb{P}$ is the $F$-polynomial evaluated in $\mathbb{P}$,  and $X'_1$ is the cluster variable corresponding to $x'_1$, but computed using principal coefficients. Since $x'_1=x_ix_{n+1}^{a_{n+1}}\cdots x_{m}^{a_{m}}$ is a Laurent monomial with coefficient 1,  $X'_1$ must also be a monomial with coefficient 1 (indeed, substituting $x_1=\dots=x_m=1$ in \eqref{another x1'},  we get $X'_1(x_1=\cdots=x_n=y_1=\cdots=y_n=1)=1$;  by the positivity of cluster variables proved by \cite{LS,GHKK}, $X'_1$ is a monomial with coefficient 1).  Thus the $F$-polynomial of $X'_1$ is a monomial with coefficient 1. We can then apply \cite[Proposition 5.2]{ClusteralgebraIV} to conclude that this $F$-polynomial is 1. Now letting $x_1=\cdots=x_n=1$ in  \eqref{another x1'}, we get $a_{n+1}=\cdots=a_m=0$. So $x'_1=x_i$ is an initial cluster variable.
\end{remark}

\begin{proof}[Proof of Theorem \ref{d-vector positive}]
By \cite[(7.7)]{ClusteralgebraIV}, d-vectors do not depend on the coefficients. So we assume the cluster algebra $\mathcal{A}$ has principal coefficients, with rank $n$. 

Assume  that $x[{\bf d}]$ is a non-initial cluster variable of $\mathcal{A}$, and $d_k<0$ for some $1\le k\le n$. Write
\begin{equation}\label{factor x}
x[{\bf d}]=x_k^{-d_k} f.
\end{equation}

Throughout the proof we use the notation  $f|_{\mu_i(t_0)}$ for the expansion of $f$ in the cluster obtained from the initial cluster by mutation in direction $i$. In other words,  $f|_{\mu_i(t_0)}$ is obtained from $f$ by replacing $x_i$ by an expression of the form $(M_1+M_2)/x_i$, where $M_1,M_2$ are monomials.
We claim that, $f$ and $f|_{\mu_i(t_0)}$ (for every $i=1,\dots,n$) are Laurent polynomials;  that is, $f$ is in the upper bound $\mathcal{U}$ of $\mathcal{A}$ associated with the initial seed (see \cite[Definition 1.1]{ClusteralgebraIII}).
 Indeed, $f=x_k^{d_k}\cdot x[{\bf d}]$ is Laurent because it is a product of two Laurent polynomials. For the same reason, $f|_{\mu_i(t_0)}=x_k^{d_k}(x[{\bf d}]|_{\mu_i(t_0)})$ is also Laurent for $i\neq k$, and $f|_{\mu_k(t_0)}$ is Laurent because $f$ does not contain negative powers of $x_k$, that is, $f=\sum_{d\ge0} x_k^dh_d$ where $h_d$ is a Laurent polynomial in $x_1,\dots,x_{k-1},x_{k+1},\dots,x_n$, thus substituting $x_k$ by (binomial)/(monomial) still gives a Laurent polynomial. 

Since our cluster algebra has principal coefficients, the matrix $\tilde{B}$ is of full rank, and therefore 
  \cite[Corollary 1.9]{ClusteralgebraIII} implies that the upper bound $\mathcal{U}$ is equal to the upper cluster algebra $\overline{\mathcal A}$. 

Let $\Sigma_t=(x'_1,\dots,x'_n,\tilde{B}')$ be a seed that contains $x[{\bf d}]=x'_\ell$ with $1\le \ell \le n$.  Rewriting \eqref{factor x} using $\Sigma_t$ as the initial seed, we get
\begin{equation}\label{factor x 2nd}
x'_\ell = (x_k|_t)^{-d_k} (f|_t)
\end{equation}
Since $f$ is in the upper cluster algebra $\overline{\mathcal A}$, $f|_t$ is Laurent in $x'_1,\dots,x'_n,x_{n+1},\dots,x_{2n}$. 

We assert that $x_k|_t$ is equal to some $x'_i$. Otherwise, Lemma \ref{no need for positivity} implies that
 the Laurent expansion of $x_k|_t$ in the seed $\Sigma_t$ must have more than one term, then the right hand side of \eqref{factor x 2nd} must have more than one term, so cannot be equal to the left hand side, which leads to a contradiction. 

If $i\neq\ell$, then \eqref{factor x 2nd} gives $f|_t=x'_\ell/(x'_i)^{-d_k}$. But this cannot be in the upper cluster algebra. In fact, even $x'_\ell/x'_i$ is not in the upper cluster algebra, because if we rewrite it using the seed $\mu_i(\Sigma_t)$, then $x'_i$ is replaced by  $(M_1+M_2)/x_{i}''$ whose numerator is some binomial, and it is obvious that $x'_\ell x''_i/(M_1+M_2)$ is not a Laurent polynomial in $\mu_i(t)$. So we get a contradiction.

Therefore $i=\ell$ and  \eqref{factor x 2nd} gives $f|_t=(x'_\ell)^{1+d_k}$, where $d_k<0$. If $d_k\le -2$, then a similar argument as above gives a contradiction. So $d_k=-1$, and $f=1$, thus $x[{\bf d}]=x_k$ is an initial cluster variable, contradicting the assumption.
\end{proof}

\section{Main Theorem}\label{sect 3} 
In this section we state  our main result. It gives a characterization of the cluster variables of an arbitrary rank 3 cluster algebra in terms of support, normalization and divisibility conditions.

\begin{theorem}\label{main theorem} 
Let $\mathcal{A}$ be a cluster algebra of rank 3 with principal coefficients and let $x[{\bf d}]$
 be a cluster variable of $\mathcal{A}$ with d-vector ${\bf d}$. Let ${\bf P_d}$ be a weakly convex quadrilateral constructed in Lemma \ref{quadrilateral}.
 Then

$$x[{\bf d}]=\sum_{{\bf p}\in\mathbb{Z}^3} e({\bf p}) x^{\bf p}=\sum_{p_1,p_2,p_3} e(p_1,p_2,p_3) x_1^{p_1}x_2^{p_2}x_3^{p_3}$$ 
where $e({\bf p})\in\mathbb{Z}[y_1,y_2,y_3]$
is uniquely characterized by the following conditions.
\begin{itemize}
\item[(SC)] (Support condition) 
The coefficient $e({\bf p})=0$ unless ${\bf p}\in {\bf P_d}$. Equivalently, the Newton polytope of $x[{\bf d}]$ is contained in ${\bf P_d}$.  

\item[(NC)] (Normalization condition) There is precisely one $e({\bf p})$ that has a nonzero constant term, which must be 1. Moreover, the greatest common divisor of all $e({\bf p})$ is 1.

\item[(DC)] (Divisibility condition)
For each $k=1,2,3$ and $m<0$,
$$\big(\prod_{i=1}^3 x_i^{[-b_{ik}]_+}+y_k\prod_{i=1}^3 x_i^{[b_{ik}]_+}\big)^{-m}\textrm{ divides }\sum_{{\bf p}\in\mathbb{Z}^3 : p_k=m} e({\bf p}) x^{\bf p},
$$
 in the sense that the quotient is in $\mathbb{Z}[x_1^{\pm},x_2^{\pm},x_3^{\pm},y_k]$.

\end{itemize}

Moreover, (NC) can be replaced by

\begin{itemize}
\item[(NC')] 

There exists a ${\bf p}\in\mathbb{Z}^3$ such that $e({\bf p})$ has a nonzero constant term. Moreover for each vertex
${\bf p}$ of the convex hull $|{\bf P_d}|$, $e({\bf p})$ is a monomial $y_1^{\alpha_1}y_2^{\alpha_2}y_3^{\alpha_3}$ for some $\alpha_i\in \mathbb{Z}_{\ge0}$. 
\end{itemize}

And (SC) can be replaced by the following stronger condition.

\begin{itemize}
\item[(SC')]  
The Newton polytope of $x[{\bf d}]$ (which, by definition, is the convex hull of the set $\{{\bf p}\,|\, e({\bf p})\neq0\}$) is ${\bf P_d}$.
\end{itemize}

\end{theorem}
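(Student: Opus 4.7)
The plan is to establish the theorem in two stages. First I would prove that the cluster variable $x[\mathbf{d}]$ actually satisfies (SC), (NC), (DC), as well as the stronger (SC') and (NC'). Second I would show that any Laurent polynomial satisfying (SC), (NC), (DC) is uniquely determined, thereby forcing it to coincide with $x[\mathbf{d}]$.

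For existence, the support condition (SC) is the heart of the matter, and I would prove it by induction on the length of the shortest mutation path from the initial seed $t_0$ to a seed containing $x[\mathbf{d}]$. The base case (initial cluster variables) is immediate from Lemma \ref{quadrilateral}. For the inductive step, suppose $x[\mathbf{d}]$ lies in a seed $t'=\mu_k(t)$, and its Newton polytope, when expanded in the cluster $\mathbf{x}(t)$, is known to be contained in the weakly convex quadrilateral $\mathbf{P}^{B(t)}_{\mathbf{d}(t)}$. Mutation at $k$ amounts to substituting $x_k$ by the standard binomial-over-$x_k'$ expression, and Lemma \ref{lemma:support change} then describes how the Newton polytope transforms under the piecewise-linear maps $\alpha_k,\beta_k$. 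I would verify that the transformed polytope equals the quadrilateral $\mathbf{P}^{B(t_0)}_{\mathbf{d}}$ prescribed by Lemma \ref{quadrilateral}; this matching is where the careful case analysis of Section \ref{Weakly convex quadrilaterals in rank 3 cluster algebras} (acyclic vs.\ cyclic, direction of $\mathbf{v}_4$, etc.) must be combined with the non-negativity of $d$-vectors from Theorem \ref{d-vector positive}, so that the mutation of $\mathbf{d}$ stays in a regime where the Lemma applies.

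The normalization (NC) and its vertex sharpening (NC') follow from Lemma \ref{F-polynomial 1}: evaluating the Laurent expansion at $y_1=y_2=y_3=0$ gives the $F$-polynomial's constant term, which equals $1$, and this contribution attaches to a single monomial $x^{\mathbf{g}}$ (the $g$-vector monomial); a direct reading of the vertex points $P_1,\dots,P_4$ from the construction in Lemma \ref{quadrilateral}, combined with the tropical evaluation of the $F$-polynomial, identifies the coefficient at each vertex as a pure monomial in $y_1,y_2,y_3$. The divisibility condition (DC) is obtained by expanding $x[\mathbf{d}]$ in the seed $\mu_k(t_0)$: a term with $x_k$-exponent $m<0$ in the original expansion becomes, after substitution, a factor $(x_k')^{-m}$ divided by the $|m|$-th power of the binomial appearing in (DC), and the Laurent phenomenon forces this binomial to divide the corresponding slab.

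For uniqueness, suppose $f=\sum e(\mathbf{p})x^{\mathbf{p}}$ satisfies (SC), (NC), (DC). For each direction $k$, consider the lowest slab $p_k=-d_k$; intersecting this slab with $\mathbf{P_d}$ yields a one-dimensional segment, so (DC) forces the slab-polynomial in $x_1,x_2,x_3$ to be (up to a scalar from $\mathbb{Z}[y_k]$) the prescribed binomial power, and within the segment the support constraint pins it down uniquely. Peeling off slabs and iterating in all three directions, the divisibility conditions propagate through $\mathbf{P_d}$ and leave at most a one-parameter ambiguity, which (NC) removes. The main obstacle will be the combinatorial bookkeeping needed to verify that the three families of binomial divisors --- one per coordinate direction --- together with the polytope $\mathbf{P_d}$ leave no slack; this is where the coplanarity relation $\bar{b}B_1+\bar{c}B_2+\bar{a}B_3=\mathbf{0}$ becomes indispensable, since it guarantees that the divisibility constraints in the three directions are compatible on every interior lattice point of $\mathbf{P_d}$. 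Finally, (SC') follows from (NC'): the four vertex coefficients are nonzero monomials, so the support exhausts the vertices of $|\mathbf{P_d}|$ and the Newton polytope of $x[\mathbf{d}]$ is exactly $\mathbf{P_d}$.
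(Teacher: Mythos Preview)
Your existence outline for (SC), (SC'), and (DC) is essentially the paper's approach: induction on mutation distance using Lemma~\ref{lemma:support change} and the case analysis of \S\ref{Weakly convex quadrilaterals in rank 3 cluster algebras} for the support condition, and the Laurent phenomenon in the seed $\mu_k(t_0)$ for divisibility. Your derivation of (SC') from (NC') is logically sound, though the paper proves (SC') directly in the same induction as (SC), and then proves the vertex-monomial part of (NC') separately by showing it is preserved under mutation via Lemma~\ref{change of support}; your ``tropical evaluation of the $F$-polynomial'' sketch for (NC') is a different idea and would need justification.

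There are two genuine gaps. First, you omit the second clause of (NC), that $\gcd_{\mathbf p} e(\mathbf p)=1$. This does not follow from the $F$-polynomial having constant term $1$: one must rule out a common factor $h\in\mathbb{Z}[y_1,y_2,y_3]$ with more than one term. The paper's argument is nontrivial: assuming such an $h$ exists, one checks that $x[\mathbf d]/h$ still satisfies (DC) and hence lies in the upper cluster algebra, and then derives a contradiction by passing to a seed containing $x[\mathbf d]$ and using that $h$ retains at least two terms after the $y\mapsto y'$ substitution (which needs Lemma~\ref{C-matrix det}).

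Second, your uniqueness argument is a different route from the paper's and, as written, does not go through. The ``slab-peeling'' idea determines the extremal slab $p_k=-d_k$ correctly, but for the next slab $p_k=-d_k+1$ the divisor in (DC) drops one power while the intersection of that slab with $\mathbf{P_d}$ typically grows, so (DC) alone leaves a free $\mathbb{Z}[y_k]$-factor of positive degree; your appeal to the coplanarity relation to make the three directions jointly rigid is exactly the hard step, and you have not indicated how to carry it out. The paper avoids this entirely: it observes that any $f$ satisfying (SC)+(DC) remains Laurent after each mutation toward a seed $\Sigma_t$ containing $x[\mathbf d]$, and that the same induction that proves (SC') shows $\mathbf{P_d}$ shrinks to the single point $\{(1,0,0)\}$ over $\Sigma_t$; hence $f|_t$ is a single monomial, which (NC) pins down as $x_1(t)=x[\mathbf d]|_t$. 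This ``mutate until the polytope collapses'' argument is short and robust, whereas a direct slab argument would require substantially more combinatorics than you have sketched.
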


The proof of the theorem is given in the next section. For an example see Section \ref{sect 4}.
The theorem has the following consequence on the support of $F$-polynomials: 

\begin{corollary}\label{cor of main thm}
Using the same notation as in Theorem \ref{main theorem}, let ${\bf g}$ be the g-vector of $x[{\bf d}]$. The support of the $F$-polynomial is contained in the following (possibly unbounded) polyhedron:
$${\bf F}_{\bf d}:=\mathbb{R}_{\ge0}^3\cap \varphi_B^{-1}(\{{\bf p}-{\bf g}\; |\; {\bf p}\in{\bf P_d}\})$$
where $\varphi_B:\mathbb{R}^3\to\mathbb{R}^3$ is the linear map ${\bf q}\mapsto B{\bf q}$,  and $\varphi_B^{-1}$ sends a set to its preimage.
\end{corollary}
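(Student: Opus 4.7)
The plan is to use the separation formula of Fomin--Zelevinsky, which asserts that in a cluster algebra with principal coefficients,
$$x[{\bf d}] = x^{\bf g} F(\hat{y}_1,\hat{y}_2,\hat{y}_3), \qquad \hat{y}_k = y_k \prod_{j=1}^3 x_j^{b_{jk}} = y_k\, x^{B_k}.$$
First I would expand $F = \sum_{{\bf q}\in\mathbb{Z}_{\ge 0}^3} f_{\bf q}\, y^{\bf q}$ (note the support of $F$ lies in $\mathbb{Z}_{\ge 0}^3$ since the $F$-polynomial is an honest polynomial in $y_1,y_2,y_3$) and then substitute, yielding
$$x[{\bf d}] = \sum_{{\bf q}\in\mathbb{Z}_{\ge 0}^3} f_{\bf q}\, y^{\bf q}\, x^{{\bf g}+B{\bf q}}.$$

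Next, I would match coefficients of $x^{\bf p}$ on both sides. Writing $x[{\bf d}] = \sum_{\bf p} e({\bf p}) x^{\bf p}$ as in Theorem~\ref{main theorem}, one obtains
$$e({\bf p}) = \sum_{\substack{{\bf q}\in\mathbb{Z}_{\ge 0}^3 \\ B{\bf q} = {\bf p}-{\bf g}}} f_{\bf q}\, y^{\bf q} \quad\in\mathbb{Z}[y_1,y_2,y_3].$$
The crucial observation is that distinct exponent vectors ${\bf q}$ give rise to linearly independent monomials $y^{\bf q}$, so no cancellation is possible: whenever $f_{\bf q}\ne 0$ the coefficient $e({\bf g}+B{\bf q})$ contains the nonzero term $f_{\bf q}\, y^{\bf q}$ and therefore is itself nonzero.

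Finally, I would invoke the support condition (SC) of Theorem~\ref{main theorem}, which forces ${\bf g}+B{\bf q}\in{\bf P_d}$ whenever $e({\bf g}+B{\bf q})\ne 0$. Equivalently, $B{\bf q} \in \{{\bf p}-{\bf g}\mid {\bf p}\in{\bf P_d}\}$, i.e.\ ${\bf q}\in\varphi_B^{-1}(\{{\bf p}-{\bf g}\mid {\bf p}\in{\bf P_d}\})$. Combined with the a~priori inclusion ${\bf q}\in\mathbb{Z}_{\ge 0}^3\subset\mathbb{R}_{\ge 0}^3$, this gives ${\bf q}\in{\bf F}_{\bf d}$, which is exactly the claim.

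There is no real obstacle: the argument is a clean unraveling of the separation formula combined with (SC). The only point requiring a moment of thought is the non-cancellation step, and that is immediate since the $y_k$ are algebraically independent and the $F$-polynomial expansion in $y$ is unique. Note also that $\varphi_B$ is not injective (the vectors $B_1,B_2,B_3$ satisfy the relation $\bar{b}B_1+\bar{c}B_2+\bar{a}B_3 = 0$), which is why ${\bf F}_{\bf d}$ may in general be unbounded; the notation $\varphi_B^{-1}$ is understood as set-theoretic preimage.
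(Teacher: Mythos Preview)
Your proof is correct and follows essentially the same approach as the paper. The paper's proof invokes equation~\eqref{eq:F and p} (established earlier in the proof of (NC) via the separation formula) to obtain $e({\bf p})=\sum f_{\bf q}\,y^{\bf q}$ summed over ${\bf q}\in\mathbb{Z}_{\ge0}^3$ with $B{\bf q}+{\bf g}={\bf p}$, notes that $e({\bf p})\neq0$ iff some $f_{\bf q}\neq0$, and then applies (SC); you re-derive that equation directly from the separation formula and carry out the same non-cancellation and (SC) steps.
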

\begin{proof}
The support is in $\mathbb{R}_{\ge 0}^3$ because the $F$-polynomial is in $\mathbb{Z}[y_1,y_2,y_3]$. Next, by equation
\eqref{eq:F and p} (and using the notation therein), $e({\bf p})\neq0$ if and only if $f_{ijk}\neq0$ for some ${\bf q}=(i, j, k)$ satisfying $B{\bf q}+{\bf g}={\bf p}$, that is $B{\bf q}={\bf p}-{\bf g}$. This implies that the support of the $F$-polynomial is in $ \varphi_B^{-1}(\{{\bf p}-{\bf g}\; |\; {\bf p}\in{\bf P_d}\})$.
\end{proof}

\begin{remark}\label{remark of cor}
 (1) It has been conjectured that the support of the $F$-polynomial of a cluster variable is always saturated, which is proved  in \cite{Fei} for acyclic skew-symmetric cluster algebras. We say that a non-initial cluster variable $z$ is saturated if there are nonnegative integers $d_1,...,d_n$ and a convex polytope ${\bf T}\subset {\bf R}^n$ such that $\text{supp}(z)$ is obtained by translating ${\bf T} \cap \{(\sum_j [b_{1,j}]_+e_j + [-b_{1,j}]_+(d_j-e_j),..., \sum_j [b_{n,j}]_+e_j + [-b_{n,j}]_+(d_j-e_j) ) \ : \ 0\le e_j\le d_j\text{ for all }j\}$. When the initial exchange matrix $B$, which is the top $n\times n$ submatrix of $\tilde{B}$, is of full rank, it is easy to see that  a cluster variable is saturated if and only if the support of the corresponding $F$-polynomial is saturated. When $B$ is not of full rank,  a cluster variable is not necessarily saturated even if the support of the corresponding $F$-polynomial is saturated.  Such an example appears   in the cluster algebra associated to the following acyclic quiver 
 \[\xymatrix@R10pt{&2\ar[rd]\\1\ar[ru]\ar@<2pt>[rr]\ar@<-2pt>[rr]&&3},\]
where the cluster variable  obtained by mutating at 1,2,3,1,2,3 is not saturated. 

(2) As computed at the end of \S\ref{sect 4}, the convex polyhedron ${\bf F}_{\bf d}$ is often not equal to the Newton polytope of the $F$-polynomial. In fact, $\varphi_B^{-1}(\{{\bf p}-{\bf g}\; |\; {\bf p}\in{\bf P_d}\})$ is a union of parallel lines in the direction of $(\bar{b},\bar{a},\bar{c})$ (the vector that spans the kernel of $\varphi_B$). If the intersection of such a line with $\mathbb{R}_{\ge0}^3$ is nonempty, then the intersection is unbounded if and only if $\bar{a},\bar{b},\bar{c}$ are either all in $\mathbb{R}_{\ge0}$ or all in $\mathbb{R}_{\le0}$. As a consequence, ${\bf F}_{\bf d}$ is unbounded for non-acyclic cluster algebras, so in general it only gives a rough upper bound of the Newton polytope of the $F$-polynomial.  
\end{remark}

\section{Proof of Theorem \ref{main theorem}}\label{sect 3+}

We first show uniquenes. Thus we prove that, given ${\bf d}\in\mathbb{Z}_{\ge0}^3\setminus\{(0,0,0)\}$ which is a d-vector of a non-initial cluster variable, then there is only one Laurent polynomial satisfying the condition
 (SC)+(NC)+(DC) (respectively (SC)+(NC')+(DC)).

Indeed, let $f$ be such a Laurent polynomial. Then the Newton polytope of $f$ and $x[{\bf d}]$ are both contained in ${\bf P_d}$. After changing the initial seed to some seed $\Sigma_t$ that contains $x[{\bf d}]$ as a cluster variable (for simplicity, assume it to be $x_{1}(t)$), we see that the Newton polytope of $f|_t$ is equal to the Newton polytope of $x[{\bf d}]|_t$,  which is a one point set ${\bf P_d}=\{(1,0,0)\}$. But then there is obviously only one Laurent polynomial satisfying (SC)+(NC)+(DC) (resp. (SC)+(NC')+(DC)), namely $x_1(t)$. Thus $f|_t=x_1(t)=x[{\bf d}]|_t$, which implies $f=x[{\bf d}]$.  

\medskip 

It remains to show that a cluster variable $x[{\bf d}]$ satisfies all the conditions given in the theorem. This is obviously true for initial cluster variables. So by Lemma \ref{d-vector positive} we can assume that it is non-initial, i.e., ${\bf d}\in\mathbb{Z}_{\ge0}^3$.
We show each condition in a separate subsection.

\subsection{Proof of (DC)}
To prove (DC), we use the universal Laurent phenomenon. We only show (DC) for $k=1$ because the other cases are similar. Define $h(x_2,x_3)=x_1x_1'$. Then
$$\aligned
h(x_2,x_3)&=\prod x_i^{[-b_{ik}]_+}+y_k\prod x_i^{[b_{ik}]_+}=x_2^{[a']_+}x_3^{[-c]_+}+x_2^{[-a']_+}x_3^{[c]_+}y_1\\
&=x_2^{[-a']_+}x_3^{[-c]_+}(x_2^{a'}+x_3^cy_1),
\endaligned$$
where the last identity holds because $[m]_+=m+[-m]_+$.

Denote the Laurent expansion of $x[{\bf d}]|_{\mu_1(t_0)}$ by 
$$x[{\bf d}]|_{\mu_1(t_0)}=\sum_{p_1',p_2',p_3'} e'(p_1',p_2',p_3') (x_1')^{p_1'}x_2^{p_2'}x_3^{p_3'}, \quad \textrm{where $e'(p_1',p_2',p_3')\in\mathbb{Z}[y'_1,y'_2,y'_3]$}$$
and for each $j=1,2,3$, $y'_j=\prod_{i=4}^6x_i^{b_{ij}'}$ is a Laurent polynomial in   $x_4=y_1$, $x_5=y_2$, $x_6=y_3$,  where we denote by $B'=[b_{ij}']$ the B-matrix of the seed $\mu_1(\Sigma_{t_0})$. 
Then we have
$$\aligned
&\sum_{p_1,p_2,p_3} e(p_1,p_2,p_3) x_1^{p_1}x_2^{p_2}x_3^{p_3}
=\sum_{p_1',p_2',p_3'} {e'}({p_1',p_2',p_3'}) ({x_1'})^{p_1'}x_2^{p_2'}x_3^{p_3'}\\
&=\sum_{p_1',p_2',p_3'} {e'}({p_1',p_2',p_3'}) \Big(\frac{h(x_2,x_3)}{x_1}\Big)^{p_1'}x_2^{p_2'}x_3^{p_3'}\\
&=\sum_{p_1',p_2',p_3'} {e'}({p_1',p_2',p_3'}) x_1^{-p_1'}\Big(h(x_2,x_3)^{p_1'}x_2^{p_2'}x_3^{p_3'}\Big)
\endaligned$$
Regard the above as a Laurent polynomial in $\mathbb{Z}[x_2^{\pm},x_3^{\pm}][x_1^{\pm}]$, that is, as a one-variable Laurent polynomial in $x_1$; then, for a fixed $p_1<0$, take the coefficient of $x_1^{p_1}$ on both ends of the above equalities (so we should take $p_1'=-p_1$ on the right hand side). We then get an equality
$$\sum_{p_2,p_3} e(p_1,p_2,p_3) x_2^{p_2}x_3^{p_3} = h(x_2,x_3)^{-p_1}\sum_{p_2',p_3'} e'(-p_1,p_2',p_3')x_2^{p_2'}x_3^{p_3'}$$
Thus the left hand side is divisible by $h(x_2,x_3)^{-p_1}$, which is equivalent to the condition  (DC) for $k=1$.

From the proof we can also conclude that, if a Laurent polynomial satisfies (DC), then it is in the upper bound/upper cluster algebra $\mathcal{U}=\overline{\mathcal{A}}$. 
Indeed, (DC) implies that, $x[{\bf d}]$ is in the upper bound $U(\Sigma_{t_0})$, where
$\Sigma_{t_0}$ is the initial seed. Since we assume $B$
is skew-symmetrizable, $\Sigma_{t_0}$ is totally mutable. Moreover, $\Sigma_{t_0}$
is coprime because $\tilde{B}$ is full rank \cite[Proposition 1.8]{ClusteralgebraIII}. Therefore 
\cite[Corollary 1.7]{ClusteralgebraIII} implies that $U(\Sigma_{t_0})$ is equal to the upper cluster algebra
$\bar{\mathcal{A}}(\Sigma_{t_0}).$
\medskip

\subsection{Proof of (NC)}  It is shown in \cite{ClusteralgebraIV} that $x[{\bf d}]$ can be expressed by its F-polynomial $F(y_1,y_2,y_3)=\sum_{i,j,k\ge0} f_{ijk}y_1^iy_2^jy_3^k$ as follows (where ${\bf g}=(g_1,g_2,g_3)$ is its g-vector)
$$\aligned
x[{\bf d}]&=x^{\bf g} F(y_1x_2^{-a'}x_3^c,y_2x_1^ax_3^{-b'}\!\!,y_3x_1^{-c'}x_2^b)\\
&=
x_1^{g_1}x_2^{g_2}x_3^{g_3} \!\!\sum_{i,j,k\ge0} \!\!f_{ijk}(y_1x_2^{-a'}x_3^c)^i(y_2x_1^ax_3^{-b'})^j(y_3x_1^{-c'}x_2^b)^k.
\endaligned$$
So
\begin{equation}\label{eq:F and p}
e({\bf p})=\sum f_{ijk}y_1^iy_2^jy_3^k,
\quad \textrm{ where $i,j,k\ge0$ satisfy }
B\begin{bmatrix}i\\j\\k\end{bmatrix}+{\bf g}={\bf p}
\end{equation}
Since the constant term of the F-polynomial of any cluster variable is 1 (Lemma \ref{F-polynomial 1}), there is only one $e({\bf p})$ which has a nonzero constant term, which must be 1. 

Now assume the greatest common divisor of all $e({\bf p})$, which exists uniquely up to sign, is $h\in\mathbb{Z}[y_1,y_2,y_3]$ and $h\neq \pm1$. Since one of $e({\bf p})$ has constant term 1, we can choose $h$ to have constant term 1. Thus $h$ has at least two terms. Define $$X=x[{\bf d}]/h.$$ 

We observe that  $X$ still satisfies (DC), thus is in the upper cluster algebra $\overline{\mathcal{A}}$. Indeed, for each  $m<0$, denote 
$$Y=
\Big({\sum_{{\bf p}\in\mathbb{Z}^3 : p_k=m} e({\bf p}) x^{\bf p}}\Big)
\Big/
{\big(\prod_{i=1}^3 x_i^{[-b_{ik}]_+}+y_k\prod_{i=1}^3 x_i^{[b_{ik}]_+}\big)^{-m}}. 
$$
Then $Y\in \mathbb{Z}[x_1^{\pm},x_2^{\pm},x_3^{\pm},y_k]$ since $x[{\bf d}]$ satisfies (DC). We need to show that $Y/h$ is also in $\mathbb{Z}[x_1^{\pm},x_2^{\pm},x_3^{\pm},y_k]$. Since $\mathbb{Z}[x_1^{\pm},x_2^{\pm},x_3^{\pm},y_k]$ is a UFD and $h$ divides the numerator of $Y$, it suffices to show that $h$ is relatively prime to the denominator of $Y$, or equivalently, show that $h$ is relatively prime to $\prod_{i=1}^3 x_i^{[-b_{ik}]_+}+y_k\prod_{i=1}^3 x_i^{[b_{ik}]_+}$. This is false only if $h=1+y_k$ and $b_{1k}=b_{2k}=b_{3k}=0$, which will not happen because we assume the $B$-matrix is non-degenerate (see Remark \ref{remark:nondegenerate}).

 Similar to the proof of Lemma \ref{d-vector positive}, 
let $\Sigma_t=(x_1',\dots,x_n', y_1',\dots,y_n', B')$ be a seed that contains $x[{\bf d}]=x'_\ell$.  Then
\begin{equation}\label{x'=Xh}
x'_\ell=(X|_{t})(h|_{t})
\end{equation}

We claim that $h|_{t}$, written as a Laurent polynomial in $y_1',y_2',y_3'$, has the same number of terms as $h$ written as a Laurent polynomial in $y_1,y_2,y_3$. Indeed, $y_i'=y^{{\bf c}_i}$ where ${\bf c}_i$ are the $c$-vectors. By Lemma \ref{C-matrix det}, the $c$-vectors ${\bf c}_1, {\bf c}_2, {\bf c}_3\in\mathbb{Z}^3$ are linearly independent, so distinct monomials in $y_1',y_2',y_3'$ convert to distinct monomials in $y_1,y_2,y_3$.

By the above claim, $h|_{t}$ is a Laurent polynomial with at least two terms. So the right hand side of \eqref{x'=Xh} has at least two terms, but the left hand side has only one term,  a contradiction. Therefore the greatest common divisor of all $e({\bf p})$ is 1.

\subsection{Proof of (SC')}\label{subsection:Proof of (SC')} Proving (SC') also proves (SC).
 Let $x=x[{\bf d}]$ be a cluster variable, expressed as a Laurent
polynomial in the initial seed $\Sigma_{t_0}.$
Assume that $\Sigma_{t_0},\Sigma_1,\Sigma_2,...,\Sigma_m$ is a sequence of
mutations of seeds, and that  $x$ is a cluster variable in $\Sigma_m$. We want to show that the condition (SC') holds for $x$ over the seed $\Sigma_{t_0}$. We use induction on $m$. 

If $m=0$ then $x$ is an initial cluster variable and (SC') holds by definition of ${\bf P_d}$. (Recall that as exceptional cases, ${\bf P_d}$ is defined for ${\bf d}=(-1,0,0), (0,-1,0), (0,0,1)$ at the beginning of \S\ref{Weakly convex quadrilaterals in rank 3 cluster algebras}).
For the induction step, we need to show that the quadrilateral  ${\bf P_d} $ is compatible with the mutation. 
This is the longest part of the proof, consisting of a case-by-case computation of the boundary of the quadrilaterals. Without loss of generality, we only need to discuss the cases described in  Lemma \ref{quadrilateral}.

\subsubsection{Proof of (SC') Case 1} Assume $a,b>0$ and $c<0$.  
Using Lemma \ref{quadrilateral} we obtain the quadrilateral ${\bf P_d}$ having the following vertices:

 $$\aligned
 &P_1=(-d_1,-d_2+{a'}d_1,-d_3-cd_1+{b'}d_2),\\
 &P_2=(-d_1,-d_2,-d_3+{b'}d_2),\\
 &P_3=(-d_1+ad_2,-d_2,-d_3),\\
 &P_4=(-d_1+ad_2-{c'}d_3,-d_2+bd_3,-d_3).
 \endaligned
 $$

\medskip

\noindent\underline{We show that the quadrilateral changes as expected under the mutation $\mu_1$:}
More precisely, by changing the initial seed from $t_0$ to $\mu_1(t_0)$, we substitute $x_1$ by $(p_1^+x_2^{a'}x_3^{-c}+p_1^-)/x_1'$ in $x[{\bf d}]$, and get a cluster variable $x'[{\bf d}']=\sum e'({\bf p})x'^{\bf p}=\sum e'(p_1,p_2,p_3)(x_1')^{p_1}x_2^{p_2}x_3^{p_3}$ with $d$-vector ${\bf d}'$. Then we need to show that the convex hull of  the set $\{{\bf p}| e'({\bf p})\neq 0\}$ is $|{\bf P}^{B'}_{{\bf d}'}|$, where $B'$ is as follows (note that $a,a'>0$ and $c,c'<0$ by assumption):
$$ 
B':=\mu_1(B)
=\begin{bmatrix}0&-a&c'\\ a'&0&b+{\rm sgn}(a')[(-a')(-c')]_+\\-c&-b'+{\rm sgn}(c)[ca]_+&0\end{bmatrix}
=\begin{bmatrix}0&-a&c'\\a'&0&b\\-c&-b'&0\end{bmatrix}.$$
\smallskip

First, we use Lemma \ref{lemma:support change} to determine the convex hull of $\{{\bf p}| e'({\bf p})\neq 0\}$. 
Let $T_{in}$ be the segment $P_1P_4$, $T_{out}$ be the  polygonal chain\footnote{A  polygonal chain $P_1P_2\cdots P_n$ is a curve consisting of line segments connecting the consecutive vertices $P_i$ and $P_{i+1}$ for $i=1,\dots,n-1$.} $P_2P_3P_4$, and define points $P'_1,\dots,P'_4$ to satisfy  $\alpha_1(P_1P_4)=P'_1P'_4$ and $\beta_1(P_2P_3P_4)=P'_1P'_2P'_3$, that is,
$$\aligned
&{P'_1}=\alpha_1(P_1)=\beta_1(P_2)=(d_1,-d_2,-d_3+{b'}d_2),\\
&{P'_2}=\beta_1(P_3)=(d_1-ad_2,-d_2,-d_3),\\
&{P'_3=\beta_1(P_4)=(d_1-ad_2+c'd_3,-d_2+bd_3,-d_3),}\\
&{P'_4=\alpha_1(P_4)=(d_1-ad_2+c'd_3,-a'd_1+(aa'-1)d_2+(b-a'c')d_3},\\
&\quad\quad{cd_1-acd_2+(cc'-1)d_3)}.
\endaligned
$$ 
Then Lemma \ref{lemma:support change} guarantees that convex hull of the set $\{{\bf p}| e'({\bf p})\neq 0\}$  is $|P'_1P'_2P'_3P'_4|$. (See Figure \ref{fig:SCcase1}.)
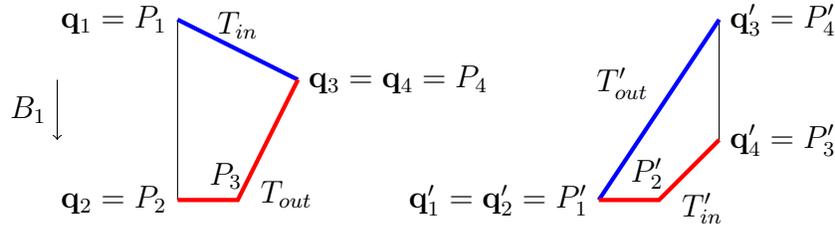
\begin{figure}[ht]
\begin{tikzpicture}[scale=.8]
\draw [->](-3,2) -- (-3,1);\node[left] at (-3,1.5) {$B_1$};
\draw (-1,3) -- (-1,0);
\draw[scale=1,ultra thick,blue] (-1,3)--(1,2);
\draw[scale=1,ultra thick,red] (-1,0)--(0,0)--(1,2);
\node[left] at (-1,3) {${\bf q}_1=P_1$};
\node[left] at (-1,0) {${\bf q}_2=P_2$};
\node[right] at (1,2) {${\bf q}_3={\bf q}_4=P_4$};
\node[above] at (-.2,0) {$P_3$};
\node[above] at (0,2.5) {$T_{in}$};
\node[below right] at (0.2,0.5) {$T_{out}$};
\begin{scope}[shift={(7,0)}]
\draw (1,3) -- (1,1);
\draw[scale=1,ultra thick,blue] (-1,0)--(1,3);
\draw[scale=1,ultra thick,red] (-1,0)--(0,0)--(1,1);
\node[left] at (-1,0) {${\bf q}'_1={\bf q}'_2=P'_1$};
\node[right] at (1,3) {${\bf q}'_3=P'_4$};
\node[right] at (1,1) {${\bf q}'_4=P'_3$};
\node[above] at (-.2,0) {$P'_2$};
\node[above left] at (0,1.5) {$T'_{out}$};
\node[below right] at (0.2,0.3) {$T'_{in}$};
\end{scope}
\end{tikzpicture}
\caption{(SC'), Case 1, $\mu_1$. (Left: projection to $xy$-plane. Right: projection to $xy$-plane followed by reflection about $y$-axis.)}\label{fig:SCcase1}
\end{figure}
\smallskip

Next, we explicitly determine ${\bf d}'$. By \eqref{min formular for d-vector}, ${\bf d}'$ is equal to the $-\minvec$ of the vertices of the convex hull  $|P'_1P'_2P'_3P'_4|$, therefore 
\begin{equation}\label{minvec for P'_1,...}
{\bf d}'=-\minvec(P'_1,\dots,P'_4)
\end{equation}
 Thus
\[\begin{array}{rcl}
d'_1&=&-\min(d_1,d_1-ad_2,d_1-ad_2+c'd_3)=ad_2-c'd_3-d_1,\\
d'_2&=&-\min(-d_2,-d_2+bd_3,-a'd_1+(aa'-1)d_2+(b-a'c')d_3)\\
&=&-\min(-d_2,-d_2+bd_3,-d_2+a'd'_1+bd_3) \ = \ d_2,
\\
d'_3&=&-\min(-d_3+b'd_2,-d_3,cd_1-acd_2+(cc'-1)d_3)\\
&=&-\min(-d_3+b'd_2,-d_3,-d_3-cd'_1) \ =\ d_3\\
\end{array}\]

Lastly, we show that $x'[{\bf d}']$ satisfies (SC'), that is, the convex hull of the set $\{{\bf p}| e'({\bf p})\neq 0\}$ is equal to $|{\bf P}^{B'}_{{\bf d}'}|$, or equivalently,
$$|P'_1P'_2P'_3P'_4|=|{\bf P}^{B'}_{{\bf d}'}|$$
If $d'_1<0$, then by Lemma \ref{d-vector positive}, ${\bf d}'=(-1,0,0)$, and (SC') is trivially true. So in the following we assume $d'_1\ge0$.

We shall show that we can actually take ${\bf P}^{B'}_{{\bf d}'}=P'_1P'_2P'_3P'_4$, that is, $P'_1,\dots,P'_4$ satisfy the two conditions in Lemma \ref{quadrilateral} (recall that ${\bf P}^{B'}_{{\bf d}'}$  may not be unique but its convex hull $|{\bf P}^{B'}_{{\bf d}'}|$ is). 

The condition (2) follows from \eqref{minvec for P'_1,...}. 

The condition  (1) holds for $(i,j,k)=(2,3,1)$. Indeed:

For (1a), we have $b'_{23}=b\ge0$ and $b'_{31}=-c\ge0$.

For (1b), we have  $\vv'_i=d'_iB'_i$ for $i=1,2,3$, $\vv'_4=-\vv'_1-\vv'_2-\vv'_3=(ad_2-c'd_3, a'd_1-aa'd_2+(a'c'-b)d_3,-cd_1+(b'+ac)d_2-cc'd_3)$. It is straightforward to check
$P'_1P'_2=\vv'_2$, $P'_2P'_3=\vv'_3$, $P'_3P'_4=\vv'_1$.

For (1c), we have  $B'_2, B'_3, B'_1, \vv'_4$ are in circular order because $B'_2, B'_3, B'_1$ are strictly in the same half plane, and
$$DB'\begin{bmatrix}-\bar{b}\\\bar{c}\\\bar{a}\end{bmatrix}
=\begin{bmatrix}0&-\bar{a}&\bar{c}\\ \bar{a}&0&\bar{b}\\-\bar{c}&-\bar{b}&0\end{bmatrix}\begin{bmatrix}-\bar{b}\\\bar{c}\\\bar{a}\end{bmatrix}={\bf 0}
\Rightarrow
B'\begin{bmatrix}-\bar{b}\\\bar{c}\\\bar{a}\end{bmatrix}={\bf 0}
\Rightarrow -\bar{b}B'_1+\bar{c}B'_2+\bar{a}B'_3={\bf 0}
$$
implies that  $B'_3=(-\bar{c}/ \bar{a})B'_2+(\bar{b}/\bar{a})B'_1$ where both coefficients are positive. So (1c) follows from Lemma \ref{criterion of circular order}.

This completes the proof that  the quadrilateral changes as expected under the mutation $\mu_1$.

\medskip

The rest of the proof is similar to the above discussion of the quadrilateral change after $\mu_1$. For this reason we simply point out the difference. 
\medskip

\noindent\underline{To show that the quadrilateral changes as expected under the mutation $\mu_2$:}

Substitute $x_2$ by $(p_2^+x_3^{b}+p_2^-x_1^{a'})/x_2$ in $x[{\bf d}]$, and get $x'[{\bf d}']$. Define
$$ 
B':=\mu_2(B)=\begin{bmatrix}0&-a&ab-c'\\a'&0&-b\\c-a'b'&b'&0\end{bmatrix}
$$
Note that
$$DB'\begin{bmatrix}\bar{a}\bar{b}/\delta_2-\bar{c}\\\bar{a}\end{bmatrix}
=\begin{bmatrix}0&-\bar{a}&\bar{a}\bar{b}/\delta_2-\bar{c}\\ \bar{a}&0&-\bar{b}\\\bar{c}-\bar{a}\bar{b}/\delta_2&\bar{b}&0\end{bmatrix}\begin{bmatrix}\bar{b}\\ 
\bar{a}\bar{b}/\delta_2-\bar{c}\\\bar{a}\end{bmatrix}={\bf 0}
$$
implies $ \bar{b}B'_1+(\bar{a}\bar{b}/\delta_2-\bar{c})B'_2+\bar{a}B'_3={\bf 0}$.

The $y$-coordinate of $P_1$ is $-d_2+ad_1$ and the $y$-coordinate of $P_4$ is $-d_2+bd_3$; thus their difference is  $-a'd_1+bd_3$. We will distinguish three cases according to the sign of this difference. 

(i) Suppose $-a'd_1+bd_3<0$. Geometrically, it means that $P_4$ is strictly lower than $P_1$ after projection to $xy$-plane; see Figure \ref{fig:SCcase1mu2i}. Since $T_{in}$ and $T_{out}$ are taken relative to $B_2$ in this case, we have $T_{in}=P_1P_2$ and $T_{out}=P_1P_4P_3$.
Thus in the notation of Lemma \ref{lemma:support change}, we have 
${\bf q}_1=P_1$,
${\bf q}_2=P_1$,
${\bf q}_3=P_2$,
${\bf q}_4=P_3$,
$T_{out}'=\alpha_2(T_{in})$,
$T_{in}'=\beta_2(T_{out})$. 
Therefore Lemma \ref{lemma:support change} implies
$$\aligned
&{P'_1=\beta_2(P_1)=((aa'-1)d_1-ad_2,d_2-a'd_1,-d_3-cd_1+b'd_2),}\\
&{P'_2=\alpha_2(P_1)=(-d_1,d_2-a'd_1,-d_3+(a'b'-c)d_1),}\\
&{P'_3=\alpha_2(P_2)=\beta_2(P_3)=(-d_1,d_2,-d_3),}\\
&{P'_4=\beta_2(P_4)=(-d_1+(ab-c')d_3,d_2-bd_3,-d_3).}
\endaligned
$$ 

\begin{figure}[ht]
\begin{tikzpicture}
\draw [->](-3,1) -- (-2,1);\node[above] at (-2.5,1) {$B_2$};
\draw[scale=1,ultra thick,blue]  (-1,3) -- (-1,0);
\draw(-1,0)--(0,0);
\draw[scale=1,ultra thick,red] (0,0)--(1,2)--(-1,3);
\node[left] at (-1,3) {$P_1$};
\node[left] at (-1,0) {$P_2$};
\node[right] at (1,2) {$P_4$};
\node[right] at (0,0) {$P_3$};
\node[left] at (-1,1.5) {$T_{in}$};
\node[below right] at (0.8,1.3) {$T_{out}$};
\begin{scope}[shift={(7,0)}]
\draw[scale=1,ultra thick,blue]  (-1,3) -- (-1,0);
\draw(-1,3)--(1.3,3);
\draw[scale=1,ultra thick,red] (-1,0)--(1,2)--(1.3,3);
\node[left] at (-1,3) {$P'_2$};
\node[left] at (-1,0) {$P'_3$};
\node[right] at (1,2) {$P'_4$};
\node[right] at (1.3,3) {$P'_1$};
\node[left] at (-1,1.5) {$T'_{out}$};
\node[below right] at (0.5,1.3) {$T'_{in}$};
\end{scope}
\end{tikzpicture}
\caption{(SC'), Case 1, $\mu_2$, (i). (Left: projection to $xy$-plane. Right:  projection to $xy$-plane followed by reflection about $x$-axis.)}\label{fig:SCcase1mu2i}
\end{figure}
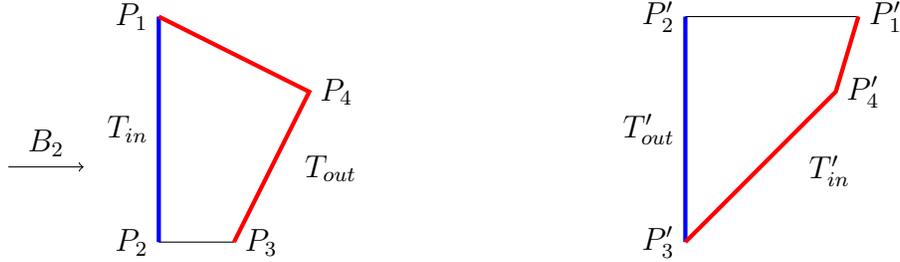

We claim that $x'[{\bf d}']$ also satisfies (SC'). If $d'_2<0$, then ${\bf d}'=(0,-1,0)$, and (SC') is trivially true. So we assume $d'_2\ge0$. The condition (2) of  Lemma \ref{quadrilateral} determines the vector ${\bf d}'=(d'_1,d'_2,d'_3)$:
\[
\begin{array}
 {rcl}
d_2'&=&-\min(d_2-a'd_1,d_2,d_2-bd_3)\\&=&a'd_1-d_2, \textup{(because of the assumption $-a'd_1+bd_3<0$)}
\\
d_3'&=&-\min(-d_3-cd_1+b'd_2,-d_3+(a'b'-c)d_1,-d_3)=d_3,
\\
d_1'&=&-\min((aa'-1)d_1-ad_2,-d_1,-d_1+(ab-c')d_3)
\\&=&-\min(-d_1+ad'_2,-d_1,-d_1+(ab-c')d_3)=d_1.
\end{array}\]
We show that the three conditions in  Lemma \ref{quadrilateral} (1) hold for $(i,j,k)=(2,1,3)$:

(1a)  $b'_{21}=a'\ge0$ and $b'_{13}=ab-c'\ge0$.

(1b) Use $\vv'_4=(aa'd_1-ad_2-(ab-c')d_3,-a'd_1+bd_3,-cd_1+b'd_2)$.

(1c)  $B'_2, B'_1, B'_3$ are not strictly in the same half plane, and $\vv_4'=\lambda_2 B_2'+\lambda_3 B_3'$ with $\lambda_2=(-cd_1+b'd_2)/b'\ge0$, $\lambda_3=(-a'd_1+bd_3)/(-b)>0$. So $B'_2, B'_1, B'_3, \vv'_4$ are in circular order by Lemma \ref{criterion of circular order}.
\medskip

(ii) Suppose $-a'd_1+bd_3>0$. 
 Geometrically, it means that $P_4$ is strictly higher than $P_1$ after projection to $xy$-plane; see Figure \ref{fig:SCcase1mu2ii}. So $T_{in}=P_4P_1P_2$ and $T_{out}=P_3P_4$. Therefore Lemma~\ref{lemma:support change} implies

$$\aligned
&{P'_1=\alpha_2(P_1)=(-d_1,d_2-a'd_1,-d_3+(a'b'-c)d_1),}\\
&{P'_2=\alpha_2(P_2)=\beta_2(P_3)=(-d_1,d_2,-d_3),}\\
&{P'_3=\beta_2(P_4)=(-d_1+(ab-c')d_3,d_2-bd_3,-d_3),}\\
&{P'_4=\alpha_2(P_4)=(-d_1+ad_2-c'd_3,d_2-bd_3,-b'd_2+(bb'-1)d_3).}
\endaligned
$$ 

\begin{figure}[ht]
\begin{tikzpicture}[scale=.8]
\draw [->](-3,1) -- (-2,1);\node[above] at (-2.5,1) {$B_2$};
\draw[scale=1,ultra thick,blue]  (1,3)--(-1,2) -- (-1,0);
\draw(-1,0)--(0,0);
\draw[scale=1,ultra thick,red] (0,0)--(1,3);
\node[left] at (-1,2) {$P_1$};
\node[left] at (-1,0) {$P_2$};
\node[right] at (1,3) {$P_4$};
\node[right] at (0,0) {$P_3$};
\node[left] at (-1,1) {$T_{in}$};
\node[right] at (0.5,1.5) {$T_{out}$};
\begin{scope}[shift={(7,0)}]
\draw[scale=1,ultra thick,blue]  (-1,0)--(-1,2)--(0,3);
\draw(0,3)--(2.5,3);
\draw[scale=1,ultra thick,red] (-1,0)--(2.5,3);
\node[left] at (-.2,3) {$P'_4$};
\node[left] at (-1,0) {$P'_2$};
\node[left] at (-1,2) {$P'_1$};
\node[right] at (2.5,3) {$P'_3$};
\node[left] at (-1,1) {$T'_{out}$};
\node[below right] at (0.5,1.3) {$T'_{in}$};
\end{scope}
\end{tikzpicture}
\caption{(SC'), Case 1, $\mu_2$, (ii). (Left: projection to $xy$-plane. Right:  projection to $xy$-plane followed by reflection about $x$-axis.)}\label{fig:SCcase1mu2ii}
\end{figure}
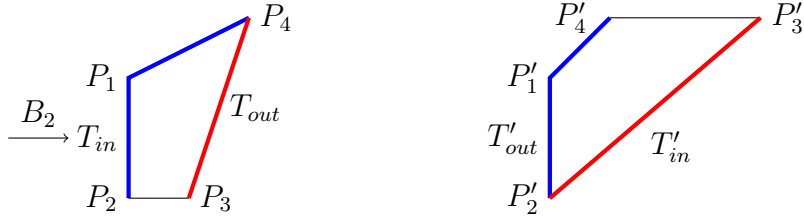

We claim that $x'[{\bf d}']$ also satisfies (SC'). Like before, assume $d'_2\ge0$. The condition (2) of  Lemma \ref{quadrilateral} determines the vector ${\bf d}'=(d'_1,d'_2,d'_3)$:

$d_2'=-\min(d_2-a'd_1,d_2,d_2-bd_3)=bd_3-d_2$, (because of the assumption $-a'd_1+bd_3>0$)

$d_1'=-\min(-d_1,-d_1+(ab-c')d_3,-d_1+ad_2-c'd_3)=d_1$,

$d_3'=-\min(-d_3+(a'b'-c)d_1,-d_3,-b'd_2+(bb'-1)d_3)=-\min(-d_3+(a'b'-c)d_1,-d_3,-d_3+b'd'_2=d_3$.

We show that the three conditions in  Lemma \ref{quadrilateral} (1) hold for $(i,j,k)=(1,3,2)$:

(1a)  $b'_{13}=ab-c'\ge0$ and $b'_{32}=b'\ge0$.

(1b) Use  $\vv'_4=(-ad_2+c'd_3,bd_3-a'd_1,(a'b'-c)d_1+b'd_2-bb'd_3)$.

(1c)  $B'_1, B'_3, B'_2$ are not in the same half plane, and $\vv_4'=\lambda_1 B_1'+\lambda_2 B_2'$ with $\lambda_1=(bd_3-a'd_1)/a'>0$, $\lambda_2=(-ad_2+c'd_3)/(-a)\ge0$. So $B'_1, B'_3, B'_2, \vv'_4$ are in circular order by Lemma \ref{criterion of circular order}.
\medskip

(iii) Suppose $-a'd_1+bd_3=0$.  Geometrically, it means that $P_4$ is at the same height at $P_1$ after projection to $xy$-plane; see Figure \ref{fig:SCcase1mu2iii}. So $T_{in}=P_1P_2$ and $T_{out}=P_3P_4$.

The Newton polytope of $x'[{\bf d}']$ is in a triangle $P'_1P'_2P'_3$, determined by $\alpha_2(P_1P_2)=P'_1P'_2$ and  $\beta_2(P_3P_4)=P'_2P'_3$. We can view this as a degenerate case of either (i) or (ii), and the proof of (SC')  still works. Note that these two ${\bf P_d}$ gives the same triangle convex hull $|{\bf P_d}|$.

\begin{figure}[ht]
\begin{tikzpicture}[scale=.8]
\draw [->](-3,1) -- (-2,1);\node[above] at (-2.5,1) {$B_2$};
\draw[scale=1,ultra thick,blue]  (-1,2) -- (-1,0);
\draw(-1,0)--(0,0) (1,2)--(-1,2);
\draw[scale=1,ultra thick,red] (0,0)--(1,2);
\node[left] at (-1,2) {$P_1$};
\node[left] at (-1,0) {$P_2$};
\node[right] at (1,2) {$P_4$};
\node[right] at (0,0) {$P_3$};
\node[left] at (-1,1) {$T_{in}$};
\node[right] at (0.5,1) {$T_{out}$};
\begin{scope}[shift={(4,0)}]
\draw[scale=1,ultra thick,blue]  (-1,0)--(-1,2);
\draw(-1,2)--(2,2);
\draw[scale=1,ultra thick,red] (-1,0)--(2,2);
\node[above] at (0,2) {$P'_1$};\draw[black,fill] (0,2) circle (.5ex);
\node[left] at (-1,0) {$P'_3$};
\node[left] at (-1,2) {$P'_2$};
\node[right] at (2,2) {$P'_4$};
\node[left] at (-1,1) {$T'_{out}$};
\node[below right] at (.8,1.3) {$T'_{in}$};
\node[below] at (0.5,-.5) {As a degenerate of (i)};
\end{scope}
\begin{scope}[shift={(9,0)}]
\draw[scale=1,ultra thick,blue]  (-1,0)--(-1,2);
\draw(-1,2)--(2,2);
\draw[scale=1,ultra thick,red] (-1,0)--(2,2);
\node[above] at (0.5,2) {$P'_4$};\draw[black,fill] (0.5,2) circle (.5ex);
\node[left] at (-1,0) {$P'_2$};
\node[left] at (-1,2) {$P'_1$};
\node[right] at (2,2) {$P'_3$};
\node[left] at (-1,1) {$T'_{out}$};
\node[below right] at (.8,1.3) {$T'_{in}$};
\node[below] at (0.5,-.5) {As a degenerate of (ii)};
\end{scope}
\end{tikzpicture}
\caption{(SC'), Case 1, $\mu_2$, (iii). (Left: projection to $xy$-plane. Middle and Right: projection to $xy$-plane followed by reflection about $x$-axis.)}\label{fig:SCcase1mu2iii}
\end{figure}

\medskip

\noindent\underline{To show that the quadrilateral changes as expected under the mutation $\mu_3$:}

Substitute $x_3$ by $(p_3^++p_1^-x_2^bx_1^{-c'})/x_3$ in $x[{\bf d}]$, and get $x'[{\bf d}']$. 
$$ 
B':=\mu_3(B)=[b'_{ij}]=\begin{bmatrix}0&a&c'\\-a'&0&-b\\-c&b'&0\end{bmatrix}
$$
Applying  Lemma \ref{change of support} to $T_{in}=P_1P_2P_3$, $T_{out}=P_1P_4$ , we obtain the quadrilateral $P'_1P'_2P'_3P'_4$, determined by $\alpha_3(P_1P_2P_3)=P'_2P'_3P'_4$ and $\beta_3(P_1P_4)=P'_1P'_4$. See Figure \ref{fig:SCcase1mu3}. Thus
$$\aligned
&{P'_1=\beta_3(P_1)=(-d_1-c'(-cd_1+b'd_2-d_3),-d_2+a'd_1+b(-d_3-cd_1+b'd_2)},\\
&\quad\quad {d_3+cd_1-b'd_2),}\\
&{P'_2=\alpha_3(P_1)=(-d_1,-d_2+a'd_1,d_3+cd_1-b'd_2),}\\
&{P'_3=\alpha_3(P_2)=(-d_1,-d_2,d_3-b'd_2),}\\
&{P'_4=\alpha_3(P_3)=\beta_3(P_4)=(-d_1+ad_2,-d_2,d_3).}
\endaligned
$$

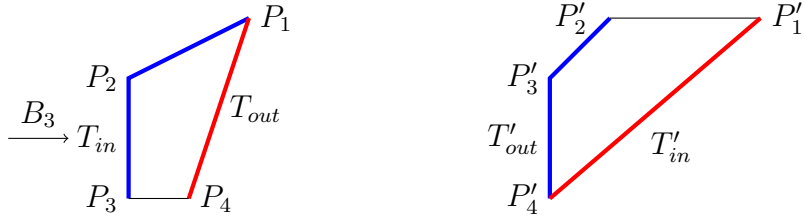
\begin{figure}[ht]
\begin{tikzpicture}[scale=.8]
\draw [->](-3,1) -- (-2,1);\node[above] at (-2.5,1) {$B_3$};
\draw[scale=1,ultra thick,blue]  (1,3)--(-1,2) -- (-1,0);
\draw(-1,0)--(0,0);
\draw[scale=1,ultra thick,red] (0,0)--(1,3);
\node[left] at (-1,2) {$P_2$};
\node[left] at (-1,0) {$P_3$};
\node[right] at (1,3) {$P_1$};
\node[right] at (0,0) {$P_4$};
\node[left] at (-1,1) {$T_{in}$};
\node[right] at (0.5,1.5) {$T_{out}$};
\begin{scope}[shift={(7,0)}]
\draw[scale=1,ultra thick,blue]  (-1,0)--(-1,2)--(0,3);
\draw(0,3)--(2.5,3);
\draw[scale=1,ultra thick,red] (-1,0)--(2.5,3);
\node[left] at (-.2,3) {$P'_2$};
\node[left] at (-1,0) {$P'_4$};
\node[left] at (-1,2) {$P'_3$};
\node[right] at (2.5,3) {$P'_1$};
\node[left] at (-1,1) {$T'_{out}$};
\node[below right] at (0.5,1.3) {$T'_{in}$};
\end{scope}
\end{tikzpicture}
\caption{(SC'), Case 1, $\mu_3$. (Left: projection to $yz$-plane. Right:  projection to $yz$-plane followed by reflection about $y$-axis.)}\label{fig:SCcase1mu3}
\end{figure}

To show $x'[{\bf d}']$ satisfies (SC'). We can assume $d'_3\ge0$ as before. Then
 $d'_1=d_1$, $d'_2=d_2$, $d'_3=-cd_1+b'd_2-d_3$, $\vv'_4=(-c'(-cd_1+b'd_2-d_3)-ad_2,a'd_1+b(-d_3-cd_1+b'd_2),cd_1-b'd_2)$.
We claim the conditions in  Lemma \ref{quadrilateral} (1) hold for $(i,j,k)=(3,1,2)$. Indeed:

For (1a): $b'_{31}=-c\ge0$, $b'_{12}=a>0$.

For (1b): straightforward check.

For (1c): $B'_3, B'_1, B'_2$ are strictly in the same half-plane, and $B'_1=(a'/b)B'_3+(-c/b')B'_2$ where both coefficients are nonnegative. 

\subsubsection{Proof of (SC') Case 2}  Assume $Q$ is of the form $1\to2\to3$, that is, $a,b>0$ and $c=0$.

This is a degenerated case of Case 1. We shall only explain the difference in the argument. 

For $\mu_1$: assume $d'_1\ge0$. The vectors $B_1$ and $B_3$ are in opposite direction, so $P_1P_2$ is parallel to $P_3P_4$. The point $P'_3$ is on the line segment $P_2'P_4'$, so $|{\bf P_d}|$ is the triangle $P_1'P_2'P_4'$. The proof is same as Case 1; the circular order condition (1c) trivially holds (where $(i,j,k)=(2,3,1)$) because $B'_1$ and $B'_3$ are in the same direction. See Figure \ref{fig:SCcase2mu1}.

\begin{figure}[ht]
\begin{tikzpicture}
\draw [->](-3,2) -- (-3,1);\node[left] at (-3,1.5) {$B_1$};
\draw (-1,3) -- (-1,0) (1,0)--(1,2);
\draw[scale=1,ultra thick,blue] (-1,3)--(1,2);
\draw[scale=1,ultra thick,red] (-1,0)--(1,0);
\node[left] at (-1,3) {$P_1$};
\node[left] at (-1,0) {$P_2$};
\node[right] at (1,2) {$P_4$};
\node[right] at (1,0) {$P_3$};
\node[above] at (0,2.5) {$T_{in}$};
\node[above] at (0,0) {$T_{out}$};
\begin{scope}[shift={(7,0)}]
\draw (1,3) -- (1,0);
\draw[scale=1,ultra thick,blue] (-1,0)--(1,3);
\draw[scale=1,ultra thick,red] (-1,0)--(1,0);
\node[left] at (-1,0) {$P'_1$};
\node[right] at (1,3) {$P'_4$};
\node[right] at (1,1) {$P'_3$}; \draw[black,fill] (1,1) circle (.5ex);
\node[right] at (1,0) {$P'_2$};
\node[above left] at (0,1.5) {$T'_{out}$};
\node[above] at (0,0) {$T'_{in}$};
\end{scope}
\end{tikzpicture}
\caption{(SC'), Case 2, $\mu_1$. (Left: projection to $xy$-plane. Right: projection to $xy$-plane followed by reflection about $y$-axis.)}\label{fig:SCcase2mu1}
\end{figure}
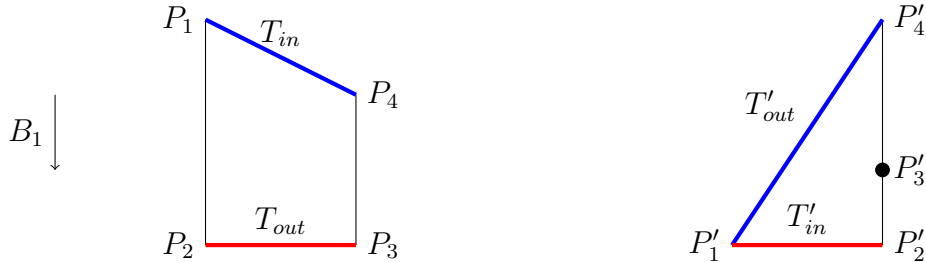

For $\mu_2$: same argument as in Case 1.

For $\mu_3$: assume $d'_3\ge0$. The point $P'_2$ is on the line segment $P_1'P_3'$, so $|{\bf P_d}|$ is the triangle $P_1'P_3'P_4'$. The proof is same as Case 1; the circular order condition trivially holds (where $(i,j,k)=(3,1,2)$) because $B'_1$ and $B'_3$ are in the same direction. See Figure \ref{fig:SCcase2mu3}.

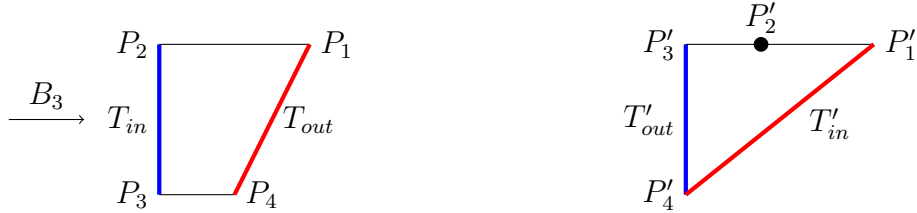
\begin{figure}[ht]
\begin{tikzpicture}
\draw [->](-3,1) -- (-2,1);\node[above] at (-2.5,1) {$B_3$};
\draw[scale=1,ultra thick,blue] (-1,2) -- (-1,0);
\draw(-1,0)--(0,0) (1,2)--(-1,2);
\draw[scale=1,ultra thick,red] (0,0)--(1,2);
\node[left] at (-1,2) {$P_2$};
\node[left] at (-1,0) {$P_3$};
\node[right] at (1,2) {$P_1$};
\node[right] at (0,0) {$P_4$};
\node[left] at (-1,1) {$T_{in}$};
\node[right] at (0.5,1) {$T_{out}$};
\begin{scope}[shift={(7,0)}]
\draw[scale=1,ultra thick,blue]  (-1,0)--(-1,2);
\draw(-1,2)--(1.5,2);
\draw[scale=1,ultra thick,red] (-1,0)--(1.5,2);
\node[above] at (0,2) {$P'_2$};\draw[black,fill] (0,2) circle (.5ex);
\node[left] at (-1,0) {$P'_4$};
\node[left] at (-1,2) {$P'_3$};
\node[right] at (1.5,2) {$P'_1$};
\node[left] at (-1,1) {$T'_{out}$};
\node[below right] at (0.5,1.3) {$T'_{in}$};
\end{scope}
\end{tikzpicture}
\caption{(SC'), Case 2, $\mu_3$. (Left: projection to $yz$-plane. Right:  projection to $yz$-plane followed by reflection about $y$-axis.)}\label{fig:SCcase2mu3}
\end{figure}

\subsubsection{Proof of (SC') Case 3} (a) Assume $Q$ is of the form $1\to 3\leftarrow 2$, that is, $a=0$, $b>0>c$. Thus $B_1$ and $B_2$ are in the same direction. 

This is degenerated from Case 1. We shall explain the difference. 

For $\mu_1$: Note $B'_1=(0,0,-c)$, $B'_2=(0,0,-b')$, $B'_3=(c',b,0)$, $(i,j,k)=(2,3,1)$. To show that $B'_2, B'_3, B'_1, \vv'_4$ are in circular order, we use the fact that $B'_2$ and $B'_1$ are in opposite directions. See Figure \ref{fig:SCcase3mu1}.

\begin{figure}[ht]
\begin{tikzpicture}
\draw [->](-3,2) -- (-3,1);\node[left] at (-3,1.5) {$B_1$};
\draw (-1,3) -- (-1,0) ;
\draw[scale=1,ultra thick,blue] (-1,3)--(1,0);
\draw[scale=1,ultra thick,red] (-1,0)--(1,0);
\node[left] at (-1,3) {$P_1$};
\node[left] at (-1,1) {$P_2$}; \draw[black,fill] (-1,1) circle (.5ex);
\node[right] at (1,0) {$P_4$};
\node[left] at (-1,0) {$P_3$};
\node[above] at (0,2) {$T_{in}$};
\node[above] at (0,0) {$T_{out}$};
\begin{scope}[shift={(7,0)}]
\draw (1,3) -- (1,0) (-1,0)--(-1,1);
\draw[scale=1,ultra thick,blue] (-1,1)--(1,3);
\draw[scale=1,ultra thick,red] (-1,0)--(1,0);
\node[left] at (-1,0) {$P'_2$};
\node[right] at (1,3) {$P'_4$};
\node[left] at (-1,1) {$P'_1$}; 
\node[right] at (1,0) {$P'_3$};
\node[above left] at (0,1.8) {$T'_{out}$};
\node[above] at (0,0) {$T'_{in}$};
\end{scope}
\end{tikzpicture}
\caption{(SC'), Case 3, $\mu_1$ or $\mu_2$. (Left: projection to $xz$-plane. Right: projection to $xz$-plane followed by reflection about $z$-axis.)}\label{fig:SCcase3mu1}
\end{figure}
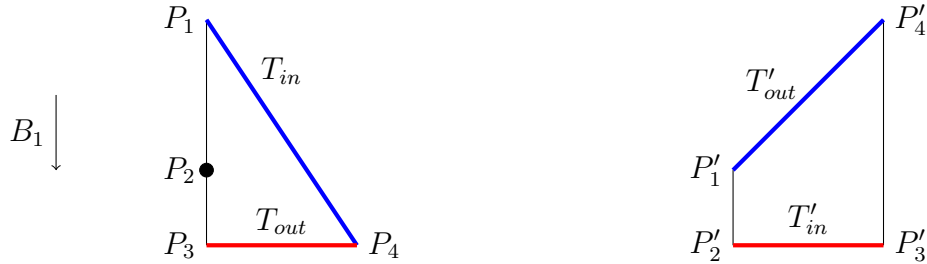

For $\mu_2$: Note $B'_1=(0,0,c)$, $B'_2=(0,0,b')$, $B'_3=(-c',-b,0)$. The three cases described in (Case 1, $\mu_2$) degenerate to:

(i): If $bd_3<0$. Impossible since we assume $d_3\ge0$.

(ii) and (iii): If $bd_3\ge 0$. Take $(i,j,k)=(1,3,2)$. Vectors $B'_1,B'_3,B'_2,\vv'_4$ are in circular order because $B'_1, B'_2$ are in opposite directions. Also see Figure \ref{fig:SCcase3mu1}. 

\medskip

For $\mu_3$: This is the same argument as Case 1.

\bigskip

(b)  Assume $Q$ is of the form $2\leftarrow 1\to 3$, that is,  $a>0$, $b=0$, $c<0$. 
 Then $B_2$ and $B_3$ are in the same direction.

This case is also degenerated from Case 1. We shall explain the difference. 

For $\mu_1$: This is the same argument as in Case 1.

For $\mu_2$: As before, assume $d'_2=bd_3-d_2=-d_2\ge0$. Since we assume $d_2\ge0$, we must have $d'_2=d_2=0$.  Note $B'_1=(0,a',c)$, $B'_2=(-a,0,0)$, $B'_3=(-c',0,0)$. The three cases described in (Case 1, $\mu_2$) degenerate to: 

(i) and (iii): If $-a'd_1\le 0$. Then take $(i,j,k)=(2,1,3)$. Vectors $B'_2,B'_1,B'_3,\vv'_4$ are in circular order because $B'_2, B'_3$ are in opposite directions. 

(ii): If $-a'd_1>0$. Then $d_1<0$, ${\bf d}=(-1,0,0)$ and $x[{\bf d}]=x_1$, which is a trivial case.

\begin{figure}[ht]
\begin{tikzpicture}[scale=.8]
\draw [->](-4,1) -- (-3,1);\node[left] at (-3,1.5) {$B_2$};
\draw (-1,0) -- (2,0);
\draw[scale=1,ultra thick,blue] (-1,2)--(-1,0);
\draw[scale=1,ultra thick,red] (-1,2)--(2,0);
\node[left] at (-1,2) {$P_1$};
\node[left] at (-1,0) {$P_2$}; 
\node[below] at (0,0) {$P_3$}; \draw[black,fill] (0,0) circle (.5ex);
\node[right] at (2,0) {$P_4$};
\node[left] at (-1,1) {$T_{in}$};
\node[above right] at (.6,0.8) {$T_{out}$};
\begin{scope}[shift={(7,0)}]
\draw (-1,0) -- (0,0) (-1,2)--(2,2);
\draw[scale=1,ultra thick,blue] (-1,2)--(-1,0);
\draw[scale=1,ultra thick,red] (0,0)--(2,2);
\node[left] at (-1,2) {$P'_2$};
\node[left] at (-1,0) {$P'_3$}; 
\node[below] at (0,0) {$P'_4$}; \draw[black,fill] (0,0) circle (.5ex);
\node[right] at (2,2) {$P'_1$};
\node[left] at (-1,1) {$T'_{out}$};
\node[right] at (1,0.8) {$T'_{in}$};
\end{scope}
\end{tikzpicture}
\caption{(SC'), Case 4, $\mu_2$ or $\mu_3$. (Left: projection to $xz$-plane. Right: projection to $xz$-plane followed by reflection about $z$-axis.)}\label{fig:SCcase4mu2}
\end{figure}
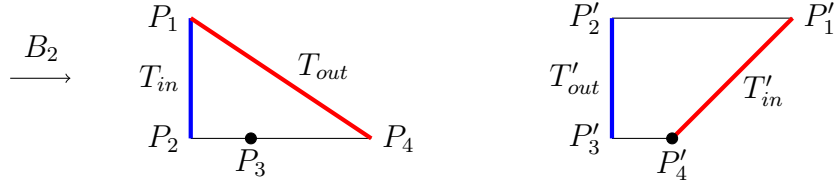

For $\mu_3$: Take $(i,j,k)=(3,1,2)$. Vectors $B'_3, B'_1, B'_2,\vv'_4$ are in circular order because $B'_3=(c',0,0)$ and $B'_2=(a,0,0)$ are in opposite directions.

\subsubsection{Proof of (SC') Case 4} Suppose  $a,b,c>0$. Renumbering vertices $(1,2,3)$ as $(2,3,1)$ or $(3,1,2)$ if necessary, we can assume that $B_1, B_2, B_3,\vv_4$ are in circular order (and still satisfy $b_{12},b_{23},b_{31}>0$); thus
\begin{equation}\label{case 5 circular implies}
(-ad_2+c'd_3,a'd_1-bd_3,-cd_1+b'd_2)=\vv_4=\lambda_1 B_1+\lambda_3 B_3=(-c'\lambda_3,-a'\lambda_1+b\lambda_3, c\lambda_1)
\end{equation}
for some real numbers  $\lambda_1,\lambda_3\ge 0$.
So by Lemma \ref{quadrilateral}, we get the same expression for $P_1,\dots,P_4$ as in Case 1:
 $$\aligned
 &{P_1=(-d_1,-d_2+a'd_1,-d_3-cd_1+b'd_2),}\\
 &{P_2=(-d_1,-d_2,-d_3+b'd_2),}\\
 &{P_3=(-d_1+ad_2,-d_2,-d_3),}\\
 &{P_4=(-d_1+ad_2-c'd_3,-d_2+bd_3,-d_3).}
 \endaligned
 $$
It is easy to check that ${\bf d}=-\minvec(P_1,\dots,P_4)$ by observing $ad_2-c'd_3=c'\lambda_3\ge0$ and $-cd_1+b'd_2=c\lambda_1\ge0$.

\medskip
\noindent\underline{To show that the quadrilateral changes as expected under the mutation $\mu_1$:}

If $\mu_1(Q)$ is acyclic, then we can apply the previous argument for $\mu_1(Q)$ to conclude that the quadrilateral is compatible with the mutation. So  in below we assume that $\mu_1(Q)$ is still cyclic, i.e., $ac-b'>0$, or equivalently, $a'c'-b>0$. 

By changing the initial seed from $\Sigma_{t_0}$ to $\mu_1(\Sigma_{t_0})$, we substitute $x_1$ by $(p_1^+x_2^{a'}+p_1^-x_3^c)/x_1$ in $x[{\bf d}]$, and get $x'[{\bf d}']$. Using Lemma \ref{change of support}, we obtain that the support of $x'[{\bf d}']$ lies in the quadrilateral $P'_1P'_2P'_3P'_4$, determined by $\alpha_1(P_1P_4P_3)=P'_1P'_4P'_3$ and $\beta_1(P_2P_3)=P'_1P'_2$. Thus 
$P'_1=\alpha_1(P_1)=\beta_1(P_2), P'_2=\beta_1(P_3), P'_3=\alpha_1(P_3),P'_4=\alpha_1(P_4)$, and more explicitly
$$\begin{array}{rclcr}
P'_1&=&(d_1,&-d_2,&-d_3+b'd_2-cd_1),\\
P'_2&=&(d_1-ad_2,&-d_2,&-d_3-cd_1+acd_2),\\
P'_3&=&(d_1-ad_2,&(aa'-1)d_2-a'd_1,&-d_3),\\
P'_4&=&(d_1-ad_2+c'd_3,&-a'd_1+(aa'-1)d_2+(b-a'c')d_3,&-d_3).
\end{array}
$$ 
See Figure \ref{fig:SCcase5mu1}. 

\begin{figure}[ht]
\begin{tikzpicture}
\draw [->](-3,2) -- (-3,1);\node[left] at (-3,1.5) {$B_1$};
\draw (-1,3) -- (-1,0);
\draw[scale=1,ultra thick,blue] (-1,3)--(0,2)--(1,0);
\draw[scale=1,ultra thick,red] (-1,0)--(1,0);
\node[left] at (-1,3) {$P_1$};
\node[left] at (-1,0) {$P_2$};
\node[right] at (0,2) {$P_4$};
\node[right] at (1,0) {$P_3$};
\node[above] at (0,2.5) {$T_{in}$};
\node[above] at (0,0) {$T_{out}$};
\begin{scope}[shift={(7,0)}]
\draw (1,3) -- (1,0);
\draw[scale=1,ultra thick,blue] (-1,0)--(0,2)--(1,3);
\draw[scale=1,ultra thick,red] (-1,0)--(1,0);
\node[left] at (-1,0) {$P'_1$};
\node[right] at (1,3) {$P'_3$};
\node[left] at (0,2) {$P'_4$};
\node[right] at (1,0) {$P'_2$};
\node[above left] at (-.5,1) {$T'_{out}$};
\node[above] at (0,0) {$T'_{in}$};
\end{scope}
\end{tikzpicture}
\caption{(SC'), Case 4, $\mu_1$. (Left: projection to $xy$-plane. Right: projection to $xy$-plane followed by reflection about $y$-axis.)}\label{fig:SCcase5mu1}
\end{figure}
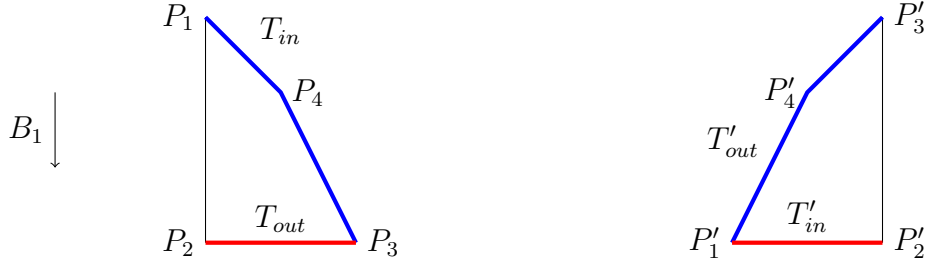

We claim that this $x'[{\bf d}']$ also satisfies (SC). If $d'_1<0$, it is trivially true. So we assume $d'_1\ge0$.
Denote
$$ 
B'=\mu_1(B)=\begin{bmatrix}0&-a&c'\\a'&0&b-a'c'\\-c&ac-b'&0\end{bmatrix}
$$
By \eqref{min formular for d-vector}, we have ${\bf d}=\minvec(P_1',P_2',P_3',P_4')$, thus
 $d'_1=ad_2-d_1$, $d'_2=d_2$, $d'_3=d_3$, because $d_1'=ad_2=d_1\ge 0$. The vector $\vv'_4$ is equal to $\vv'_4=-\vv'_1-\vv'_2-\vv'_3=(ad_2-c'd_3, a'd_1-aa'd_2+(a'c'-b)d_3,b'd_2-cd_1)$.

We show that the conditions in  Lemma \ref{quadrilateral} (1) are all satisfied for $(i,j,k)=(2,1,3)$:

For (1a): We have $b'_{21}=a'\ge0$ and $b'_{13}=c'\ge0$.

For (1b): This is straightforward.

For (1c): We have $B'_2, B'_1, B'_3, \vv'_4$ are in circular order because $B'_2, B'_1, B'_3$ are not in the same half plane, and $\vv'_4=(b'd_2-cd_1)/(ac-b')B'_2-(a'd_1-aa'd_2+(a'c'-b)d_3)/(a'c'-b)B'_3$ where the first coefficient $=c\lambda_1/(ac-b')\ge0$, the second coefficient $=(a'\lambda_1+(a'c'-b)\lambda_3)/(a'c'-b)\ge0$. 
 
\medskip

\noindent\underline{To show that the quadrilateral changes as expected under the mutation $\mu_2$:}

Like above, we can assume $\mu_2(Q)$ is cyclic, i.e. $ab-c'>0$, $a'b'-c>0$.
Define
$$ 
B'=[b'_{ij}]=\begin{bmatrix}0&-a&ab-c'\\a'&0&-b\\c-a'b'&b'&0\end{bmatrix}
$$
 There are three cases to consider:

(i) Suppose $-a'd_1+bd_3<0$. See Figure \ref{fig:SCcase5mu2i}.

\begin{figure}[ht]
\begin{tikzpicture}
\draw [->](-3,1) -- (-2,1);\node[above] at (-2.5,1) {$B_2$};
\draw[scale=1,ultra thick,blue]  (-1,3) -- (-1,0);
\draw(-1,0)--(2,0);
\draw[scale=1,ultra thick,red] (2,0)--(1,2)--(-1,3);
\node[left] at (-1,3) {$P_1$};
\node[left] at (-1,0) {$P_2$};
\node[right] at (1,2) {$P_4$};
\node[right] at (2,0) {$P_3$};
\node[left] at (-1,1.5) {$T_{in}$};
\node[right] at (1.4,1.1) {$T_{out}$};
\begin{scope}[shift={(7,0)}]
\draw[scale=1,ultra thick,blue]  (-1,3) -- (-1,0);
\draw(-1,3)--(1.3,3);
\draw[scale=1,ultra thick,red] (-1,0)--(1,2)--(1.3,3);
\node[left] at (-1,3) {$P'_2$};
\node[left] at (-1,0) {$P'_3$};
\node[right] at (1,2) {$P'_4$};
\node[right] at (1.3,3) {$P'_1$};
\node[left] at (-1,1.5) {$T'_{out}$};
\node[below right] at (0.5,1.3) {$T'_{in}$};
\end{scope}
\end{tikzpicture}
\caption{(SC'), Case 4, $\mu_2$, (i). (Left: projection to $xy$-plane. Right:  projection to $xy$-plane followed by reflection about $x$-axis.)}\label{fig:SCcase5mu2i}
\end{figure}
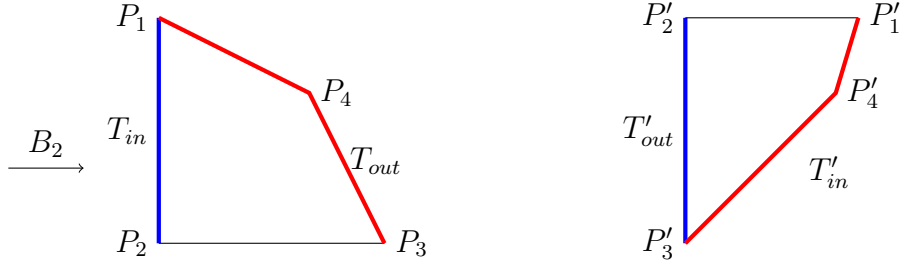

The quadrilateral ${\bf P_d}$ is determined by $\alpha_2(P_1P_2)=P'_2P'_3$ and  $\beta_2(P_1P_4P_3)=P'_1P'_4P'_3$. Thus
$$\aligned
&{P'_1=\beta_2(P_1)=((aa'-1)d_1-ad_2,d_2-a'd_1,-d_3-cd_1+b'd_2),}\\
&{P'_2=\alpha_2(P_1)=(-d_1,d_2-a'd_1,-d_3+(a'b'-c)d_1),}\\
&{P'_3=\alpha_2(P_2)=\beta_2(P_3)=(-d_1,d_2,-d_3),}\\
&{P'_4=\beta_2(P_4)=(-d_1+(ab-c')d_3,d_2-bd_3,-d_3).}
\endaligned
$$ 
We claim that $x'[{\bf d}']$ also satisfies (SC'). Like before, assume $d'_2\ge0$.
First compute:

$d'_1=d_1$, 
 
$d'_2=a'd_1-d_2$ (because of the assumption $-a'd_1+bd_3<0$), 

$d'_3=d_3$ (recall $-cd_1+b'd_2=c\lambda_1\ge0$).

$\vv'_4=(aa'd_1-ad_2-(ab-c')d_3,-a'd_1+bd_3,-cd_1+b'd_2)$.

\noindent Then show that the conditions in  Lemma \ref{quadrilateral} (1) hold for $(i,j,k)=(2,1,3)$:

(1a) We have $b'_{21}=a\ge0$ and $b'_{13}=ab-c\ge0$.

(1b) This is straightforward.

(1c) We have $B'_2, B'_1, B'_3$ are not strictly in the same half plane, and $\vv_4'=\lambda'_2 B_2'+\lambda'_2 B_3'$ with $\lambda'_2=(-cd_1+b'd_2)/b'=c\lambda_1/b\ge0$, $\lambda'_3=(a'd_1-bd_3)/b>0$, so $B'_2, B'_1, B'_3, \vv'_4$ are in circular order. 
\medskip

(ii) Suppose $-a'd_1+bd_3>0$.  See Figure \ref{fig:SCcase5mu2ii}.

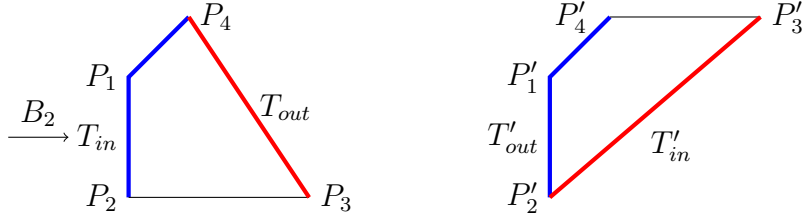
\begin{figure}[ht]
\begin{tikzpicture}[scale=.8]
\draw [->](-3,1) -- (-2,1);\node[above] at (-2.5,1) {$B_2$};
\draw[scale=1,ultra thick,blue]  (0,3)--(-1,2) -- (-1,0);
\draw(-1,0)--(2,0);
\draw[scale=1,ultra thick,red] (2,0)--(0,3);
\node[left] at (-1,2) {$P_1$};
\node[left] at (-1,0) {$P_2$};
\node[right] at (0,3) {$P_4$};
\node[right] at (2,0) {$P_3$};
\node[left] at (-1,1) {$T_{in}$};
\node[right] at (1,1.5) {$T_{out}$};
\begin{scope}[shift={(7,0)}]
\draw[scale=1,ultra thick,blue]  (-1,0)--(-1,2)--(0,3);
\draw(0,3)--(2.5,3);
\draw[scale=1,ultra thick,red] (-1,0)--(2.5,3);
\node[left] at (-.2,3) {$P'_4$};
\node[left] at (-1,0) {$P'_2$};
\node[left] at (-1,2) {$P'_1$};
\node[right] at (2.5,3) {$P'_3$};
\node[left] at (-1,1) {$T'_{out}$};
\node[below right] at (0.5,1.3) {$T'_{in}$};
\end{scope}
\end{tikzpicture}
\caption{(SC'), Case 4, $\mu_2$, (ii). (Left: projection to $xy$-plane. Right:  projection to $xy$-plane followed by reflection about $x$-axis.)}\label{fig:SCcase5mu2ii}
\end{figure}

The quadrilateral ${\bf P_d}$ is determined by $\alpha_2(P_4P_1P_2)=P'_4P'_1P'_2$ and  $\beta_2(P_3P_4)=P'_2P'_3$. Thus
$$\aligned
&{P'_1=\alpha_2(P_1)=(-d_1,d_2-a'd_1,-d_3+(a'b'-c)d_1),}\\
&{P'_2=\alpha_2(P_2)=\beta_2(P_3)=(-d_1,d_2,-d_3),}\\
&{P'_3=\beta_2(P_4)=(-d_1+(ab-c')d_3,d_2-bd_3,-d_3),}\\
&{P'_4=\alpha_2(P_4)=(-d_1+ad_2-c'd_3,d_2-bd_3,-b'd_2+(bb'-1)d_3).}
\endaligned
$$ 
We claim that this $x'[{\bf d}']$ also satisfies (SC). Like before, assume $d'_2\ge0$. We have
\[\begin{array} {rcl}
d'_1&=&-\min(-d_1,-d_1+(ab-c')d_3,-d_1+ad_2-c'd_3)\\&=&d_1 \qquad\textup{ because $ab-c'>0$ and $ad_2-c'd_3=c'\lambda_3\ge0$}, 
 \\
d'_2&=&-\min(d_2-a'd_1,d_2,d_2-bd_3)=bd_3-d_2 \qquad\textup{ because $a'd_1<bd_3$}, 
\\
d'_3&=&-\min(-d_3+(a'b'-c)d_1,-d_3,-b'd_2+(bb'-1)d_3)\\
&=&-\min(-d_3+(a'b'-c)d_1,-d_3,b'd'_2-d_3)
\\&=&d_3 \qquad \textup{ because $a'b'-c>0$ and $d'_2\ge0$},\\ 
\vv'_4&=&(-ad_2+c'd_3,bd_3-a'd_1,(a'b'-c)d_1+b'd_2-bb'd_3).
\end{array}\]
\noindent To show that the conditions in  Lemma \ref{quadrilateral} (1) hold for $(i,j,k)=(1,3,2)$, the only nontrivial condition is (1c) $B'_1, B'_3, B'_2, \vv'_4$ are in circular order. To see this, note that $B'_1, B'_3, B'_2$ are not in the same half plane, and $\vv_4'=\lambda'_1 B_1'+\lambda'_2 B_2'$ with $\lambda'_1=(bd_3-a'd_1)/a'>0$, $\lambda'_2=(ad_2-c'd_3)/a'=c'\lambda_3/a'\ge0$.

\medskip
(iii) Suppose $-a'd_1+bd_3=0$. See Figure \ref{fig:SCcase5mu2iii}.  The Newton polytope of $x'[{\bf d}']$ is a triangle. We can view this as a degenerate case of either (i) or (ii), and the proof of (SC') therein still holds.

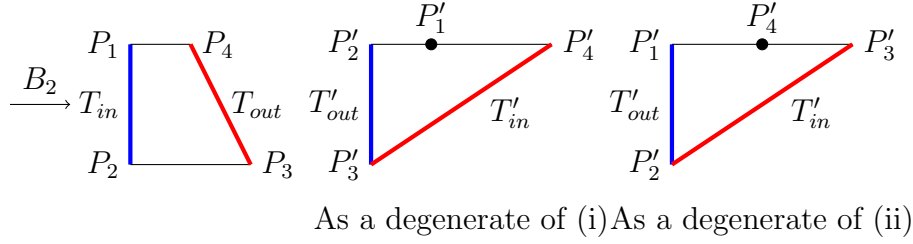
\begin{figure}[ht]
\begin{tikzpicture}[scale=.8]
\draw [->](-3,1) -- (-2,1);\node[above] at (-2.5,1) {$B_2$};
\draw[scale=1,ultra thick,blue]  (-1,2) -- (-1,0);
\draw(-1,0)--(1,0) (0,2)--(-1,2);
\draw[scale=1,ultra thick,red] (1,0)--(0,2);
\node[left] at (-1,2) {$P_1$};
\node[left] at (-1,0) {$P_2$};
\node[right] at (0,2) {$P_4$};
\node[right] at (1,0) {$P_3$};
\node[left] at (-1,1) {$T_{in}$};
\node[right] at (0.5,1) {$T_{out}$};
\begin{scope}[shift={(4,0)}]
\draw[scale=1,ultra thick,blue]  (-1,0)--(-1,2);
\draw(-1,2)--(2,2);
\draw[scale=1,ultra thick,red] (-1,0)--(2,2);
\node[above] at (0,2) {$P'_1$};\draw[black,fill] (0,2) circle (.5ex);
\node[left] at (-1,0) {$P'_3$};
\node[left] at (-1,2) {$P'_2$};
\node[right] at (2,2) {$P'_4$};
\node[left] at (-1,1) {$T'_{out}$};
\node[below right] at (.8,1.3) {$T'_{in}$};
\node[below] at (0.5,-.5) {As a degenerate of (i)};
\end{scope}
\begin{scope}[shift={(9,0)}]
\draw[scale=1,ultra thick,blue]  (-1,0)--(-1,2);
\draw(-1,2)--(2,2);
\draw[scale=1,ultra thick,red] (-1,0)--(2,2);
\node[above] at (0.5,2) {$P'_4$};\draw[black,fill] (0.5,2) circle (.5ex);
\node[left] at (-1,0) {$P'_2$};
\node[left] at (-1,2) {$P'_1$};
\node[right] at (2,2) {$P'_3$};
\node[left] at (-1,1) {$T'_{out}$};
\node[below right] at (.8,1.3) {$T'_{in}$};
\node[below] at (0.5,-.5) {As a degenerate of (ii)};
\end{scope}
\end{tikzpicture}
\caption{(SC'), Case 4, $\mu_2$, (iii). (Left: projection to $xy$-plane. Middle and Right: projection to $xy$-plane followed by reflection about $x$-axis.)}\label{fig:SCcase5mu2iii}
\end{figure}

\medskip

\noindent\underline{To show that the quadrilateral changes as expected under the mutation $\mu_3$:}

Assume $\mu_3(Q)$ is cyclic, i.e. $bc-a'>0$, $b'c'-a>0$. See Figure  \ref{fig:SCcase5mu3}. We have
$$ 
B':=\mu_3(B)=\begin{bmatrix}0&a-b'c'&c'\\bc-a'&0&-b\\-c&b'&0\end{bmatrix}
$$
The quadrilateral ${\bf P_d}$ is determined by $\alpha_3(P_2P_3)=P'_2P'_3$ and $\beta_3(P_2P_1P_4)=P'_1P'_4P'_3$. See Figure \ref{fig:SCcase5mu3}. Thus
$$\aligned
&{P'_1=\beta_3(P_1)=(-d_1,(a'-bc)d_1+(bb'-1)d_2-bd_3,cd_1-b'd_2+d_3),}\\
&{P'_2=\beta_3(P_2)=(-d_1,(bb'-1)d_2-bd_3,-b'd_2+d_3),}\\
&{P'_3=\alpha_3(P_2)=(-d_1+b'c'd_2-c'd_3,-d_2,-b'd_2+d_3),}\\
&{P'_4=\alpha_3(P_3)=\beta_3(P_4)=(-d_1+ad_2-c'd_3,-d_2,d_3).}\\
\endaligned
$$

\begin{figure}[ht]
\begin{tikzpicture}
\draw [->](-3,1) -- (-2,1);\node[above] at (-2.5,1) {$B_3$};
\draw[scale=1,ultra thick,blue]  (-1,2) -- (-1,0);
\draw(-1,0)--(0,0);
\draw[scale=1,ultra thick,red] (0,0)--(1,1)--(-1,2);
\node[left] at (-1,2) {$P_2$};
\node[left] at (-1,0) {$P_3$};
\node[right] at (1,1) {$P_1$};
\node[right] at (0,0) {$P_4$};
\node[left] at (-1,1) {$T_{in}$};
\node[right] at (0.5,1.7) {$T_{out}$};
\begin{scope}[shift={(7,0)}]
\draw[scale=1,ultra thick,blue]  (-1,0)--(-1,2);
\draw(-1,2)--(1.5,2);
\draw[scale=1,ultra thick,red] (-1,0)--(1,1)--(1.5,2);
\node[below right] at (1,1) {$P'_1$};
\node[left] at (-1,0) {$P'_4$};
\node[left] at (-1,2) {$P'_2$};
\node[right] at (1.5,2) {$P'_2$};
\node[left] at (-1,1) {$T'_{out}$};
\node[left] at (0.7,1) {$T'_{in}$};
\end{scope}
\end{tikzpicture}
\caption{(SC'), Case 4, $\mu_3$. (Left: projection to $yz$-plane. Right:  projection to $yz$-plane followed by reflection about $y$-axis.)}\label{fig:SCcase5mu3}
\end{figure}
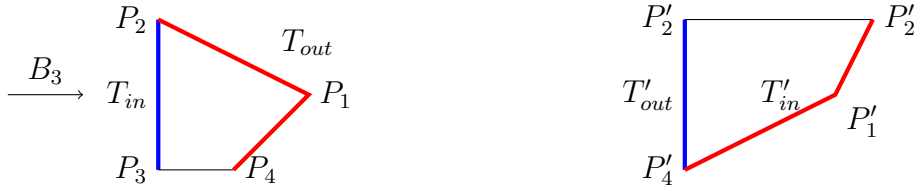

To show $x'[{\bf d}']$ satisfies (SC'). We can assume $d'_3\ge0$ as before. Compute
 
\[\begin{array}
 {rcl} 
 d'_3&=&-\min(cd_1-b'd_2+d_3, -b'd_2+d_3,d_3)=b'd_2-d_3,\\ 
 d'_2&=&-\min((a'-bc)d_1+(bb'-1)d_2-bd_3, (bb'-1)d_2-bd_3,-d_2)\\
 &=&-\min(-d_2+(bc-a')\lambda_1+b\lambda_3, bd'_3-d_2, -d_2)=d_2, 
 \\
 d'_1&=&-\min(-d_1,-d_1+b'c'd_2-c'd_3,-d_1+ad_2-c'd_3)\\
 &=&-\min(-d_1,-d_1+c'd'_3,-d_1+c'\lambda_3)=d_1, 
 \\
\vv'_4&=&(-ad_2+c'd_3,(a'-bc)d_1+bb'd_2-bd_3,cd_1-b'd_2).  
\end{array}
\]

We claim the conditions in  Lemma \ref{quadrilateral} (1) holds for $(i,j,k)=(1,3,2)$. Indeed:

For (1a): We have $b'_{13}=c'\ge0$, $b'_{32}=b'>0$.

For (1b): This is a straightforward check.

For (1c): We have $B'_1, B'_3, B'_2$ are not in the same half-plane, and $\vv_4'=\lambda'_1B'_1+\lambda'_2B'_2$ with coefficients 
$\lambda'_1=((a'-bc)d_1+bb'd_2-bd_3)/(bc-a')=((bc-a')\lambda_1+b\lambda_3)/(bc-a')\ge0$, $\lambda'_2=(-ad_2+c'd_3)/(a-b'c')=c'\lambda_3/(b'c'-a)\ge0$. 
 
This completes the proof of (SC') for all four cases.

\subsection{Proof of (NC')} 
The existence of $e({\bf p})$ with nonzero constant term follows from (NC). To show the second part of (NC'), it suffices to show that the property that $e({\bf p})$ is a monomial for each vertex of ${\bf P_d}$ is invariant under mutation. Suppose therefore that ${\bf p}\in\mathbb{Z}^3$ is a vertex of the weakly convex quadrilateral ${\bf P_d}$  of the cluster variable $x[\mathbf{d}]$ with respect to the initial seed $\Sigma_{t_0}$, and suppose that $e({\bf p})=y_1^{r_1}y_2^{r_2}y_3^{r_3}$. Let $\mathbf{P}'$ be the weakly convex quadrilateral of the same cluster variable but with respect to the seed $\Sigma_{\mu_1(t_0)}$. Thus $\mathbf{P'}$ is
 obtained from $\mathbf{P}$ by substituting $x_1$ by $(M_1+M_2)/x_1$ (substituting $x_2, x_3$ can be argued similarly). By Lemma \ref{lemma:support change}, the vertices of $|{\bf P'}|$ are obtained as either 

(a) $\alpha_1({\bf p})$, where ${\bf p}$ is a vertex of $|{\bf P_d}|$,  the intersection of the line ${\bf p}+\mathbb{R}B_1$ with the quadrilateral ${\bf P_d}$ is a line segment $|{\bf pq}|$ with ${\bf q}={\bf p}+rB_1$ ($r\ge0$), or 

(b) $\beta_1({\bf q})$, where ${\bf q}$ is a vertex of $|{\bf P_d}|$, the intersection of the line ${\bf q}+\mathbb{R}B_1$ with the quadrilateral ${\bf P_d}$ is a line segment $|{\bf pq}|$ with ${\bf p}={\bf q}-rB_1$ ($r\ge0$). 

We only need to consider (a) because (b) can be argued similarly.
We use Lemma \ref{change of support} with $f=\sum e(\mathbf{p}) x^{\mathbf{p}}$ the Laurent expansion of $x[\mathbf{d}]$ in the seed $\Sigma_{t_0}$ and $g$ its Laurent expansion in the seed $\Sigma_{\mu_1(t_0)}$.  The vertex $\mathbf{p}$ corresponds to the term $a_0x^{\mathbf{p}}$ (with $b_0=0$) in the lemma and it transforms to the new vertex $\mathbf{p}'$ yielding the term $a_0'x^{\mathbf{p}'}$ (with $b_0'=0$) in $g$. The lemma implies $a_0'=(p^+)^{p_1}a_0$ and thus
$$e'({\bf p}')=(p^+)^{p_1}e({\bf p})=\Big(\prod_i y_i^{[e_i]_+}\Big)^{p_1} e(\mathbf{p}) =\prod_i y_i^{r_i+p_1[e_i]_+}$$
which is a Laurent monomial in $y_1,y_2,y_3$, so it is also a Laurent monomial in $y_1',y_2',y_3'$. By induction on the number of mutations it follows that $e({\bf p})$ is a monomial for all vertices ${\bf p}$ of all $|{\bf P_d}|$.
This completes the proof of condition (NC') and of Theorem \ref{main theorem}.
	
\section{Example}
\label{sect 4}	
\begin{example}\label{eg:x621}
Consider $a=b=-c=2$: 
$$Q=\xymatrix{ &2\ar[dr]|2&\\1\ar[ru]|2\ar[rr]|2&&3} \qquad B=\begin{bmatrix} 0&2&2\\-2&0&2\\-2&-2&0 \end{bmatrix}
$$
Consider the cluster variable $x[6,2,1]$, obtained by the mutation sequence $\{1,2,3\}$.
The quadrilateral ${\bf P_d}$ is computed as in Lemma \ref{quadrilateral} as follows. Start with vertices
\[\begin{array}{cccccccccccccc}
\tilde P_1&=&(0,0,0)&&&\tilde P_2&=&d_1B_1&=&(0,-12,-12)\\
\tilde P_3&=&\tilde P_1+d_2B_2&=&(4,-12,-16)\qquad&\tilde P_4&=& \tilde P_3+d_3B_3&=&(6,-10,-16)
\end{array}\]
and then shift by the vector 
\[-\minvec(\tilde P_1,\tilde P_2,\tilde P_3,\tilde P_4)-{\bf d}=-(0,-12,-16)-(6,2,1)=(-6,10,15)\]
to obtain the vertices 
of $P_{\bf d}$ as follows
\[P_1=(-6,10,15), P_2=(-6,-2,3), P_3=(-2,-2,-1), P_4=(0,0,-1). \]

On the other hand, the cluster has the following Laurent expansion.
$$\begin{array}{rcl}x[6,2,1]&=& x_1^{-6}x_2^{-2}x_3^{-1}\big(x_{2}^{12} x_{3}^{16} + 6 x_{2}^{10} x_{3}^{14} y_{1} + 2 x_{1}^{2}
x_{2}^{8} x_{3}^{10} y_{1}^{2} y_{2} + 15 x_{2}^{8} x_{3}^{12} y_{1}^{2}
\\[5pt]
&&+ 8 x_{1}^{2} x_{2}^{6} x_{3}^{8} y_{1}^{3} y_{2}+ 20 x_{2}^{6}
x_{3}^{10} y_{1}^{3} + x_{1}^{4} x_{2}^{4} x_{3}^{4} y_{1}^{4} y_{2}^{2}
+ 12 x_{1}^{2} x_{2}^{4} x_{3}^{6} y_{1}^{4} y_{2} \\[5pt]
&& 
+ x_{1}^{6} x_{2}^{2}
y_{1}^{6} y_{2}^{2} y_{3}  + 15 x_{2}^{4} x_{3}^{8} y_{1}^{4}
+2x_{1}^{4} x_{2}^{2} x_{3}^{2} y_{1}^{5} y_{2}^{2} 
+ 8 x_{1}^{2}
x_{2}^{2} x_{3}^{4} y_{1}^{5} y_{2} + 6 x_{2}^{2} x_{3}^{6} y_{1}^{5}\\[5pt]
&& +
x_{1}^{4} y_{1}^{6} y_{2}^{2} + 2 x_{1}^{2} x_{3}^{2} y_{1}^{6} y_{2} +
x_{3}^{4} y_{1}^{6}\big).\end{array}$$

We project the support to 2nd and 3rd exponents of $x$ (that is, draw a point of coordinate $(i,j)$ if $x_2^ix_3^j$ appears in $x[6,2,1]$). We obtain the  picture in Figure \ref{example1}. 
\begin{figure}[ht]
  \includegraphics[width=.4\textwidth]{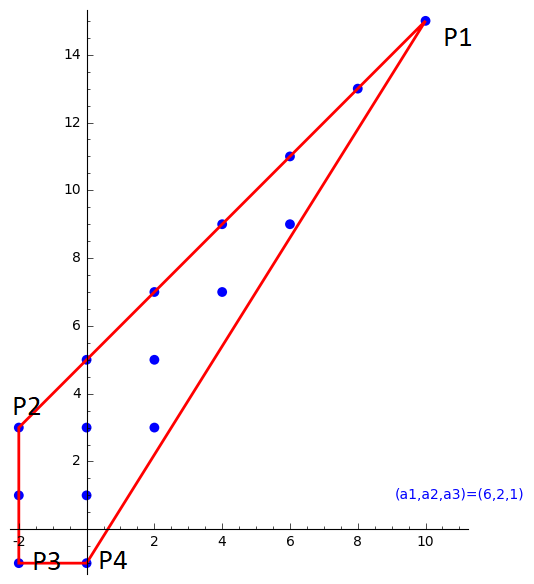}
  \caption{Support of $x[6,2,1]$. The blue dots are the support, the red polytope is the Newton polytope.
}\label{example1} 
\end{figure}

The monomials in corresponding positions are:
{\tiny
$$\begin{bmatrix}
&&&&&&x_1^{-6}x_{2}^{10} x_{3}^{15}\\
&&&&&6x_1^{-6}x_{2}^{8} x_{3}^{13}y_1\\
&&&&15x_1^{-6}x_{2}^{6} x_{3}^{11}y_1^2\\
&&&20x_1^{-6}x_2^4x_3^9y_1^3&2x_1^{-4}x_{2}^{6} x_{3}^{9}y_1^2y_1\\
&&15x_1^{-6}x_2^2x_3^7y_1^4&8x_1^{-4}x_2^4x_3^7y_1^3y_2\\
&6x_1^{-6}x_3^5y_1^5&
12x_1^{-4}x_2^2x_3^5y_1^4y_2
\\
x_1^{-6}x_2^{-2}x_3^3y_1^6&
8x_1^{-4}x_3^3y_1^5y_2&x_1^{-2}x_2^2x_3^3y_1^4y_2\\
2x_1^{-4}x_2^{-2}x_3y_1^6y_2&2x_1^{-2}x_3y_1^5y_2^2
\\
x_1^{-2}x_2^{-2}x_3^{-1}y_1^6y_2^2&x_3^{-1}y_1^6y_2^2y_3
\end{bmatrix}$$
}

Let us consider what happens if we substitute $x_1$ by its mutation $(p_1^+x_2^{a}x_3^{-c}+p_1^-)/x_1=(x_2^2x_3^2+y_1)/x_1$ in $x[6,2,1]$ (because $p_1^+=1$, $p_1^-=y_1$). The 7 terms with $x_1^{-6}$ (that is, the line segment $|P_1P_2|$) add up to be
$$f=x_1^{-6}x_2^{10}x_3^{15}
+6x_1^{-6}x_2^8x_3^{13}y_1
+\cdots
+x_1^{-6}x_2^{-2}x_3^3y_1^6=x_2^{-2}x_3^3(\frac{x_2^2x_3^2+y_1}{x_1})^6$$
So after the substitution, we get $x_1^6x_2^{-2}x_3^3$, 
which also follows in general using Lemma \ref{change of support} (1), with ${\bf p}=(-6,10,15)$, ${\bf q}=(-6,-2,3)={\bf p}+6(0,-2,-2)$: since the first and last term of $g$ in that lemma will have exponents
${\bf p}'=\alpha_1({\bf p})={\bf q}'=\beta_1({\bf q})=(6,-2,3)$, which is $P_1'$. 

In general, let $f$ be the sum of the terms containing $x_1^{-constant}$. Then after substitution the two endpoints are mapped by $\alpha_1$ and $\beta_1$, both being linear maps. Consider the line segments parallel to line $P_1P_2$ and in the quadrilateral $P_1P_2P_3P_4$. The two ends of each line segment are mapped by maps $\alpha_1$ and $\beta_1$. So the segment $P_1P_4$ (i.e. the set of right endpoints) is mapped by $\alpha_1$, $P_2P_3P_4$ (i.e. the set of left endpoints) is mapped by $\beta_1$. Their image encloses a new quadrilateral ${\bf P_d}'=P_1'P_2'P_3'P_4'$ (which actually degenerates to a triangle) with
$$
P'_1=(6,-2,3),\quad
P'_2=(2,-2,-1),\quad
P'_3=(0,0,-1),\quad
P'_4=(0,0,-1).
$$ 
\end{example}

\begin{remark}\label{remark:F-poly}
Note that the $F$-polynomial of $x[6,2,1]$ is
$1 + 6 y_{1} + 2  y_{1}^{2} y_{2} + 15  y_{1}^{2}
+ 8  y_{1}^{3} y_{2} 
+ 20 y_{1}^{3} +  y_{1}^{4} y_{2}^{2}
+ 12 y_{1}^{4} y_{2} + 
y_{1}^{6} y_{2}^{2} y_{3} + 15  y_{1}^{4}
 +2 y_{1}^{5} y_{2}^{2}
+ 8 y_{1}^{5} y_{2} + 6 y_{1}^{5} +
 y_{1}^{6} y_{2}^{2} + 2  y_{1}^{6} y_{2} +
 y_{1}^{6}$. Its Newton polytope has vertices
 $(0, 0, 0), (6, 2, 1), (6, 2, 0), (6, 0, 0), (4, 2, 0)$. In contrast, the region ${\bf F_d}$ as defined in Corollary \ref{cor of main thm} is a convex polyhedron with vertices $(0, 0, 0)$,  $(6, 0, 0)$,   $(8, 0, 2)$,   $(6,2,0)$, $(5, 3, 0)$,  $(8, 0, 3)$, which contains the Newton polytope of the $F$-polynomial as a proper subset.
\end{remark}

\section{Quantum analogue}\label{sect 5} 
In this section, we prove that Theorem \ref{main theorem} generalizes to the quantum cluster algebras introduced in \cite{BZ}.
We consider here only principal coefficients. The statement with non-principal coefficients should follow easily from this.

First we fix some notation. For a nonzero integer $\delta$, define
$[n]_\delta=(v^{\delta n}-v^{-\delta n})/(v^\delta-v^{-\delta})$ (note that $[n]_\delta=[n]_{-\delta}$)
and define the quantum binomial coefficient (where $k,n\in\mathbb{Z}$, $k\ge0$) 
$${n\brack k}_\delta=\frac{[n]_\delta [n-1]_\delta\cdots[n-k+1]_\delta}{[k]_\delta[k-1]_\delta\cdots[1]_\delta}$$
Define
$$(x+y)_\delta^n=\sum_{k\ge0} {n\brack k}_\delta x^ky^{n-k}$$
For example, $$(x+y)_5^3=y^3+(v^5+1+v^{-5})xy^2+(v^5+1+v^{-5})x^2y+y^3.$$

\begin{remark}
To see the motivation of the above definition: 
consider two quasi-commuting variables $X$, $Y$ with $YX=v^{2\delta}XY$. Denote 
$X^{(i,j)}:=v^{ij\delta}X^iY^j$. Then the above quantum binomial coefficients satisfy
$$(X+Y)^n=\sum_{k\ge0} {n\brack k}_{\delta}v^{k(n-k)\delta}X^kY^{n-k}=\sum_{k\ge0} {n\brack k}_{\delta}X^{(k,n-k)}.$$
\end{remark}
\medskip

Let $B$ be a skew-symmetrizable matrix, $D$ a positive diagonal matrix such that $DB$ is skew-symmetric. Let
$$\Lambda=\begin{bmatrix}0&-D\\D&-DB\\\end{bmatrix}, \quad \tilde{B}=\begin{bmatrix}B\\I\end{bmatrix}.$$
Recall that the based quantum torus $\mathcal{T}(\Lambda)$ is the $\mathbb{Z}[v^{\pm}]$-algebra with a distinguished $\mathbb{Z}[v^{\pm}]$-basis $\{X^e: e\in\mathbb{Z}^{2n}\}$ and the multiplication is given by
$$X^eX^f=v^{\Lambda(e,f)}X^{e+f}\quad (e,f\in\mathbb{Z}^{2n})$$
where $\Lambda(e,f)=e^T\Lambda f$.

We introduce the following convention to represent a quantum Laurent polynomial using a commutative Laurent polynomial: namely, we define a function 
$$\varphi: \mathbb{Z}[v^{\pm}][x_1,\dots,x_6]\to\mathcal{T}(\Lambda)$$
$$\sum a_{i_1,\dots,i_6}x_1^{i_1}\cdots x_6^{i_6}
\mapsto
\sum a_{i_1,\dots,i_6}X^{(i_1,\dots,i_6)}$$
Recall that we denote $y_1=x_4, y_2=x_5, y_3=x_6$. 

\medskip 

We have the following generalization of our main result.

\begin{theorem}\label{main quantum theorem}

A quantum cluster variable $x[{\bf d}]$ with d-vector ${\bf d}$ can be written as

$$x[{\bf d}]=\varphi\sum_{{\bf p}\in\mathbb{Z}^3} e({\bf p}) x^{\bf p}=\varphi\sum_{p_1,p_2,p_3} e(p_1,p_2,p_3) x_1^{p_1}x_2^{p_2}x_3^{p_3}$$ 
where $e({\bf p})\in\mathbb{Z}[v^{\pm}][y_1,y_2,y_3]$
is uniquely characterized by the following conditions:
\begin{itemize}
\item[(SC)] (Support condition) 
The coefficient $e({\bf p})=0$ unless ${\bf p}\in {\bf P_d}$. Equivalently, the Newton polytope of $x[{\bf d}]$ is contained in ${\bf P_d}$.

\item[(NC)] (Normalization condition) There is only one $e({\bf p})$ which has a nonzero constant term, which must be 1. Moreover, the greatest common divisor of all $e({\bf p})$ is 1.

\item[(DC)] (Divisibility condition)
For each $k=1,2,3$, if $p_k<0$, then 
$$\big(\prod_{i=1}^3 x_i^{[-b_{ik}]_+}+y_k\prod_{i=1}^3 x_i^{[b_{ik}]_+}\big)^{-p_k}_{\delta_k}\textrm{ divides }\displaystyle\sum_{p_1,\dots,\widehat{p}_k,\dots, p_3} e(p_1,p_2,p_3)x_1^{p_1}x_2^{p_2}x_3^{p_3}$$
where the notation $\widehat{p}_k$ under the sum means that we have $p_k$ fixed and the other two $p_i$ run over all integers.
\end{itemize}

Moreover, (NC) can be replaced by:

\begin{itemize}
\item[(NC')] There is a coefficient $e({\bf p})$ with nonzero constant term, and for each vertex ${\bf p}$ of the convex hull $|{\bf P_d}|$, $e({\bf p})$ is a monomial in $y_1,y_2,y_3$.
\end{itemize}

And (SC) can be replaced by a stronger condition:

\begin{itemize}
\item[(SC')]  
The Newton polytope of $x[{\bf d}]$ is equal to ${\bf P_d}$.
\end{itemize}
\end{theorem}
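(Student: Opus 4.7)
The plan is to mirror the proof of Theorem \ref{main theorem}, replacing each commutative ingredient by its quantum analogue, and argue that the support (Newton polytope) data is unchanged because the quasi-commutation only introduces powers of $v$ into each monomial coefficient without altering the exponent vector. Throughout, I will identify elements of $\mathcal{T}(\Lambda)$ with their $\varphi$-preimages in $\mathbb{Z}[v^{\pm}][x_1^{\pm},\ldots,x_6^{\pm}]$, so ``Newton polytope'' and ``coefficient of $x^{\bf p}$'' make unambiguous sense.

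\textbf{Uniqueness.} Exactly as in the classical case, if $f\in\mathcal{T}(\Lambda)$ satisfies (SC)+(NC)+(DC) (or (SC)+(NC')+(DC)) for the data of $x[{\bf d}]$, then after changing initial seed to a seed $\Sigma_t$ containing $x[{\bf d}]$ as a cluster variable (using the quantum seed mutation of \cite{BZ}), the quadrilateral ${\bf P_d}$ collapses to a single point. Only a single pure monomial satisfies all three conditions at a point, and that monomial is forced to be $x[{\bf d}]|_t$, so $f=x[{\bf d}]$.

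\textbf{(DC) and upper cluster algebra.} The identity $x_k x_k'=\prod_i x_i^{[-b_{ik}]_+}+y_k\prod_i x_i^{[b_{ik}]_+}$ in the commutative setting is replaced in $\mathcal{T}(\Lambda)$ by the quantum exchange relation of \cite{BZ}, whose two quasi-commuting monomials give the quantum binomial $(\prod x_i^{[-b_{ik}]_+}+y_k\prod x_i^{[b_{ik}]_+})_{\delta_k}^{-p_k}$ upon raising to the $-p_k$-th power. Substituting the quantum mutation expression for $x_k$ in the Laurent expansion of $x[{\bf d}]$ and collecting powers of $x_k$ produces (DC) verbatim with $(\cdot)_{\delta_k}$ replacing the ordinary binomial. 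Exactly as in the commutative argument, (DC) for every $k$ places $x[{\bf d}]$ in the quantum upper bound, which by \cite{BZ} equals the quantum upper cluster algebra.

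\textbf{(NC).} The proof of Lemma \ref{F-polynomial 1} goes through verbatim in the quantum setting using the quantum $F$-polynomial recursion (Tran, and for skew-symmetrizable case one uses sign-coherence of $c$-vectors) together with the quantum separation formula; specializing the coefficient variables $y_i\to0$ forces every quantum $F$-polynomial to have constant term $1$. Hence exactly one $e({\bf p})$ in the quantum expansion of $x[{\bf d}]$ has nonzero constant term, and that constant term is $1$. For the gcd part, the same argument as in the commutative case applies: any common divisor $h\in\mathbb{Z}[v^{\pm}][y_1,y_2,y_3]$ of all $e({\bf p})$ must be coprime to the quantum binomial appearing in (DC) (using the non-degeneracy assumption), so $x[{\bf d}]/h$ still lies in the quantum upper cluster algebra; transferring to a seed containing $x[{\bf d}]$ and using Lemma \ref{C-matrix det} (which is a statement about $C$-matrices, hence identical in the quantum setting) shows that $h|_t$ has the same number of Laurent monomials in $y_1',y_2',y_3'$ as $h$ in $y_1,y_2,y_3$, forcing $h=\pm1$.

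\textbf{(SC') and (NC') by induction on mutation distance.} I will prove a quantum analogue of Lemma \ref{change of support} stating that if a quantum Laurent polynomial $F$ is the image under $\varphi$ of a commutative Laurent polynomial with Newton polytope $R$ of the shape described there, then after the quantum substitution $X_1\mapsto v^\ast (p^+ X_2^{[a']_+}X_3^{[-c]_+}+p^- X_2^{[-a']_+}X_3^{[c]_+})X_1^{-1}$ (with the $v^\ast$-factors coming from $\Lambda$), the image is the $\varphi$-image of a commutative Laurent polynomial whose Newton polytope is $R'$. The key observation is that the quasi-commutation relations only modify coefficients by monomials in $v$, which leaves every exponent vector unchanged; hence the Newton polytope transforms exactly as in the commutative case. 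With this in hand, the case-by-case geometric analysis of Section \ref{sect 3+} (Cases 1--4, including all mutations $\mu_1,\mu_2,\mu_3$) transfers verbatim to prove (SC') by induction on the distance in $\mathbb{T}$ from $t_0$ to a seed containing $x[{\bf d}]$. For (NC'), the quantum change-of-support lemma implies that the vertex-coefficient at a vertex ${\bf p}$ transforms to a coefficient of the form $v^N (p^+)^{p_1}\cdot e({\bf p})$, and since both $p^+$ and $e({\bf p})$ are (quantum) monomials in $y_1,y_2,y_3$, the new vertex-coefficient is also a quantum monomial.

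\textbf{Main obstacle.} The technical heart is the quantum substitution lemma: one must check that writing the two terms of the quantum exchange binomial as quasi-commuting variables $X,Y$ with $YX=v^{2\delta_k}XY$, raising to the $|p_k|$-th power genuinely produces the quantum binomial $(X+Y)^{|p_k|}_{\delta_k}$, and that these quantum binomials commute correctly with the remaining monomials in $F$ so that collecting like exponents yields the expected shape. The bookkeeping of $v$-powers from $\Lambda$ is the only substantive departure from Section \ref{sect 3+}; once one verifies that these $v$-powers aggregate into quantum binomial coefficients (this is a direct computation, essentially the quantum binomial theorem as formulated in the Remark after the definition of $(x+y)_\delta^n$), the rest of the proof is a line-by-line translation.
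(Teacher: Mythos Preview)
Your proposal is correct and follows essentially the same approach as the paper: the paper's proof is literally ``similar to Theorem \ref{main theorem}, the main difference is the quantum (DC), which follows from Lemma \ref{lem:quantumDC},'' and Lemma \ref{lem:quantumDC} is precisely the quantum substitution/divisibility computation you flag as the ``main obstacle,'' carried out by tracking the $v$-powers from $\Lambda$ and verifying they aggregate into the quantum binomial coefficients $[\,\cdot\,]_{\delta_k}$. Your observation that quasi-commutation only rescales coefficients by powers of $v$ and hence leaves exponent vectors (and thus the Newton polytope analysis of Section \ref{sect 3+}) unchanged is exactly the reason the paper can dispense with repeating (SC'), (NC), (NC').
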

\begin{proof}
The proof is similar to Theorem \ref{main theorem}. The main difference is the quantum version of the divisibility condition (DC), which follows easily from Lemma \ref{lem:quantumDC} below.
\end{proof}
\begin{lemma}\label{lem:quantumDC}
Fix $1\le k\le 3$ and fix $p_k\in\mathbb{Z}$.
Let $f$ be a Laurent polynomial 
$$f=\sum_{p_1,\dots,\hat{p}_k,\dots, p_3} e(p_1,p_2,p_3)x_1^{p_1}x_2^{p_2}x_3^{p_3}$$
where $e(p_1,p_2,p_3)\in \mathbb{Z}[v^{\pm}][y_1,y_2,y_3]$.  Then $\varphi(f)\in \mathcal{T}(\Lambda)$ is a Laurent polynomial in 
$$\mathbb{Z}[v^{\pm}][X_1^{\pm},\dots,(X'_k)^{\pm},\dots,X_3^{\pm},X_4^{\pm},X_5^{\pm},X_6^{\pm}]$$ if and only if 
$$\big(\prod x_i^{[-b_{ik}]_+}+y_k\prod x_i^{[b_{ik}]_+}\big)^{[-p_k]_+}_{\delta_k}\textrm{ divides } f.$$

\end{lemma}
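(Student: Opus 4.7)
The plan is to reduce this lemma to the quasi-commutativity of the two monomials in the quantum exchange relation and the quantum binomial theorem for $v$-commuting variables. By the symmetry of the statement among $k \in \{1,2,3\}$ it suffices to treat $k = 1$; and the case $p_1 \geq 0$ is vacuous because then $[-p_1]_+ = 0$, making the quantum binomial $(\cdot)^0_{\delta_1} = 1$ divide everything, and simultaneously no mutation of a negative power of $x_1$ is required. So I will assume $p_1 = m < 0$ throughout.

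The first step is to factor $\varphi(f) = v^c \, X_1^m \cdot \Phi$, where $\Phi \in \mathcal{T}(\Lambda)$ is a Laurent polynomial in $X_2, X_3, Y_1, Y_2, Y_3$ only, and $c \in \mathbb{Z}$ is a single overall $v$-power determined by the commutations $X_1 Y_1 = v^{-2\delta_1} Y_1 X_1$ and $[X_1, X_j] = [X_1, Y_j] = 0$ for $j \neq 1,4$, all read off from the zero top-left block of $\Lambda = \begin{pmatrix} 0 & -D \\ D & -DB \end{pmatrix}$. Next I record the quantum exchange relation $X_1 X'_1 = M_- + M_+$, where $M_- := \varphi(\prod_i x_i^{[-b_{i1}]_+})$ and $M_+ := \varphi(y_1 \prod_i x_i^{[b_{i1}]_+})$. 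Using the Berenstein--Zelevinsky compatibility $\tilde B^{\,T} \Lambda = (D\ 0)$, a direct calculation of $\Lambda$ on the exponents of $M_-$ and $M_+$ yields the crucial quasi-commutation
\[
M_+ M_- \;=\; v^{-2\delta_1} M_- M_+.
\]
The standard quantum binomial theorem for $v^{2\delta_1}$-commuting variables then gives, for every $n \geq 0$,
\[
(M_- + M_+)^n \;=\; \varphi\Bigl(\bigl(\textstyle\prod_i x_i^{[-b_{i1}]_+} + y_1 \prod_i x_i^{[b_{i1}]_+}\bigr)^n_{\delta_1}\Bigr),
\]
by matching both sides term-by-term through $\varphi$ with the definition of $(\cdot + \cdot)^n_{\delta_1}$ given at the start of Section~\ref{sect 5}.

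Finally, substituting $X_1 = (M_- + M_+)(X'_1)^{-1}$ into $X_1^m$ and using that $(M_- + M_+)$ and $X'_1$ quasi-commute up to a known power of $v$, one obtains $X_1^m = v^{c'}\, (X'_1)^{-m}(M_- + M_+)^m$ in the ambient skew field. Therefore $\varphi(f) = v^{c+c'}\,(X'_1)^{-m}(M_- + M_+)^m\,\Phi$ lies in the mutated quantum torus $\mathbb{Z}[v^\pm][(X'_1)^\pm,X_2^\pm,X_3^\pm,Y_1^\pm,Y_2^\pm,Y_3^\pm]$ if and only if $(M_-+M_+)^{-m}$ right-divides $\Phi$ in $\mathcal{T}(\Lambda)$. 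Pulling this divisibility back through the $\mathbb{Z}[v^\pm]$-module isomorphism $\varphi^{-1}$ and invoking the identity from the preceding paragraph converts it exactly into the commutative quantum binomial divisibility asserted by the lemma. The main obstacle will be the careful bookkeeping of $v$-powers: verifying the key quasi-commutation $M_+M_- = v^{-2\delta_1} M_-M_+$ from the compatibility relation, and checking that all scalar $v$-factors absorbed along the way cancel correctly so that the noncommutative divisibility in $\mathcal{T}(\Lambda)$ really corresponds to the classical quantum binomial divisibility with matching $v$-exponents.
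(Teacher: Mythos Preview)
Your overall plan---reduce to $k=1$, factor out $X_1^{m}$, invoke the quantum exchange relation and the quantum binomial theorem---is the same circle of ideas as the paper's proof. But there is a concrete miscomputation that breaks the argument.

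You assert that the compatibility relation $\tilde B^{T}\Lambda=(D\ 0)$ yields $M_+M_-=v^{-2\delta_1}M_-M_+$. It does not. With $M_\pm=X^{[\pm\tilde B_1]_+}$ one has
\[
\Lambda\bigl([\tilde B_1]_+,[-\tilde B_1]_+\bigr)
=[\tilde B_1]_+^{T}\Lambda\,[-\tilde B_1]_+
=-[\tilde B_1]_+^{T}\Lambda\,\tilde B_1
=-[\tilde B_1]_+^{T}(-\delta_1e_1)=0,
\]
because the first coordinate of $[\tilde B_1]_+$ is $[b_{11}]_+=0$. Hence $M_+$ and $M_-$ actually \emph{commute}, and $(M_-+M_+)^{-m}$ expands with ordinary binomial coefficients, not the $\delta_1$-quantum ones required by the lemma. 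The $v^{-2\delta_1}$ quasi-commutation you need lives one level up: it holds between the two summands $X'^{-e_1+[-\tilde B_1]_+}$ and $X'^{-e_1+[\tilde B_1]_+}$ of $X_1$ written in the \emph{mutated} torus, where the $-e_1$ contribution (that is, the factor $(X'_1)^{-1}$) is essential. For the same reason your factorisation $X_1^{m}=v^{c'}(X'_1)^{-m}(M_-+M_+)^{m}$ cannot hold as written: separating $(X'_1)^{-m}$ from the product introduces exactly the $v$-powers that turn the classical binomial into the $\delta_1$-quantum one.

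The paper avoids this trap by never trying to split off $(X'_1)^{-1}$. It starts from the mutated expansion $\sum e'_{\mathbf p'\mathbf q'}X'^{[\mathbf p'|\mathbf q']}$, multiplies by $X_1^{-p_1}=\bigl(X'^{-e_1+[-\tilde B_1]_+}+X'^{-e_1+[\tilde B_1]_+}\bigr)^{-p_1}$, applies the quantum binomial theorem with the \emph{correct} quasi-commutation factor $v^{-2\delta_1}$ between these two full monomials, and then checks (this is the substantive ``bookkeeping'') that the resulting $v$-exponents on both sides agree, using $\Lambda\bigl({\bar{\mathbf p}\brack\mathbf q},\tilde B_1\bigr)=0$. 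This yields the clean coefficient relation
\[
e_{\mathbf p\mathbf q}=\sum_k e'_{\mathbf p'\mathbf q'}{-p_1\brack k}_{\delta_1},
\]
from which the commutative divisibility by the $\delta_1$-quantum binomial drops out without ever needing to translate a noncommutative divisibility statement through $\varphi$. If you want to salvage your approach, work with the two summands of $X_1$ in the mutated torus rather than with $M_\pm$; the quantum binomial theorem then applies with the right $\delta_1$, but you will still have to do the $v$-power cancellation that the paper carries out explicitly.
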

\begin{proof}
Without loss of generality assume $k=1$. So $p_1$ is fixed throughout the proof.

Let $\tilde{B}'=\mu_1(\tilde B)$. By definition of mutation of quantum cluster variables, 
$$X_1={X'}^{-e_1+[\tilde B_1']_+}+{X'}^{-e_1+[-\tilde B_1']_+}={X'}^{-e_1+[-\tilde B_1]_+}+{X'}^{-e_1+[\tilde B_1]_+},$$
where the second equality holds because $\tilde B_1'=-\tilde B_1$. 
We introduce the following notation: for $p,q\in\mathbb{Z}^3$, let
$$X^{{\bf p}\brack{\bf q}}=X^{(p_1,p_2,p_3,q_1,q_2,q_3)}, 
\quad
{X'}^{{\bf p}\brack{\bf q}}={X'}^{(p_1,p_2,p_3,q_1,q_2,q_3)}$$
We denote the Laurent expansion of $\varphi(f)$ in the original cluster (respectively in the cluster $\{X_1',X_2,X_3,X_4,X_5,X_6\}$) as follows
 $$\varphi(f)
=\sum_{p_2, p_3,{\bf q}} e_{\bf pq}
X^{{\bf p}\brack{\bf q}}
\qquad(\textup{respectively }\varphi(f)
=\sum_{p'_2, p'_3,{\bf q'}} e'_{\bf p'q'}
{X'}^{{\bf p'}\brack{\bf q'}}),
$$
 where ${\bf p}=(p_1,p_2,p_3)$ and ${\bf p'}=(p_1',p'_2,p'_3)$ and $p'_1=-p_1$.
So $e(p_1,p_2,p_3)=\sum_{\bf q} e_{\bf pq}X^{{\bf p}\brack{\bf q}}$.
For convenience of notation, we let $\bar{\bf p}=(0,p_2,p_3)$, and similar for $\bar{\bf p'}$. 
Then for fixed $p_1$, 
$$\sum_{\bf p,q} e_{\bf pq}v^{-\Lambda({{\bar{\bf p}}\brack{\bf q}},\, p_1e_1)}X^{{\bar{\bf p}}\brack{\bf q}}X_1^{p_1}=
 \sum_{\bf p',q'} e'_{\bf p'q'}{X'}^{{\bf p'}\brack{\bf q'}}$$

\medskip
Now we prove the lemma in two cases: $p_1<0$ and $p_1\ge0$.
\medskip

\noindent (Case 1) $p_1<0$. We shall show that
\begin{equation}\label{epq case1}
e_{\bf pq}=\sum_k
e'_{\bf p'q'}{-p_1\brack k}_{\delta_1}
\end{equation}
Indeed,
$$\aligned
&{\sum_{\bf p,q} e_{\bf pq}v^{-\Lambda({{\bar{\bf p}}\brack{\bf q}},\, p_1e_1)}X^{{\bar{\bf p}}\brack{\bf q}}=
 \sum_{\bf p',q'} e'_{\bf p'q'}{X'}^{{\bf p'}\brack{\bf q'}}X_1^{-p_1}}\\
&{=
 \sum_{\bf p',q'} e'_{\bf p'q'}{X'}^{{\bf p'}\brack{\bf q'}}
({X'}^{-e_1+[-\tilde B_1]_+}+{X'}^{-e_1+[\tilde B_1]_+})^{-p_1}}\\
&{\stackrel{*}{=}
 \sum_{\bf p',q'} e'_{\bf p'q'}{X'}^{{\bf p'}\brack{\bf q'}}
\sum_k {-p_1\brack k}_{\delta_1}
{X'}^{p_1e_1+k[-\tilde B_1]_+ + (-p_1-k)[\tilde B_1]_+}}\\
&{=\hspace{-5pt}
 \sum_{{\bf p',q'},k} e'_{\bf p'q'}{-p_1\brack k}_{\delta_1}
\hspace{-5pt}v^{\Lambda'({\bar{\bf p}\brack {\bf q}},p_1e_1+k[-\tilde B_1]_+ + (-p_1-k)[\tilde B_1]_+)}
{X'}^{{{\bf p'}\brack{\bf q'}}+p_1e_1+k[-\tilde B_1]_+ + (-p_1-k)[\tilde B_1]_+}}\\
&{\stackrel{**}{=}
 \sum_{{\bf p',q'},k} e'_{\bf p'q'}{-p_1\brack k}_{\delta_1}
v^{\Lambda'({\bar{\bf p}\brack {\bf q}},p_1e_1+k[-\tilde B_1]_+ + (-p_1-k)[\tilde B_1]_+)}
{X}^{{{\bar{\bf p'}}\brack{\bf q'}}+k[-\tilde B_1]_+ + (-p_1-k)[\tilde B_1]_+}}\\
\endaligned$$
(The equality  ``$\stackrel{*}{=}$'' is because
$${X'}^{-e_1+[-\tilde B_1]_+}{X'}^{-e_1+[\tilde B_1]_+}=v^{2\Lambda'(-e_1+[-\tilde B_1]_+,-e_1+[\tilde B_1]_+)}{X'}^{-e_1+[\tilde B_1]_+}{X'}^{-e_1+[-\tilde B_1]_+}$$
where $\Lambda'(-e_1+[-\tilde B_1]_+,-e_1+[\tilde B_1]_+)=\Lambda'(-e_1+[-\tilde B_1]_+,\tilde B_1)=\Lambda(e_1,\tilde B_1)=-\delta_1$. The equality ``$\stackrel{**}{=}$'' is because the exponent of $X'$ has zero in the first coordinate, so we can replace $X'$ by $X$). 
Now comparing the coefficients of $X^{{\bf p}\brack{\bf q}}$ on both sides, we get
\begin{equation}\label{eq:epq}
e_{\bf pq}v^{-\Lambda({{\bar{\bf p}}\brack{\bf q}},\, p_1e_1)}
=\sum_k
e'_{\bf p'q'}{-p_1\brack k}_{\delta_1}
v^{\Lambda'({\bar{\bf p}\brack {\bf q}},p_1e_1+k[-\tilde B_1]_+ + (-p_1-k)[\tilde B_1]_+)}
\end{equation}
where ${\bf p'}$ and ${\bf q'}$ are determined by
$${{\bar{\bf p'}}\brack{\bf q'}}+k[-\tilde B_1]_+ + (-p_1-k)[\tilde B_1]_+={{\bar{\bf p}}\brack{\bf q}}$$
which can be rewritten as
\begin{equation}\label{eq:relation pq p'q'}
{{\bar{\bf p'}}\brack{\bf q'}}+ (-p_1)[-\tilde B_1]_+(-p_1-k)\tilde{B}_1={{\bar{\bf p}}\brack{\bf q}}
\end{equation} 
We claim that the exponents of $v$ on both sides of \eqref{eq:epq} are equal. Indeed,
$$
\aligned
&{\Lambda'({\bar{\bf p}\brack {\bf q}},p_1e_1+k[-\tilde B_1]_+ + (-p_1-k)[\tilde B_1]_+)}\\
&={\Lambda({\bar{\bf p}\brack {\bf q}},-p_1e_1+p_1[\tilde B_1]_+ +k[-\tilde B_1]_+ + (-p_1-k)[\tilde B_1]_+)}\\
&={\Lambda({\bar{\bf p}\brack {\bf q}},-p_1e_1 +k[-\tilde B_1]_+  -k[\tilde B_1]_+)}
=-{\Lambda({\bar{\bf p}\brack {\bf q}},p_1e_1)}-k{\Lambda({\bar{\bf p}\brack {\bf q}},\tilde B_1)}
\\
\endaligned
$$
Moreover, $\Lambda({\bar{\bf p}\brack {\bf q}},\tilde B_1)$ is the 1st coordinate of the following row vector, so is equal to 0:
$$\aligned
{\bar{\bf p}\brack {\bf q}}^T\Lambda \tilde B
&={\bar{\bf p}\brack {\bf q}}^T\begin{bmatrix}0&-D\\D&-DB\end{bmatrix} \begin{bmatrix}B\\I\end{bmatrix}
={\bar{\bf p}\brack {\bf q}}^T\begin{bmatrix}-D\\0\end{bmatrix}=-\bar{\bf p}^TD\\
&=-\begin{bmatrix}0&p_2&p_3\end{bmatrix}\begin{bmatrix}\delta_1&0&0\\0&\delta_2&0\\0&0&\delta_3\end{bmatrix}=-\begin{bmatrix}0&p_2\delta_2&p_3\delta_3\end{bmatrix}
\endaligned
$$
Thus we can cancel out the exponents of $v$ on both sides of \eqref{eq:epq}, and obtain \eqref{epq case1}.

The lemma then follows easily from  \eqref{epq case1}:
$$
\aligned
f&=\sum_{\bf p,q} e_{\bf pq}x^{{\bf p}\brack{\bf q}}
=\sum_{{\bf p},{\bf q},k}
e'_{\bf p'q'}{-p_1\brack k}_{\delta_1}x^{{\bf p}\brack{\bf q}} \quad (\textrm{which satisfies \eqref{eq:relation pq p'q'}})\\
&=\sum_{{\bf p}',{\bf q}',k}
e'_{\bf p'q'}{-p_1\brack k}_{\delta_1}x^{{{\bar{\bf p'}}\brack{\bf q'}}+p_1e_1+ (-p_1)[-\tilde B_1]_+(-p_1-k)\tilde{B}_1}\\
&
=\sum_{{\bf p}',{\bf q}'}
e'_{\bf p'q'}x^{{{\bar{\bf p'}}\brack{\bf q'}}+p_1e_1+ (-p_1)[-\tilde B_1]_+}
\sum_k{-p_1\brack k}_{\delta_1}x^{(-p_1-k)\tilde{B}_1}\\
&=\sum_{{\bf p}',{\bf q}'}
e'_{\bf p'q'}x^{{{\bar{\bf p'}}\brack{\bf q'}}+p_1e_1+ (-p_1)[\tilde B_1]_+}
\big(1+x^{\tilde{B}_1}\big)^{-p_1}_{\delta_1}\\
&=\sum_{{\bf p}',{\bf q}'}
e'_{\bf p'q'}x^{{{\bar{\bf p'}}\brack{\bf q'}}+p_1e_1+ (-p_1)[\tilde B_1]_+}
\prod x_i^{-[-b_{i1}]_+}\big(\prod x_i^{[-b_{i1}]_+}+y_1\prod x_i^{[b_{i1}]_+}\big)^{-p_1}_{\delta_1}\\
\endaligned
$$ 
therefore $f$ is divisible by $\big(\prod x_i^{[-b_{i1}]_+}+y_1\prod x_i^{[b_{i1}]_+}\big)^{-p_1}_{\delta_1}$ if and only if finitely many $e'_{\bf p'q'}$ are nonzero.
\medskip

\noindent (Case 2) $p_1\ge0$. Similar to above, we have
$$e'_{\bf p'q'}=\sum_k
e_{\bf pq}{p_1\brack k}_{\delta_1}
$$
So it is always true that only finitely many $e'_{\bf p'q'}$ are nonzero. Meanwhile, the divisibility condition becomes ``1 divides $f$'' which is also always true.
\end{proof}

\begin{example}
The coefficients of the quantum cluster variable $x[6,2,1]$ corresponding to Example \ref{eg:x621} are shown in the following matrix, where $[n]:=[n]_1$:
$$\begin{bmatrix}
&&&&&&1\\
&&&&&[6]\\
&&&&\frac{[6][5]}{[2]}\\
&&&\frac{[6][5][4]}{[3][2]}&[2]\\
&&\frac{[6][5]}{[2]}&[2][4]\\
&[6]&
[3][4]
\\
1&
[2][4]&1\\
[2]&[2]
\\
1&1
\end{bmatrix}$$
\end{example}

\medskip
\noindent\textbf{Acknowledgment.} 
The authors wish to thank the referee for providing many valuable suggestions, particularly
for suggesting of a reference of Lemma \ref{C-matrix det}.

\nocite{*}
\bibliographystyle{amsplain-ac}
\bibliography{LLR}

\providecommand{\bysame}{\leavevmode\hbox to3em{\hrulefill}\thinspace}
\providecommand{\MR}{\relax\ifhmode\unskip\space\fi MR }
\providecommand{\MRhref}[2]{%
  \href{http://www.ams.org/mathscinet-getitem?mr=#1}{#2}
}
\providecommand{\href}[2]{#2}
\begin{thebibliography}{10}

\bibitem{ClusteralgebraIII}
Arkady Berenstein, Sergey Fomin, and Andrei Zelevinsky, \emph{Cluster algebras.
  {III}. {U}pper bounds and double {B}ruhat cells}, Duke Math. J. \textbf{126}
  (2005), no.~1, 1--52, \url{https://doi.org/10.1215/S0012-7094-04-12611-9}.

\bibitem{BZ}
Arkady Berenstein and Andrei Zelevinsky, \emph{Quantum cluster algebras}, Adv.
  Math. \textbf{195} (2005), no.~2, 405--455,
  \url{https://doi.org/10.1016/j.aim.2004.08.003}.

\bibitem{CL}
Peigen Cao and Fang Li, \emph{Positivity of denominator vectors of
  skew-symmetric cluster algebras}, J. Algebra \textbf{515} (2018), 448--455,
  \url{https://doi.org/10.1016/j.jalgebra.2018.09.004}.

\bibitem{CKLP}
Giovanni Cerulli~Irelli, Bernhard Keller, Daniel Labardini-Fragoso, and
  Pierre-Guy Plamondon, \emph{Linear independence of cluster monomials for
  skew-symmetric cluster algebras}, Compos. Math. \textbf{149} (2013), no.~10,
  1753--1764, \url{https://doi.org/10.1112/S0010437X1300732X}.

\bibitem{CGMMRSW}
Man~Wai Cheung, Mark Gross, Greg Muller, Gregg Musiker, Dylan Rupel, Salvatore
  Stella, and Harold Williams, \emph{The greedy basis equals the theta basis: a
  rank two haiku}, J. Combin. Theory Ser. A \textbf{145} (2017), 150--171,
  \url{https://doi.org/10.1016/j.jcta.2016.08.004}.

\bibitem{DWZ2}
Harm Derksen, Jerzy Weyman, and Andrei Zelevinsky, \emph{Quivers with
  potentials and their representations {II}: applications to cluster algebras},
  J. Amer. Math. Soc. \textbf{23} (2010), no.~3, 749--790,
  \url{https://doi.org/10.1090/S0894-0347-10-00662-4}.

\bibitem{Fei}
Jiarui Fei, \emph{Combinatorics of ${F}$-polynomials}, 2019,
  \url{https://arxiv.org/abs/1909.10151}.

\bibitem{fz-ClusterI}
Sergey Fomin and Andrei Zelevinsky, \emph{Cluster algebras. {I}.
  {F}oundations}, J. Amer. Math. Soc. \textbf{15} (2002), no.~2, 497--529,
  \url{https://doi.org/10.1090/S0894-0347-01-00385-X}.

\bibitem{ClusteralgebraIV}
\bysame, \emph{Cluster algebras. {IV}. {C}oefficients}, Compos. Math.
  \textbf{143} (2007), no.~1, 112--164,
  \url{https://doi.org/10.1112/S0010437X06002521}.

\bibitem{GHKK}
Mark Gross, Paul Hacking, Sean Keel, and Maxim Kontsevich, \emph{Canonical
  bases for cluster algebras}, J. Amer. Math. Soc. \textbf{31} (2018), no.~2,
  497--608, \url{https://doi.org/10.1090/jams/890}.

\bibitem{Gupta}
Meghal Gupta, \emph{A formula for ${F}$-polynomials in terms of ${C}$-vectors
  and stabilization of ${F}$-polynomials}, 2018,
  \url{https://arxiv.org/abs/1812.01910}.

\bibitem{LLRZ}
Kyungyong Lee, Li~Li, Dylan Rupel, and Andrei Zelevinsky, \emph{The existence
  of greedy bases in rank 2 quantum cluster algebras}, Adv. Math. \textbf{300}
  (2016), 360--389, \url{https://doi.org/10.1016/j.aim.2016.03.022}.

\bibitem{LLZ}
Kyungyong Lee, Li~Li, and Andrei Zelevinsky, \emph{Greedy elements in rank 2
  cluster algebras}, Selecta Math. (N.S.) \textbf{20} (2014), no.~1, 57--82,
  \url{https://doi.org/10.1007/s00029-012-0115-1}.

\bibitem{LS-rank2}
Kyungyong Lee and Ralf Schiffler, \emph{A combinatorial formula for rank 2
  cluster variables}, J. Algebraic Combin. \textbf{37} (2013), no.~1, 67--85,
  \url{https://doi.org/10.1007/s10801-012-0359-z}.

\bibitem{LS}
\bysame, \emph{Positivity for cluster algebras}, Ann. of Math. (2) \textbf{182}
  (2015), no.~1, 73--125, \url{https://doi.org/10.4007/annals.2015.182.1.2}.

\bibitem{MSW}
Gregg Musiker, Ralf Schiffler, and Lauren Williams, \emph{Positivity for
  cluster algebras from surfaces}, Adv. Math. \textbf{227} (2011), no.~6,
  2241--2308, \url{https://doi.org/10.1016/j.aim.2011.04.018}.

\bibitem{NZ}
Tomoki Nakanishi and Andrei Zelevinsky, \emph{On tropical dualities in cluster
  algebras}, in Algebraic groups and quantum groups, Contemp. Math., vol. 565,
  Amer. Math. Soc., Providence, RI, 2012, pp.~217--226,
  \url{https://doi.org/10.1090/conm/565/11159}.

\bibitem{P}
Pierre-Guy Plamondon, \emph{Cluster characters for cluster categories with
  infinite-dimensional morphism spaces}, Adv. Math. \textbf{227} (2011), no.~1,
  1--39, \url{https://doi.org/10.1016/j.aim.2010.12.010}.

\end{thebibliography}
\end{document}